\newcounter{numberofremark}
\newcommand\nothing[1]{}
\newcommand{\dcl}{\DeclareMathOperator}
\dcl\cdet{cdet} \dcl\Sp{Specm} \dcl\depth{depth} \dcl\im{Im} \dcl\Span{span} \dcl\Ker{Ker} \dcl\Specm{Specm}
\dcl\Supp{Supp} \dcl\codim{codim} \dcl\Y{Y} \dcl\gl{\mathfrak{gl}}    \dcl\U{U} \dcl\T{T}
\dcl\qdet{qdet} \dcl\sgn{sgn} \dcl\gr{gr} \dcl\diag{diag}
\dcl\g{\mathfrak{g}} \dcl\C{\mathbb C} \dcl\dd{{\mathrm d}}
\dcl\Ind{Ind}\dcl\Hom{Hom}
\newcommand\La{{\Lambda}}
\newcommand\la{{\lambda}}
\newcommand\bm{{\mathbf m}}
\newlength\yStones
\newlength\xStones
\newlength\xxStones
\def\Stones{\pst@object{Stones}}
\def\Stones@i#1{%
  \pst@killglue%
  \begingroup%
  \use@par%
  \setlength\xxStones{\xStones}%
  \expandafter\Stones@ii#1,,\@nil
  \endgroup
  \global\addtolength\xStones{0.6cm}%
  \global\addtolength\yStones{-7.5mm}}%
\def\Stones@ii#1,#2,#3\@nil{%
  \rput(\xxStones,\yStones){%
    \psframebox[framesep=0]{%
      \parbox[c][6mm][c]{11mm}{\makebox[11mm]{$#1$}}}}%
  \addtolength\xxStones{1.2cm}%
  \ifx\relax#2\relax\else\Stones@ii#2,#3\@nil\fi}
\def\Stone#1{\fbox{\makebox[10mm]{\strut#1}}\kern2pt}
\newtheorem{theorem}{Theorem}[section]
\newtheorem{lemma}[theorem]{Lemma}
\newtheorem{corollary}[theorem]{Corollary}
\newtheorem{proposition}[theorem]{Proposition}
\newtheorem{example}[theorem]{Example}
\newtheorem{remark}[theorem]{Remark}
\newtheorem{definition}[theorem]{Definition}
\begin{document}
\title{Gelfand-Tsetlin modules for $\mathfrak{gl}(m|n)$}
\author{Vyacheslav Futorny}
\address{Instituto de Matem\'atica e Estat\'istica, Universidade de S\~ao Paulo,  S\~ao Paulo, Brazil, and  International Center for Mathematics, SUSTech, Shenzhen, China}\email{futorny@ime.usp.br}
\author{Vera Serganova}
\address{Berkeley}\email{serganov@math.berkeley.edu }
\author{Jian Zhang}
\address{Institute of Mathematics, Academia Sinica, Taipei, Taiwan 10617} \email{jzhang@gate.sinica.edu.tw}

\begin{abstract}
We address the  problem of classifying irreducible Gelfand-Tsetlin modules for $\gl(m|n)$ and show that it reduces to the 
classification of Gelfand-Tsetlin modules for the even part. We also give an explicit tableaux construction and the irreducibility criterion for the class of quasi typical and quasi covariant Gelfand-Tsetlin modules 
which includes all essentially typical and covariant tensor  finite dimensional modules. In the quasi typical case 
new irreducible representations  are infinite dimensional
$\gl(m|n)$-modules which are isomorphic to the parabolically induced (Kac) modules.

\end{abstract}

\maketitle
\section{Introduction}
Throughout the paper the field is assumed to be the field of complex numbers. Denote by 
Let $\g$ be the Lie superalgebra  $\gl(m|n)$ which 
 consists of  $(m+n)\times (m+n) $ complex matrices of the block form
\begin{equation}
g=\left(\begin{array}{ll}
A & B\\
C & D
\end{array}\right)
\end{equation}
where $A,B,C$ and $D $  are respectively ${m\times m}, {m\times n}, {n\times m}$ and ${n\times n}$ matrices.
The even subalgebra $(\g_{m,n})_{\bar{0}}\simeq \gl(m)\oplus \gl(n)$ consists matrices with $B=0$ and $C=0$; the odd subspace $(\g_{m,n})_{\bar{1}}$ consists those with $A=0$ and $D=0$. The Lie superalgebra is then defined by the bracket $[x,y]=xy-(-1)^{|x||y|}yx$, where $x$ and $y$ are homogeneous elements.

The Cartan subalgebra  $\mathfrak h$ of $\mathfrak{gl}(m|n)$ is the subalgebra of diagonal matrices.  We denote by $U(\g)$  the universal enveloping algebra 
of $\g$.  

A chain of subalgebras of $\g$, 
$$\g=\g^{1}\supset \g^{2}\supset \ldots \supset \g^{m+n},$$
such that  $\g^k$ is isomorphic to $\gl(p|q)$ with $p+q=m+n-k+1 $ is called a \emph{complete flag} in $\g$  if 
$\mathfrak h^k:=\mathfrak h\cap\g^k$ is a Cartan subalgebra of $\g^k$. A complete flag  induces a chain of Cartan subalgebras 
$$\mathfrak h=\mathfrak h^{1}\supset \mathfrak h^{2}\supset \ldots \supset \mathfrak h^{m+n}.$$

Every complete flag $\mathcal C$ in $\g$ defines the commutative subalgebra $\Gamma_{\mathcal C}$ in $U(\g)$ generated by the centers of the members of the chain. We will say that $\Gamma_{\mathcal C}$ is the \emph{Gelfand-Tsetlin subalgebra} of $U(\g)$
associated with the flag $\mathcal C$.

Let  $\Gamma=\Gamma_{\mathcal C}$ be a Gelfand-Tsetlin subalgebra of $U(\g)$. 
A finitely generated module $M$ over $\g$ is called a {\em
Gelfand-Tsetlin module\/} (with respect to $\Gamma$) if \begin{equation*}
M=\underset{{\bm} \in \Specm {\Gamma}}{\bigoplus}M({\bm}) \end{equation*} as
a $\Gamma$-module, where \begin{equation*} M({\bm}) \ = \ \{ x\in M\ | \ {\bm}^k x
=0\quad \text{for some}\quad k\geqslant 0\} \end{equation*}
 and $\Specm \Gamma$
denotes the set of maximal ideals of $\Gamma$.  Identifying maximal ideals of $\bm\in \Specm \Gamma$ with kernels of
characters $\chi\in \hat{\Gamma}$,  Gelfand-Tsetlin module $M$  (with respect to $\Gamma$) will have the following form:
\begin{equation*}
M=\underset{\chi\in \hat{\Gamma}}{\bigoplus}M({\chi}), \end{equation*} where \begin{equation*} M({\chi}) \ = \ \{ x\in M\ | \  (z-\chi(z))^k x
=0\quad \text{for some}\quad k\geqslant 0  \quad \text{and all} \quad  z\in \Gamma   \}. \end{equation*}

Recall that $\g=\gl(m|n)$ has a consistent $\mathbb Z$-grading $\g=\g_{-1}\oplus\g_0\oplus\g_1$.
There is a bijection between irreducible $\g$-supermodules and irreducible $\g_0$-supermodules \cite{CM}.
The purpose of our paper is to address the classification problem of irreducible Gelfand-Tsetlin modules. We show that this problem reduces to the 
classification of Gelfand-Tsetlin modules for the even part of $\g$ with respect to the Gelfand-Tsetlin subalgebra $\phi(\Gamma)$, where
 $\phi: U(\g)\to U(\g_0)$ is the projection with the kernel $\g_{-1}U(\g)+U(\g)\g_1$. 
 Our first result is summarized in the following theorem

\begin{theorem}\label{theorem-main}
Let $\Gamma$ be a Gelfand-Tsetlin subalgebra of $U(\g)$,
$\Gamma_0=\phi(\Gamma)$.  
\begin{itemize} 
\item[(i)] A $\g$-module $M$ is locally finite over $\Gamma$ if and only if it is locally finite over $\Gamma_0$.

\item[(ii)]  Let  $M$ be an irreducible  Gelfand-Tsetlin (with respect to $\Gamma$) $\g$-module.
Then
$M$ is a unique simple quotient of the induced (Kac) module $$K(N):=U(\g)\otimes_{U(\g_0\oplus\g_1)}N$$ 
for some irreducible Gelfand-Tsetlin (with respect to $\Gamma_0$)  $\g_0$-module   $N$ (we assume $\g_1N=0$).

\item[(iii)] For any $\chi_0\in \hat{\Gamma}_0$ there exists only finitely many
non-isomorphic irreducible Gelfand-Tsetlin (with respect to
$\Gamma$) $\g$-modules $M$ with $M(\chi_0)\neq 0$.

\item[(iv)] If $\Gamma'$ is another Gelfand-Tsetlin subalgebra of $\g$
such that $\phi(\Gamma')=\Gamma_0$ then the categories of
Gelfand-Tsetlin  $\g$ -modules for $\Gamma$ and $\Gamma'$
coincide.

\item[(v)] The subcategory of Gelfand-Tsetlin (with respect to $\Gamma$) $\g$-modules with a fixed typical central character is
equivalent to  the subcategory of Gelfand-Tsetlin (with respect to $\Gamma_0$) $\g_0$-modules with a suitable central
character.
\end{itemize}
\end{theorem}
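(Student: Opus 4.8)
The plan is to establish the five statements essentially in order, with (i) and (ii) doing most of the structural work. For (i), the key observation is that $\g$ is a free module of finite rank over $U(\g_0)$ on both sides, via the PBW decomposition $U(\g) \cong \Lambda(\g_{-1}) \otimes U(\g_0) \otimes \Lambda(\g_1)$. Given a $\g$-module $M$ that is locally finite over $\Gamma_0$, I would pick $x \in M$, note that $U(\g_0)x$ lies in a finite-dimensional sum of generalized weight spaces for $\Gamma_0$, and then show $\Gamma x \subseteq U(\g)x = \Lambda(\g_{-1})\Lambda(\g_1) \cdot U(\g_0) x$ is finite-dimensional. The crucial point is that $\Gamma$ and $\Gamma_0$ generate the same image after applying $\phi$, so modulo $\g_{-1}U(\g) + U(\g)\g_1$ the action of $\Gamma$ is controlled by $\Gamma_0$; one then argues by induction on the $\bZ$-grading filtration that the generalized eigenspace decomposition lifts. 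The converse direction — local finiteness over $\Gamma$ implies local finiteness over $\Gamma_0$ — should follow from the fact that $\phi(\Gamma) = \Gamma_0$ together with surjectivity of $\phi$, so any generalized $\Gamma$-eigenvector maps to (or its image controls) a generalized $\Gamma_0$-eigenvector in an associated graded piece.

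For (ii), I would use the standard fact cited as \cite{CM}: every irreducible $\g$-module is the unique simple quotient of a Kac module $K(N) = U(\g) \otimes_{U(\g_0 \oplus \g_1)} N$ for a unique irreducible $\g_0$-module $N$ with $\g_1 N = 0$. The content to add here is that when $M$ is Gelfand-Tsetlin with respect to $\Gamma$, the module $N$ (which is the subspace of $\g_1$-invariants of $M$, or equivalently the top graded piece) is Gelfand-Tsetlin with respect to $\Gamma_0$. This follows because $N$ can be identified with a $\Gamma_0 = \phi(\Gamma)$-submodule of the $\Gamma$-module $M$ on which $\g_{-1}U(\g) + U(\g)\g_1$ acts compatibly with $\phi$; local finiteness of $M$ over $\Gamma$ plus part (i) gives local finiteness of $N$ over $\Gamma_0$, and finite generation of $N$ follows from that of $M$ since $K(N)$ is generated by $N$.

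For (iii), I would combine (ii) with a finiteness count: the irreducible $M$ with $M(\chi_0) \neq 0$ correspond to irreducible Gelfand-Tsetlin $\g_0$-modules $N$ whose support meets a bounded set of characters determined by $\chi_0$ and the finite $\bZ$-grading; since $\g_{-1}$ is finite-dimensional, shifting $\chi_0$ by weights of $\g_{-1}$ produces only finitely many candidate characters for $N$, and by the even-part theory (or by the Harish-Chandra-type finiteness built into the Gelfand-Tsetlin formalism) only finitely many non-isomorphic irreducible $N$ share a given character. Part (iv) is then almost formal: if $\phi(\Gamma') = \Gamma_0 = \phi(\Gamma)$, then by (i) the classes of locally finite (hence Gelfand-Tsetlin, given finite generation) modules for $\Gamma$ and $\Gamma'$ both coincide with that for $\Gamma_0$, and the decomposition into generalized eigenspaces is intrinsic once one knows it is equivalent to the $\Gamma_0$-decomposition. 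Finally, for (v), the typicality hypothesis makes the Kac module $K(N)$ irreducible (a classical theorem of Kac), so $M \mapsto N$ and $N \mapsto K(N)$ are mutually inverse exact functors between the two subcategories; one checks these respect the Gelfand-Tsetlin structure using (i) and the compatibility of central characters under $\phi$.

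The main obstacle I anticipate is part (i), specifically the direction showing local finiteness over $\Gamma_0$ forces it over $\Gamma$: one must control how elements of $\Gamma \subset U(\g)$ act on $U(\g_0)x$ after multiplying by the odd generators in $\Lambda(\g_{-1}) \otimes \Lambda(\g_1)$, and verify that the generalized-eigenspace filtration is preserved. This requires knowing that $\Gamma$ maps into $\Gamma_0 \otimes (\text{nilpotent part})$ in a suitable PBW-adapted grading, i.e.\ that the "off-diagonal" corrections coming from $\g_{\pm 1}$ are nilpotent on generalized eigenspaces — a statement that is plausible from the consistent $\bZ$-grading but needs a careful filtration argument rather than a one-line invocation.
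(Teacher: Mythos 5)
Your proposal for parts (ii)--(v) is in the same spirit as the paper's: (ii) is exactly the statement that $\mathbf R(M)=M^{\g_1}$ is an irreducible Gelfand-Tsetlin $\g_0$-module and $M$ is the cosocle of $K(\mathbf R(M))$; (iii) comes from the finiteness theorem for $\g_0$ (from [O]) together with the observation that $K(N)\cong N\otimes\Lambda(\g_{-1})$ only shifts $\Gamma_0$-characters through a finite set; (iv) is a formal consequence of (i); and (v) is the Penkov-Serganova typical equivalence restricted to Gelfand-Tsetlin subcategories. All of that is fine but, as you note, it rests entirely on (i), and there your argument has a genuine gap.

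The specific step that fails is the claim that local $\Gamma_0$-finiteness of $M$ lets you bound $\Gamma x$ by inspecting $U(\g)x=\Lambda(\g_{-1})\Lambda(\g_1)\cdot U(\g_0)x$. This is not finite-dimensional and does not lie in a finite sum of generalized $\Gamma_0$-eigenspaces: $U(\g_0)x$ already has infinite $\Gamma_0$-support for any interesting Gelfand-Tsetlin module (e.g.\ for a $\gl(n)$ Verma module the support is a full lattice coset). So the inclusion $\Gamma x\subset U(\g)x$ gives you nothing. The paper's mechanism is different and rests on two ingredients you do not mention. First, for $v\in M^{\g_1}$ and $\gamma\in\Gamma\subset U(\g)^d$ one has $\gamma v=\phi(\gamma)v$ exactly (the off-diagonal PBW components annihilate $v$ because the $\Lambda(\g_1)$-degree must match the $\Lambda(\g_{-1})$-degree inside $U(\g)^d$, so any nonscalar $\Lambda(\g_1)$-part kills $v$); this is the clean replacement for your vague ``argue by induction on the $\bZ$-grading filtration.'' Second, the paper introduces the finite ascending filtration $F^i(M)$ with $F^i(M)/F^{i-1}M=U(\g)\bigl((M/F^{i-1}M)^{\g_1}\bigr)$, which exhausts $M$ in at most $2^{mn}$ steps because $\dim U(\g_1)v\leq 2^{mn}$. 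On each graded piece, the layer is a $\g_0$-quotient of $\Lambda(\g_{-1})\otimes(M/F^{i-1}M)^{\g_1}$, and one needs the nontrivial fact (the paper's Theorem~\ref{g0-GT-finite-length}, proved via the Bernstein-Gelfand/Kostant projective functor theorem) that tensoring a Gelfand-Tsetlin $\g_0$-module by a finite-dimensional $\g_0$-module stays in $\mathcal M(\Gamma_0)$. The direction $\Gamma_0\Rightarrow\Gamma$ is then handled by an analogous lemma (the paper's Lemma~\ref{lem-center0-center}) applied to each $\g_{k,l}$ in the chain, again via the $F^i$-filtration and the identity $zv=\phi(z)v$ on $\g_1$-invariants. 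Without the $\g_1$-invariants filtration and the projective-functor input, the proof of (i) does not close, and your other parts inherit the gap.
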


A classical paper of  Gelfand  and  Tsetlin \cite{GT} gives a construction of simple finite dimensional modules for $\g_0$ with explicit basis consisting of Gelfand-Tsetlin tableaux and the action of the Lie algebra. Infinite dimensional Gelfand-Tsetlin modules were studied extensively and many remarkable connections and applications were discovered (cf. \cite{DFO}, \cite{EMV}, \cite{FGRZ}, \cite{FO}, \cite{H}, \cite{KW1},  \cite{KW1}, \cite{KTWWY},   \cite{O}, \cite{W} and references therein).

In \cite{FRZ} a new combinatorial method of constructing of Gelfand-Tsetlin modules was developed. It is based on  the work of  Gelfand and Graev \cite{GG} and of  Lemire and Patera \cite{LP} who
 initiated a  study of formal continuations
of both the labelling tableaux and the classical Gelfand-Tsetlin formulas. 
Imposing certain modified conditions  on the entries of a  tableau  new infinite dimensional
irreducible modules can be constructed explicitly 
with the Lie algebra action given by the classical Gelfand-Tsetlin formulas.

Analogs of the Gelfand--Tsetlin bases for a certain class of finte dimensional \emph{essentially typical} representations of the Lie superalgebra $\gl(m|n)$ were constructed in \cite{Palev1989a}, 
\cite{Palev1989b}.   
Essentially typical representations is a class of typical representations that have consecutive restrictions   to the subalgebras of the chain 
$\gl(m|1)\subset \ldots \subset \gl(m|n)$  completely reducible.
 This allows  to apply the classical  Gelfand-Tsetlin formulas   to obtain explicit construction of all essentially typical representations. A basis is given by special Gelfand-Tsetlin tableaux.

Also alternative explicit construction of finite dimensional irreducible \emph{covariant tensor} modules over $\gl(m|n)$  was given in \cite{M2}, \cite{Stoilova2010}. Highest weight  
of such representation is a $m+n$-tuple $\la=(\la_{1},\la_{2},\ \ldots,\la_{m+n})$
of non-negative integers  
such that
\begin{equation}
\la_i-\la_{i+1}\in \mathrm{Z}_{\geq 0}, \text{ for }i=1, \ldots,m+n-1,  \quad i\neq m
\end{equation}
and
\begin{equation}
\la_m\geq\#\{i:\la_{i}>0,\ m+1\leq i\leq m+n\}.
\end{equation}

We generalize  the constructions of essentially typical and covariant tensor representations using the combinatorial approach developed in \cite{FRZ}.
It allows us to obtain large families of  \emph{quasi typical}  
and \emph{quasi covariant}
Gelfand-Tsetlin modules respectively,  together with their 
explicit tableau realization.  In particular, these families contain all finite dimensional essentially typical and covariant tensor representations. We give necessary and sufficient    conditions of the irreducibility  of constructed modules.  These results are subjects of Theorem \ref{admissible}, Theorem \ref{irreducibility}, Theorem \ref{Kac-module} and Theorem \ref{quasi covariant}. We summarize the statements in the following theorem.

\begin{theorem}\label{theorem-main2}
Let $\mathcal{C}$ be an admissible (respectively, covariant admissible) set of relations. 
\begin{itemize}
\item[(i)] The
 formulas \eqref{action h}-\eqref{action f} define a quasi typical (respectively, quasi covariant) $\gl(m|n)$-module structure on $V_{\mathcal C}([l^0])$ for any 
tableau $[l^0]$ satisfying $\mathcal{C}$ (respectively, for any
$\mathcal{C}$-covariant tableau
$[l^0]$). 
\item[(ii)] The module
$V_{\mathcal C}([l^0])$ is irreducible if and only if
$\mathcal C$ is the
maximal set of relations satisfied by $[l^0]$ and $l^0_{m+n,i}\neq l^0_{m+n, j}, 1\leq i\leq m<  j\leq m+n$ in the quasi typical case.
\item[(iii)] The module
$V_{\mathcal C}([l^0])$ is irreducible if and only if
$\mathcal C$ is the
maximal set of relations satisfied by $[l^0]$ in the quasi covariant case.
\item[(iv)] If 
$V_{\mathcal C}([l^0])$ is irreducible  quasi typical module then it is isomorphic to the induced module
$K(V_{\mathcal{C}}([l^0])^{\g_1})$.
\end{itemize}

\end{theorem}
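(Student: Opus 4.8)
The plan is to build the four assertions on top of the combinatorial machinery of \cite{FRZ} together with Theorem \ref{theorem-main}. For part (i), the strategy is to verify directly that the classical Gelfand-Tsetlin operators in formulas \eqref{action h}--\eqref{action f}, when restricted to the span $V_{\mathcal C}([l^0])$ of tableaux satisfying an admissible (resp.\ covariant admissible) set of relations $\mathcal C$, preserve that span and satisfy the defining relations of $\gl(m|n)$. The admissibility conditions on $\mathcal C$ are designed precisely so that whenever a coefficient in the raising/lowering formulas would move a tableau out of the admissible set, that coefficient vanishes; this is the same mechanism as in the purely even case of \cite{FRZ}, and the odd generators $e_{m,m+1}$, $e_{m+1,m}$ contribute only finitely many extra tableaux per basis vector, so the relations are checked on a case-by-case basis over the root vectors. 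The main point to confirm is that the resulting module is \emph{quasi typical} (resp.\ \emph{quasi covariant}), i.e.\ lies in the class singled out in Theorem \ref{admissible}; this follows because the defining inequalities of $\mathcal C$ force the non-vanishing of the relevant typicality-type denominators.

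For part (ii) and part (iii), the approach is the standard ``maximality of relations'' argument. One direction is immediate: if $\mathcal C$ is not maximal, then the larger set $\mathcal C'$ of all relations satisfied by $[l^0]$ cuts out a proper nonzero submodule $V_{\mathcal C'}([l^0]) \subsetneq V_{\mathcal C}([l^0])$, so the module is reducible; in the quasi typical case the extra hypothesis $l^0_{m+n,i}\neq l^0_{m+n,j}$ for $1\le i\le m<j\le m+n$ is exactly the genericity needed so that no ``crossing'' relation between an even and an odd bottom-row entry can be forced. For the converse, one shows that when $\mathcal C$ is maximal and the genericity condition holds, any nonzero submodule contains some basis tableau $[l]$, and then by applying suitable products of the raising and lowering operators (whose coefficients are now all nonzero, by maximality plus genericity) one reaches every other admissible tableau. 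This connectivity argument is where quasi typicality versus quasi covariance genuinely differ: in the quasi covariant case the weights may be non-generic and one must instead use the covariant tableau combinatorics and the compatibility with the even-part result via $\phi(\Gamma)=\Gamma_0$ from Theorem \ref{theorem-main}(iv)--(v) to descend irreducibility from a known even-part statement.

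For part (iv), the plan is to use Theorem \ref{theorem-main}(ii): an irreducible Gelfand-Tsetlin $\g$-module is the unique simple quotient of $K(N)$ for $N=V_{\mathcal C}([l^0])^{\g_1}$, the subspace annihilated by $\g_1$. One first identifies $N$ with the span of those tableaux in $V_{\mathcal C}([l^0])$ that are ``$\g_1$-highest'' for the consistent $\bZ$-grading, checks that $N$ is an irreducible Gelfand-Tsetlin $\g_0$-module (using part (ii)), and then shows the canonical surjection $K(N)\twoheadrightarrow V_{\mathcal C}([l^0])$ is in fact an isomorphism by a dimension/character count: quasi typicality forces the Kac module $K(N)$ to already be irreducible (its Shapovalov-type form is nondegenerate precisely under the typicality inequalities encoded in $\mathcal C$), so the surjection has no kernel. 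I expect the hardest step to be the connectivity part of (ii): controlling, in the super setting, how the odd generators interact with the even Gelfand-Tsetlin action and proving that the genericity condition $l^0_{m+n,i}\neq l^0_{m+n,j}$ is not merely sufficient but necessary for irreducibility, which requires exhibiting an explicit proper submodule whenever it fails.
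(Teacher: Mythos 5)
Your outline for parts (ii) and (iii) matches the paper's proof of Theorem \ref{irreducibility}: maximality plus the top-row separation condition gives connectivity via raising/lowering operators (the paper organizes the connectivity step case-by-case over $(k,i)$ and uses Corollary \ref{corollary separation} to extract a single tableau from any nonzero submodule; your ``any nonzero submodule contains a basis tableau'' is exactly this, though you should cite the separation-by-$\Gamma$ fact rather than asserting it). Your reducibility argument via the one-sided boundary created by an extra relation is the same mechanism the paper uses (it phrases it as $[l^2]\notin U(\g)[l^1]$ rather than as $V_{\mathcal C'}([l^0])$ being a submodule, but the underlying reason --- vanishing of the GT coefficient at the boundary --- is identical).

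For part (i), your high-level picture is right, but the proposal does not contain the two technical devices the paper's proof of Theorem \ref{admissible} actually turns on: (a) the reduction to \emph{generic} tableaux $[l]$ with $l_{ki}-l_{kj}\notin\mathbb Z$ for mixed indices, so that the coefficients of $g[l]$ become evaluations of rational functions which can be shown to vanish identically; and (b) the relation-removal method (Theorem \ref{rr}), which lets one start from the ``standard'' (essentially typical) set of relations and peel off relations one at a time while preserving the module structure. Without these, ``check the relations case-by-case'' is not a proof --- the Serre-type relations (viii)--(x) require choosing auxiliary admissible sets $\mathcal C'$ and auxiliary variable tableaux $[v]$ adapted to each coefficient, which is precisely where admissibility is used nontrivially. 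So this part of your proposal is a correct sketch of the statement but not of the argument.

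For part (iv) you take a genuinely different route. The paper (Theorem \ref{Kac-module}) proves injectivity of $\pi:K(W)\to V_{\mathcal C}([l^0])$ by a direct combinatorial argument: pick $v=\sum x_i\otimes w_i$ in the kernel with $x_1$ of minimal odd length, multiply by the complementary odd monomial $y$ to force $z w_1=0$ with $z$ the product of all negative odd root vectors, then exhibit a tableau $[l']$ of minimal $E_{11}+\cdots+E_{mm}$-eigenvalue that can only be reached through $z w_1$, a contradiction. You instead propose to deduce $K(N)\cong V_{\mathcal C}([l^0])$ from typicality of the central character, i.e.\ from Theorem \ref{thm-equiv}. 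This does work once you clean up the phrasing: the relevant input is not ``quasi typicality'' and not a nondegenerate Shapovalov form on a non--highest-weight module, but rather that the hypothesis $l^0_{m+n,i}\neq l^0_{m+n,j}$ for $1\le i\le m<j\le m+n$ (which is part of the irreducibility criterion, not of $\mathcal C$) makes the central character $\xi$ of $V_{\mathcal C}([l^0])$ typical, and then the equivalence $\mathcal M^{\xi_0}(\Gamma_0)\simeq\mathcal M^{\xi}(\Gamma)$ forces the counit $K_0 R_0 V_{\mathcal C}([l^0])\to V_{\mathcal C}([l^0])$ to be an isomorphism. That is cleaner and shorter than the paper's argument, at the cost of invoking the Penkov--Serganova equivalence as a black box; the paper's proof is self-contained and also identifies $W=V_{\mathcal C}([l^0])^{\g_1}$ explicitly as the span of tableaux with all $\theta_{ki}=0$, which your sketch leaves implicit.
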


\

\section{Preliminaries}
\subsection{Weight modules}
A $\mathbb Z_2$-graded vector space $\mathfrak g=\mathfrak g_{\bar{0}}\oplus \mathfrak g_{\bar{1}}$ with even bracket
$[\bullet,\bullet]:\mathfrak g\otimes \mathfrak g\to \mathfrak g$ is a Lie superalgebra iff the following conditions hold
$$[a,b]=-(-1)^{p(a)p(b)}[b,a];$$
$$[a,[b,c]]=[[a,b],c]+(-1)^{p(a)p(b)}[a,[b,c]].$$

Let $\g$ be a finite dimensional Lie superalgebra equipped
with a grading $\g=\g_{-1}\oplus \g_0 \oplus \g_1$, where $\g_{\pm 1}$
are odd abelian subalgebras and $\g_0=g_{\bar{0}}$ is even. Assume that $\g_0$, $\g_{\pm 1}$
are semisimple $\g_0$-modules and $\g_0$ contains an element $d$ such that $[d,x]=ix$ for any
$x\in\g_i$.

Let $\mathfrak h$ denote a Cartan subalgebra of $\mathfrak g_0$
 and assume that  it coincides with its centralizer in $\mathfrak g$.
Then $\mathfrak g$ has a root decomposition
$$\mathfrak g=\mathfrak h\oplus\bigoplus_{\alpha\in\Delta\subset\mathfrak h^*}\mathfrak g_{\alpha},$$
where $\mathfrak g_{\alpha}=\{x\in\mathfrak g | [h,x]=\alpha(h)x\}.$

We fix the sets of positive and
negative roots  $\Delta=\Delta^+\cup\Delta^-$ and a corresponding triangular
decomposition
$$\mathfrak g=\mathfrak n^-\oplus\mathfrak h\oplus\mathfrak n^+, $$ where
$\mathfrak n^{\pm}=\bigoplus\mathfrak g_\alpha$, $ \alpha\in\Delta^{\pm}$.

A $\mathfrak g$-module
(respectively, $\mathfrak g_0$-module) $M$ is called a \emph{weight} module (with respect to $\mathfrak h$) if
$$M=\oplus_{\lambda\in \mathfrak h^*} M_{\lambda},$$
where $M_{\lambda}=\{v\in M | hv=\lambda(h)v  \quad \text{for all}\quad h\in \mathfrak h\}$.

Let $M$ be a weight module over $\g_0$. Denote by $K(M)$ the
induced module: $$K(M)=U(\g)\otimes_{U(\g_0\oplus \g_1)}M,$$
where $\g_1 M=0$. If $M$ is irreducible then $K(M)$ has a
unique irreducible quotient which we will denote  $L(M)$.


Hence, $\bf K$, which sends $M$ to $K(M)$, defines a functor from the category of weight  $\g_0$-modules to the category of weight $\g$-modules.
On the other hand, for a weight $\g$-module $V$ set $R(V)=V^{\g_1}$ which is a weight $\g_0$-module. Thus we obtain the functor $\bf R$, which sends $V$ to $R(V)$,
from the category of weight  $\g$-modules to the category of weight  $\g_0$-modules.

The following statement is an immediate consequence of the Frobenius reciprocity for induced modules.
\begin{proposition}\label{prop-adjoint}
 The pair $(\bf K, \bf R)$ is adjoint.
\end{proposition}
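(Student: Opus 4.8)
The plan is to deduce the adjunction from the standard tensor--hom (Frobenius reciprocity) adjunction for induction along the inclusion $\mathfrak p:=\g_0\oplus\g_1\hookrightarrow\g$, combined with a direct identification of $\mathfrak p$-module maps out of $M$ with $\g_0$-module maps into the $\g_1$-invariants. First I would invoke the super PBW theorem, which gives $U(\g)\cong U(\g_{-1})\otimes U(\mathfrak p)$ as a right $U(\mathfrak p)$-module; in particular $U(\g)$ is free, hence flat, over $U(\mathfrak p)$. This legitimizes the natural isomorphism
$$\Hom_{\g}\bigl(U(\g)\otimes_{U(\mathfrak p)}M,\ V\bigr)\;\cong\;\Hom_{\mathfrak p}\bigl(M,\ \mathrm{Res}^{\g}_{\mathfrak p}V\bigr),$$
natural in the weight $\g_0$-module $M$ (viewed as a $\mathfrak p$-module with $\g_1 M=0$) and the weight $\g$-module $V$.

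Next I would identify the right-hand side with $\Hom_{\g_0}(M,{\bf R}(V))$. For $f\in\Hom_{\mathfrak p}(M,V)$, $m\in M$ and $y\in\g_1$ we get $y\cdot f(m)=f(y\cdot m)=f(0)=0$, so $f(M)\subseteq V^{\g_1}={\bf R}(V)$; conversely every $\g_0$-map $M\to V^{\g_1}$ is automatically $\mathfrak p$-linear, since $\g_1$ annihilates both source and target. One also checks that $V^{\g_1}$ is a $\g_0$-submodule: for $v\in V^{\g_1}$, $x\in\g_0$ (even), $y\in\g_1$ one has $y(xv)=x(yv)+[y,x]v=0$ because $[y,x]\in\g_1$; and $V^{\g_1}$ is a weight module, being $\mathfrak h$-stable (as $\mathfrak h\subseteq\g_0$) inside the weight module $V$. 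Likewise ${\bf K}(M)$ is a weight $\g$-module because $U(\g_{-1})$ is $\mathfrak h$-diagonalizable, so both functors land in the stated categories. Composing the two natural isomorphisms gives $\Hom_\g({\bf K}(M),V)\cong\Hom_{\g_0}(M,{\bf R}(V))$, natural in both variables, i.e. $({\bf K},{\bf R})$ is an adjoint pair with ${\bf K}$ left adjoint to ${\bf R}$.

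The only ingredient that is not a pure unwinding of definitions is the freeness of $U(\g)$ over $U(\mathfrak p)$ required to apply Frobenius reciprocity without finiteness hypotheses, and this is immediate from super PBW; the remaining verifications (stability of $V^{\g_1}$, weight structures, naturality) are routine. Hence the proof is short, as asserted.
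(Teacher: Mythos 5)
Your proof is correct and takes the same route the paper does: the paper disposes of this proposition with the single remark that it is ``an immediate consequence of the Frobenius reciprocity for induced modules,'' and you have simply unwound that remark in full, verifying the freeness of $U(\g)$ over $U(\g_0\oplus\g_1)$ via super PBW, the identification $\Hom_{\g_0\oplus\g_1}(M,V)\cong\Hom_{\g_0}(M,V^{\g_1})$, the $\g_0$-stability of $V^{\g_1}$, and the preservation of the weight condition. No discrepancy with the paper's argument.
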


\

Important examples of weight modules are Verma modules and their irreducible quotients.
Let $\mathfrak b=\mathfrak h\oplus\mathfrak n^+$, $\lambda\in
\mathfrak h^*$. A Verma module with highest weight $\lambda$ is the following induced module
$$M(\lambda)=U(\mathfrak g)\otimes_{U(\mathfrak b)}Cv_{\lambda},$$
where $\mathfrak n^+v=0$ and $hv=\lambda(h)v$ for all $h\in \mathfrak h$.
It has a unique irreducible quotient $L(\lambda)$.

For a dominant weight $\lambda$ denote by $L_0(\lambda)$ the unique irreducible $\g_0$-module with highest weight 
$\lambda$. Then 
$K(\lambda):=K(L_0(\lambda))$ is the indecomposable \emph{Kac module} with highest weight $\lambda$. 

 Let
$\rho=\frac12\sum_{\alpha\in\Delta+}(-1)^{p(\alpha)}\alpha$ and $\lambda\in \mathfrak h^*$.
 A dominant  weight $\lambda$ is \emph{typical} if  
 $(\lambda+\rho,\alpha)\neq 0$ for any odd positive root $\alpha$. In the case of dominant typical $\lambda$, $K(\lambda):=L(\lambda)$.

We have $U(\g)=(\g_{-1}U(\g)+U(\g)\g_1)\oplus U(\g_0)$. Then the projection onto $U(\g_0)$ defines an analogue of the \emph{Harish-Chandra homomorphism}
 $\phi: U(\g)^d \rightarrow U(\g_{\bar{0}})$
 where $U(\g)^d=\{y\in U(\g)\mid [d,y]=0\}$.

Let $Z(\mathfrak g)$ denote the center of universal enveloping algebra $U(\mathfrak g)$ \cite{BZV}, \cite{K2}, \cite{Se}.
Then $\phi(Z(\mathfrak g))\subset Z(\mathfrak g_0)$, where $Z(\mathfrak g_0)$ is the center of the universal enveloping algebra $U(\mathfrak g_0)$. If $\phi_0$ is the classical Harish-Chandra homomorphism for $\g_0$, then  $\phi_0\circ \phi:U(\mathfrak g)\to U(\mathfrak h)=\text{Pol}(\mathfrak h^*)$. 

\

\subsection{The Lie superalgebra $\mathfrak{gl}(m|n)$}
The underlying vector space of the Lie superalgebra $\g_{m,n}=\mathfrak{gl}(m|n)$
is spanned by
the standard basis elements $E_{ij}, 1\leq i,j \leq m+n$. The $\mathrm{Z}_{2}$-grading on $\gl(m|n)$ is defined by $E_{ij}\mapsto \bar{i}+\bar{j}$, where $\bar{i}$ is an element of $\mathrm{Z}_{2}$ which equals $0$ if $i\leq m$
and $1$ if $i>m$. The commutation relations in this basis are given by
\begin{equation}
[E_{ij},\ E_{kl}]=\delta_{kj}E_{il}-\delta_{il}E_{kj}(-1)^{(\bar{i}+\bar{j})(\bar{k}+\bar{l})}.
\end{equation}
%
Alternatively, $\mathfrak{g}_{m,n}$ can be defined  as the quotient of the free Lie superalgebra $\hat{\mathfrak{g}}$ over $\mathbb{C}$ generated by  $e_{i}, f_{i}, (1 \leq i\leq  m+n-1)$, $h_{j}, (1\leq j\leq m+n)$,
subject to the following relations (unless stated otherwise, the indices below run over all possible values):
\begin{align}
[h_{i},h_{j}]=0;\\
[h_{i},e_{j}]=(\delta_{ij}-\delta_{i,j+1})e_{j};\\
[h_{i},f_{j}]=-(\delta_{ij}-\delta_{i,j+1})f_{j};\\
[e_{i},f_{j}]=0  \text{ if }  i\neq j;\\
[e_{i},f_{i}]=h_{i}-h_{i+1} \text{ if } i\neq m;\\
[e_{m},f_{m}]=h_{m}+h_{m+1};
\\
[e_{i},e_{j}]=[f_{i},f_{j}]=0 \text{ if } |i-j|> 1;\\
[e_m,e_m]=[f_m,f_m]=0; \\
[e_i,[e_i,e_{i\pm 1}]]=[f_i,[f_i,f_{i\pm 1}]]=0, \text{ for } i\neq m;\\
[e_m[e_{m\pm1},[e_m,e_{m\mp1}]]]=[f_m[f_{m\pm 1},[f_m,f_{m\mp1}]]]=0;
\end{align}
The isomorphism between two algebras is defined by
$e_i\mapsto E_{i,i+1}, f_i\mapsto E_{i+1,i},1\leq i\leq m+n-1 $ and $h_i\mapsto E_{ii},1\leq i\leq m+n$. 
We define the parity function on $\{1,\dots,m+n\}$ by setting 
$$\bar {j}=\begin{cases} 0\,\,j\leq m\\1\,\,j\geq m+1\end{cases}.$$

Let $\hat{E}$  be the $(m+n)\times(m+n)$  matrix with coefficients in $U(\g)$ whose $ij$-th entry equals $\hat{E}_{ij}= (-1)^{\bar{j}}E_{ij}$.
The {\it quantum Berezinian} is an element in $U(\g)[[t]]$ defined by
\begin{equation}
\begin{split}
B(t)=\sum_{\sigma\in S_{m}} sgn  (\sigma)(1+t\hat{E})_{\sigma(1),1}\cdots(1+t(\hat{E}-m+1))_{\sigma(m),m}
\\
 \times\sum_{\tau\in S_{n}}
 sgn(\tau)(1+t(\hat{E}-m+1))_{m+1,m+\tau(1)}^{-1}\cdots(1+t(\hat{E}-m+n))_{m+n,m+\tau(n)}^{-1}.
\end{split}
\end{equation}
The coefficients of quantum Berezinian  generate the center of $U(\gl_{m,n})$, see \cite{Nazarov1991}, \cite{M1}.

%

Fix the Borel subalgebra $\g$ containing $\g_1$.
Let  $x_{i}=\lambda_{i}-i+1$ for $i=1, \ldots, m,$ 
$x_{j}=-\lambda_{j}+j-2m$ for $j=m+1, \ldots, m+n.$
Then $B(t)$  acts on $L(\lambda)$ by the scalar
\begin{equation}\label{hc}
\frac{(1+tx_{1})\cdots(1+tx_{m})}
{(1+tx_{m+1})\cdots(1+tx_{m+n})}.
\end{equation}

%
The even and the odd roots  in the standard basis are respectively the following:
$$\begin{array}{c}\Delta_0=\{(\varepsilon_i-\varepsilon_j) |i,j\leq m\}\cup
\{(\delta_i-\delta_j) |i,j\leq n\},  \\    \Delta_1=\{\pm(\varepsilon_i-\delta_j) |i\leq m,j\leq n\},\end{array}$$
 where $$(\varepsilon_i,\delta_j)=0,\,(\varepsilon_i,\varepsilon_j)=\delta_{ij},\,(\delta_i,\delta_j)=-\delta_{ij}.$$

\section{Gelfand-Tsetlin subalgebras}

Set $\g_{k,l}=\mathfrak{gl}(k|l)$ where $k\geq 0$ and $l\geq 0$. We call a
\emph{Gelfand-Tsetlin chain} any chain of subalgebras $\g_{k,l}$
in $\g_{m|n}$ such that on every step we have either
$\g_{k,l}\supset \g_{k-1,l}$ or $\g_{k,l}\supset \g_{k,l-1}$.

Fix a Gelfand-Tsetlin chain of subalgebras. Let
$Z_{k,l}=Z(\g_{k,l})$ be the center of the universal enveloping
algebra $U(\g_{k,l})$. A commutative subalgebra $\Gamma\in
U(\g_{m,n})$ generated by the centers $Z_{k,l}$ corresponding to
the members of the Gelfand-Tsetlin chain is called a
\emph{Gelfand-Tsetlin subalgebra}. 
We will say that a Gelfand-Tsetlin subalgebra is \emph{special} if it contains  a Gelfand-Tsetlin
subalgebra of $\mathfrak{gl}(n)$ or $\mathfrak{gl}(m)$.

 Consider a Gelfand-Tsetlin chain in $\g_{m,n}$ and a
corresponding Gelfand-Tsetlin subalgebra $\Gamma$. The
Gelfand-Tsetlin chain of $\Gamma$ induces the Gelfand-Tsetlin chain
in the even part $\g_{0}$ with subalgebras $(\g_{k,l})_0$.

The centers $Z((\g_{k,l})_0)$  of the universal enveloping
algebras $U((\g_{k,l})_{0})$ generate a
 Gelfand-Tsetlin subalgebra $\Gamma_0$ of
$U(\g_{0})$. 

Recall the homomorphism $\phi: U(\g)^d \rightarrow U(\g_{0})$. Since $\Gamma\subset U(\g)^d$ 
we can consider the restriction
$\phi(\Gamma)\subset U(\g_0)$. Since $\phi(Z_{k,l}\subset Z((\g_{k,l})_0)$ we get $\phi(\Gamma)\subset\Gamma_0$.

\begin{proposition}\label{prop-HC-surj}
The restriction of $\phi:\Gamma\to\Gamma_0$ is surjective, that is
$\phi(\Gamma)= \Gamma_0$.

\end{proposition}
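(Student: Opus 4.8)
The plan is to prove surjectivity stepwise along the Gelfand-Tsetlin chain, reducing to a single rank-one step. Write the chain as $\g_{m,n}=\g^1\supset\g^2\supset\cdots\supset\g^{m+n}$, where each $\g^k$ is some $\gl(p|q)$, and each inclusion $\g^{k}\supset\g^{k+1}$ drops either the last even index or the last odd index. The subalgebra $\Gamma$ is generated by the centers $Z(\g^k)$, and $\Gamma_0$ by the centers $Z((\g^k)_0)$. It suffices to show that each generator of $\Gamma_0$, i.e. each element of $Z((\g^k)_0)$, lies in $\phi(\Gamma)$; since $\phi$ is an algebra homomorphism on $U(\g)^d$ and $\phi(\Gamma)\subseteq\Gamma_0$ is already established, this will give equality. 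So the problem localizes: for a fixed $\gl(p|q)$ (playing the role of $\g^k$), show that $\phi\bigl(Z(\gl(p|q))\bigr)$ together with the images of the centers of the smaller members of the chain already generates $Z(\gl(p)\oplus\gl(q))$ — and by an obvious induction on the length of the chain it is enough to show that $\phi\bigl(Z(\gl(p|q))\bigr)$ surjects onto $Z(\gl(p)\oplus\gl(q))$ modulo what the subchain contributes, which again by induction reduces to: the images of all the centers $Z(\g^j)$, $j\ge k$, generate $Z((\g^k)_0)$.

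The concrete heart of the matter is therefore the following. Both $Z(\gl(p|q))$ and $Z(\gl(p)\oplus\gl(q))=Z(\gl(p))\otimes Z(\gl(q))$ are free polynomial algebras whose generators can be read off from Harish-Chandra–type images: for $\gl(p|q)$ the center is generated by the coefficients of the quantum Berezinian $B(t)$, whose eigenvalue on a highest weight module is the rational function \eqref{hc}, namely $\prod_{i\le p}(1+tx_i)/\prod_{j}(1+tx_{p+j})$ in the shifted variables $x_i$; for $\gl(p)$ and $\gl(q)$ separately the centers are generated by the coefficients of $\prod_{i\le p}(1+tx_i)$ and $\prod_j(1+tx_{p+j})$ respectively. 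I would make precise that $\phi$ intertwines the quantum Berezinian of $\gl(p|q)$ with the corresponding ratio of characteristic polynomials of the even part — this is exactly what formula \eqref{hc} encodes together with $\phi_0\circ\phi$ landing in $\mathrm{Pol}(\mathfrak h^*)$ — so that $\phi(B(t))$ has eigenvalue $e_p(t)/e_q(t)$ where $e_p(t),e_q(t)$ are the (Capelli-type) generating series for $Z(\gl(p))$ and $Z(\gl(q))$. Now a purely formal argument finishes the step: from the single chain inclusion we get, in addition, the image of $Z(\gl(p-1|q))$ or $Z(\gl(p|q-1))$, which by induction on $p+q$ provides $e_{p-1}(t)/e_q(t)$ (say); dividing, $\phi(\Gamma)$ contains the ratio $(1+tx_p)$, hence its coefficients, hence that one extra generator of $Z(\gl(p))$; iterating down the even chain and separately down the odd chain recovers all generators of $Z(\gl(m))\otimes Z(\gl(n))=\Gamma_0$.

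I would organize the write-up as: (1) reduce to showing $\phi(\Gamma)\supseteq Z((\g^k)_0)$ for every $k$; (2) prove the base case $\gl(0|n)$ or $\gl(m|0)$, where $\phi$ is the identity and there is nothing to prove; (3) the inductive step as above, using \eqref{hc} to identify $\phi$ of the quantum Berezinian and then dividing two such rational functions coming from consecutive members of the chain to peel off one linear factor $1+tx_i$ at a time; (4) conclude that all polynomial generators of $\Gamma_0$ are obtained, so $\phi(\Gamma)=\Gamma_0$.

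The main obstacle I anticipate is step (3)'s bookkeeping: one must be careful that when the chain step removes an \emph{odd} index the division produces $e_q(t)/e_{q-1}(t)$ in the denominator and one divides rather than multiplies, and that the shifts $x_i=\lambda_i-i+1$ versus $x_j=-\lambda_j+j-2m$ are compatible across the nested subalgebras (the shift for $\gl(p|q)$ must restrict correctly to the one for $\gl(p-1|q)$, up to an overall reindexing constant that does not affect which algebra the coefficients generate). Granting that the eigenvalue formula \eqref{hc} is functorial along the chain — which is the content of the cited results of Nazarov and Molev on the quantum Berezinian — the division argument is clean and the surjectivity follows.
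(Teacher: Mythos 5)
Your proposal is correct and uses essentially the same two ingredients as the paper: induction along the Gelfand--Tsetlin chain, and formula~\eqref{hc} identifying the Harish--Chandra image of the quantum Berezinian as a ratio of linear factors, followed by a formal power-series manipulation to recover the even part's generators. The paper's execution of the final step is slightly cleaner than yours: rather than dividing the series coming from $Z(\g_{m,n})$ by the one coming from $Z(\g_{m-1,n})$ to peel off $(1+tx_m)$, the paper observes that the induction hypothesis already puts $Z(\gl(n))\subset\phi(\Gamma')\subset\phi(\Gamma)$, so one may simply \emph{multiply} $\phi(B(t))$ by the Capelli generating series of $Z(\gl(n))$ to clear the denominator of \eqref{hc} and land on the generators of $Z(\gl(m))$. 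This keeps every intermediate element inside $Z(\g_0)$, where the injectivity of $\phi_0\colon Z(\g_0)\to S(\mathfrak h)$ is a classical fact and gives the identification for free. Your division $\phi(B_{m,n}(t))/\phi(B_{m-1,n}(t))$ instead produces coefficients lying in $\Gamma_0$ but not in $Z(\g_0)$, so $\phi_0$ no longer applies; you would need to appeal to the injectivity of the character map on the full Gelfand--Tsetlin subalgebra $\Gamma_0$ (which holds because $\Gamma_0$ is a polynomial algebra, as noted in the paper's Remark, but which you do not invoke explicitly). So be explicit about which injectivity you are using when you convert the eigenvalue computation $e_p(t)/e_{p-1}(t)=1+tx_p$ into a statement about membership in $\phi(\Gamma)$; or, more simply, replace the division of two ratios by multiplication by $e_q(t)$, which the induction hypothesis already hands you.
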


\begin{proof}
We will prove the statement by induction on the rank $m+n$. The base of induction is trivial. 
Suppose that $\g'=\g_{m-1,n}$ is the member 
of the Gelfand-Tsetlin chain, $\Gamma'\subset \Gamma$ is the
corresponding Gelfand-Tsetlin subalgebra of $U(\g')$ and
$\Gamma_0'\subset \Gamma_0$ is the corresponding Gelfand-Tsetlin
subalgebra of $U(\g'_0)$.
 By induction assumption
$\phi(\Gamma')=\Gamma'_0$. This implies, in particular, that $\phi(\Gamma')$ contains
$Z(\mathfrak{gl}(n))$. We need now the following

\begin{lemma} $Z(\mathfrak{gl}(n))$ and
$\phi(Z_{m,n})$ generate the center  of $U(\g_0)$.
\end{lemma}
\begin{proof} Recall the description of the image under the Harish-Chandra homomorphism 
$HC:=\phi_0\phi$ given in the the end of the previous section. Using formula (\ref{hc}) and the fact that
$HC(Z(\mathfrak{gl}(n)))$ is generated by coefficients of $(1+tx_{m+1})\cdots(1+tx_{m+n})$ we obtain that
$ HC(Z(\mathfrak{gl}(n)))$ and $HC(Z_{m,n})$ generate $HC(Z(\g_0))$. Since
$\phi_0: Z(\g_0)\to  S(\mathfrak h)$
is injective, we obtain the statement of Lemma.
\end{proof}
 Hence,
$\phi(Z_{m,n})$ and $\Gamma'_0$ generate  $Z(\g_0)$ and thus
$\Gamma_0$. We conclude that $\phi$ is surjective.

\end{proof}

Note that the restriction $\phi:Z(\g_{m,n})\to Z(\g_0)$
is injective but not surjective. In contrast the restriction of $\phi$
to $\Gamma$ is surjective but not injective as one can see from the following example.

\begin{example}
Let $\g=\mathfrak{gl}(1|1)$. In this case there exist two Gelfand-Tsetlin
chains $\mathfrak{gl}(1|1)\supset \mathfrak{gl}(1|0)$ and $\mathfrak{gl}(1|1)\supset
\mathfrak{gl}(0|1)$. Both chains define the same Gelfand-Tsetlin subalgebra
$\Gamma$ generated by $E_{00}$, $E_{11}$ and $x=E_{10}E_{01}\in\operatorname{Ker}\phi_0$.
Observe that we have the relation
$$x(x-E_{00}-E_{11})=0.$$

\end{example}

The homomorphism $\phi:\Gamma\to\Gamma_0$ induces the dual
map
$$\phi^*: \hat{\Gamma}_0 \rightarrow \hat{\Gamma},$$ 
which is injective by the above proposition.


\begin{remark}
Since $\Gamma_0$ is a polynomial algebra there exists an injective
homomorphism $\psi:\Gamma_0\rightarrow \Gamma$ such that $\phi\psi=\operatorname{id}$
and $\psi((\Gamma_{k,l})_0)\subset \Gamma_{k,l}$.
\end{remark}

\begin{remark} It is not difficult to see that if $\g=\mathfrak{gl}(n|1)$ and $\Gamma$ is the Gelfand-Tsetlin subalgebra associated with the chain
  $$\mathfrak{gl}(1)\subset \mathfrak{gl}(2)\subset\dots\subset\mathfrak{gl}(n) \subset \mathfrak{gl}(n|1)$$
  then $\Gamma$ contains $\Gamma_0$. In general, $\Gamma_0$ is not a subalgebra of $\Gamma$.
    \end{remark}

\begin{example}  Consider the chain of superalgebras:
  $$\mathfrak{gl}(1)\subset\mathfrak{gl}(1|1)\subset\mathfrak{gl}(1|2).$$
  Let us see that $\Gamma$ does not contain $\Gamma_0$, moreover, these two subalgebras do not commute.
  Indeed, by the previous example $x=E_{10}E_{01}$ is an element of $\Gamma$ and $y=E_{32}E_{23}$ is an element of $\Gamma_0$, since the Casimir element
  of $U(\mathfrak{gl}(2))$ lies in $U(\mathfrak h)+2y$. A simple computation shows that
  $$[x,y]=E_{31}E_{23}E_{12}-E_{21}E_{32}E_{13}\neq 0.$$
  \end{example}

\

\section{Gelfand-Tsetlin modules for $\gl(m|n)$}

\subsection{Gelfand-Tsetlin modules over $\g_{0}$}

Let $\g=\mathfrak{gl}(m|n)=\g{-1}\oplus\g_{0}\oplus \g_1$. 
Let $\Gamma_0$ be a Gelfand-Tsetlin subalgebra of $U(\g_0)\simeq U(\mathfrak{gl}(m)\oplus \mathfrak{gl}(n))$.
We will denote by $\hat{\Gamma}_0=Hom(\Gamma_0, \C)$ the space of characters of $\Gamma_0$.

 The
following is straightforward.

\begin{proposition} There exist a Gelfand-Tsetlin subalgebra $\Gamma(m)$ of $U(\mathfrak{gl}(m))$ and a Gelfand-
Tsetlin subalgebra $\Gamma(n)$ of $U(\mathfrak{gl}(n))$ such that $\Gamma_0\simeq \Gamma(m)\otimes \Gamma(n)$. 
 \end{proposition}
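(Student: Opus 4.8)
The plan is to unwind the definition of the Gelfand-Tsetlin subalgebra $\Gamma_0$ and exploit the fact that the relevant chain of subalgebras in $\g_0 \simeq \gl(m)\oplus\gl(n)$ splits as a direct sum. Recall that $\Gamma_0$ is generated by the centers $Z((\g_{k,l})_0)$ for the members $\g_{k,l}$ of the fixed Gelfand-Tsetlin chain, and each $(\g_{k,l})_0 \simeq \gl(k)\oplus\gl(l)$. Since $U(\gl(k)\oplus\gl(l)) \simeq U(\gl(k))\otimes U(\gl(l))$ and the center of a tensor product of enveloping algebras is the tensor product of the centers, we have $Z((\g_{k,l})_0) \simeq Z(\gl(k))\otimes Z(\gl(l))$. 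As $(k,l)$ ranges over the chain, the first coordinate $k$ takes values $1,2,\dots,m$ (possibly with repetitions, at the steps where we remove an odd index) and likewise $l$ ranges over $1,\dots,n$.

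First I would make precise that the projections onto the two summands of the Gelfand-Tsetlin chain recover honest Gelfand-Tsetlin chains in each factor. Concretely, the chain of $\gl(m)$-parts extracted from $\{\g_{k,l}\}$, after discarding repetitions, is a complete flag $\gl(1)\subset\gl(2)\subset\dots\subset\gl(m)$, and its associated Gelfand-Tsetlin subalgebra of $U(\gl(m))$ I would call $\Gamma(m)$; similarly one obtains $\Gamma(n)\subset U(\gl(n))$. Then I would argue that, inside $U(\g_0) = U(\gl(m))\otimes U(\gl(n))$, the subalgebra generated by all the $Z(\gl(k))\otimes 1$ is exactly $\Gamma(m)\otimes 1$ (by definition of $\Gamma(m)$), the subalgebra generated by all the $1\otimes Z(\gl(l))$ is $1\otimes\Gamma(n)$, and these two commute. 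Hence the algebra they jointly generate is the image of $\Gamma(m)\otimes\Gamma(n)$ under the multiplication map, and since $\Gamma(m)$ and $\Gamma(n)$ are polynomial algebras in independent generators (entries of Harish-Chandra images in disjoint sets of variables $\varepsilon_1,\dots,\varepsilon_m$ versus $\delta_1,\dots,\delta_n$), this map is an isomorphism onto $\Gamma_0$.

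The key steps in order: (1) decompose $U(\g_0)\simeq U(\gl(m))\otimes U(\gl(n))$ and identify $Z((\g_{k,l})_0)$ with $Z(\gl(k))\otimes Z(\gl(l))$; (2) extract the two sub-chains and define $\Gamma(m)$, $\Gamma(n)$ as their Gelfand-Tsetlin subalgebras; (3) observe that the generators of $\Gamma_0$ lie in $\Gamma(m)\otimes 1$ together with $1\otimes\Gamma(n)$, so $\Gamma_0\subseteq \Gamma(m)\otimes\Gamma(n)$, and conversely each $Z(\gl(k))\otimes 1$ and $1\otimes Z(\gl(l))$ appears among the generators (the first coordinate does run through all of $1,\dots,m$ and the second through all of $1,\dots,n$ because the chain descends all the way to $\g_{k,l}$ with $k+l=1$), giving the reverse inclusion; (4) check that the multiplication map $\Gamma(m)\otimes\Gamma(n)\to U(\g_0)$ is injective, which follows because $U(\gl(m))\otimes U(\gl(n))$ is a tensor product so no nontrivial relation can mix the two factors.

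The main obstacle — though a mild one, which is presumably why the authors call the statement ``straightforward'' — is step (3): verifying that every center $Z(\gl(k))$ for $k=1,\dots,m$ and every $Z(\gl(l))$ for $l=1,\dots,n$ genuinely occurs as a tensor factor of some $Z((\g_{k',l'})_0)$ along the chain. This is where one uses that a Gelfand-Tsetlin chain in $\g_{m,n}$ terminates at a rank-one subalgebra, so projecting to the even part one does obtain the full descending flags in both $\gl(m)$ and $\gl(n)$; the steps that remove an odd node simply leave the $\gl(m)$-part unchanged and shrink the $\gl(n)$-part (or vice versa), so no center is skipped. Once this bookkeeping is done, the tensor-product identification is immediate and the isomorphism $\Gamma_0 \simeq \Gamma(m)\otimes\Gamma(n)$ follows.
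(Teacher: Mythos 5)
Your proof is correct and supplies exactly the verification the paper leaves implicit by labeling the proposition ``straightforward.'' The two essential observations — that $Z((\g_{k,l})_0)\simeq Z(\gl(k))\otimes Z(\gl(l))$ inside $U(\gl(m))\otimes U(\gl(n))$, and that since the Gelfand--Tsetlin chain terminates at rank one, both coordinates $(k,l)$ pass through every intermediate value so that $\Gamma_0$ is generated precisely by $Z(\gl(k))\otimes 1$ for $k\leq m$ together with $1\otimes Z(\gl(l))$ for $l\leq n$ — are the right ones, and the injectivity of $\Gamma(m)\otimes\Gamma(n)\hookrightarrow U(\gl(m))\otimes U(\gl(n))$ is immediate since the map is just the restriction of the identity on the tensor product.
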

 
 Thus combining Gelfand-Tsetlin subalgebras of $U(\mathfrak{gl}(m))$ and $U(\mathfrak{gl}(n))$ we obtain all Gelfand-Tsetlin subalgebras of 
$U(\g_{0})$.

A finitely generated module $M$ over $\g_0$ is called a {\em
Gelfand-Tsetlin module\/} (with respect to $\Gamma_0$) if \begin{equation*}
M=\underset{\chi\in \hat{\Gamma}_0}{\bigoplus}M({\chi}), \end{equation*} where \begin{equation*} M({\chi}) \ = \ \{ x\in M\ | \  (z-\chi(z))^k x
=0\quad \text{for some}\quad k\geqslant 0  \quad \text{and all} \quad  z\in \Gamma_0   \}. \end{equation*}

Denote by $\mathcal M(\Gamma_0)$ the category of Gelfand-Tsetlin modules  over $\g_{0}$ with respect to the subalgebra $\Gamma_0$.
Clearly, all finite dimensional $\g_{0}$-modules belong to $\mathcal M(\Gamma_0)$. 


\begin{theorem}\label{g0-GT-finite-length} Let $V$ be a  finite dimensional $\g_{0}$-module. For $M\in \mathcal M(\Gamma_0)$ denote 
$F_V(M)=M\otimes V$. Then the correspondance
 $F_V: M \rightarrow F_V(M)$ defines a  endofunctor of the category $\mathcal M(\Gamma_0)$.


\end{theorem}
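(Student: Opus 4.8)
The plan is to show that $F_V(M) = M \otimes V$ is again a Gelfand-Tsetlin module over $\g_0$, and that the assignment is functorial in the obvious way (on a morphism $f\colon M \to M'$ we set $F_V(f) = f \otimes \mathrm{id}_V$, which is clearly a $\g_0$-homomorphism respecting composition and identities). So the real content is the claim that $F_V(M) \in \mathcal M(\Gamma_0)$ whenever $M \in \mathcal M(\Gamma_0)$. Two things must be checked: that $M \otimes V$ is finitely generated over $\g_0$, and that it decomposes as a direct sum of generalized $\hat\Gamma_0$-eigenspaces.

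For finite generation, I would argue as follows. Since $\Gamma_0 \simeq \Gamma(m) \otimes \Gamma(n)$ is a finitely generated polynomial algebra and $M$ is a finitely generated $\g_0$-module lying in $\mathcal M(\Gamma_0)$, standard arguments (as in the classical Gelfand-Tsetlin theory, cf.\ \cite{DFO}) show that $M$ is finitely generated already as a $\Gamma_0$-module, hence as a $U(\g_0)$-module the span of finitely many $\Gamma_0$-generators. Tensoring with the finite dimensional $V$: if $v_1,\dots,v_r$ generate $M$ over $U(\g_0)$ and $w_1,\dots,w_s$ is a basis of $V$, then the finitely many elements $v_i \otimes w_j$ generate $M\otimes V$ over $U(\g_0)$, because $U(\g_0)$ acts on a tensor product via the coproduct and $V$ is finite dimensional, so any $u(v_i)\otimes w$ can be rewritten using $\Delta(u)$ as a $U(\g_0)$-combination of the $v_i \otimes w_j$. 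Thus $M\otimes V$ is finitely generated over $\g_0$.

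For the generalized weight decomposition: fix a maximal ideal $\bm \in \Specm\Gamma_0$ and consider the generalized eigenspace $M(\bm)$. I want to show $M(\bm)\otimes V$ lies in finitely many generalized $\Gamma_0$-eigenspaces of $M\otimes V$, and that these cover all of $M\otimes V$ as $\bm$ ranges over $\Specm\Gamma_0$. The key point is a Harish-Chandra / PBW argument: the generators of $\Gamma_0$ (coefficients of the Capelli-type central elements of the subalgebras in the chain) act on $M\otimes V$ through the coproduct, and modulo operators that land in $U(\g_0)\cdot(\text{lower filtration})$ act "diagonally" on the $V$-factor by the corresponding central characters occurring in $V$. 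More precisely, since $V$ is finite dimensional it has a finite composition series, so only finitely many $\g_0$-central characters occur in $V$; combining this with the finiteness of the set of $\Gamma_0$-characters of $V$ (again $V$ finite dimensional, hence a Gelfand-Tsetlin module with finitely many characters), one concludes that $\Gamma_0$ acts locally finitely on $M\otimes V$ and that each $M(\bm)\otimes V$ decomposes into finitely many generalized eigenspaces with maximal ideals $\bm'$ differing from $\bm$ by the finitely many "shifts" coming from $V$. Summing over $\bm$ gives the required decomposition $M\otimes V = \bigoplus_{\chi} (M\otimes V)(\chi)$.

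The main obstacle I expect is making the "acts locally finitely through the coproduct" step rigorous, i.e.\ controlling how the generators of $\Gamma_0$ — which are not primitive elements — interact with the tensor factor $V$. The clean way around this is the filtration argument: put the PBW filtration on $U(\g_0)$, note that a central generator $z$ of degree $d$ satisfies $\Delta(z) = z\otimes 1 + 1\otimes z + (\text{terms of lower filtration degree in each factor})$, and the lower-degree correction terms, acting on the finite dimensional $V$, only produce finitely many new ``weights'' and act nilpotently-up-to-those-shifts on each $M(\bm)$. One then runs an induction on the number of subalgebras in the Gelfand-Tsetlin chain (equivalently on $m+n$), using that $\Gamma_0 = \Gamma(m)\otimes\Gamma(n)$ and handling each tensor factor separately by the classical result for $\mathfrak{gl}$. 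Once local finiteness of $\Gamma_0$ on $M\otimes V$ is established, the direct-sum decomposition into generalized eigenspaces is automatic, and together with finite generation this completes the verification that $F_V$ is an endofunctor of $\mathcal M(\Gamma_0)$.
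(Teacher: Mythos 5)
Your overall strategy is the same as the paper's: reduce to showing that $M\otimes V$ is again finitely generated over $U(\g_0)$ and that $\Gamma_0$ acts locally finitely on it. But there are two problems with the execution.

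For finite generation, your opening claim that a Gelfand--Tsetlin module $M$ is finitely generated \emph{as a $\Gamma_0$-module} is false in general: a $\Gamma_0$-module that is finitely generated has finite support, yet an infinite-dimensional object of $\mathcal M(\Gamma_0)$ (a Verma module, say) has infinitely many nonzero generalized eigenspaces $M(\chi)$, each of which would have to contribute to the support. Fortunately this claim plays no role in what follows. The correct and shorter route is the one the paper takes: reduce to $M$ cyclic, write $M$ as a quotient of $U(\g_0)$, and use the standard $U(\g_0)$-module isomorphism between $U(\g_0)\otimes V$ with diagonal action and $U(\g_0)\otimes V$ with action only on the left factor (the ``twist'' $u\otimes v\mapsto \sum u_{(1)}\otimes u_{(2)}v$); hence $M\otimes V$ is a quotient of a free module of rank $\dim V$. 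Your assertion that $v_i\otimes w_j$ generate is the right conclusion, but the coproduct hand-wave you offer is just a verbal version of this isomorphism; stating it explicitly makes the claim rigorous.

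For local finiteness, your coproduct/PBW heuristic does not close. Writing $\Delta(z)=z\otimes 1+1\otimes z+\sum_i a_i\otimes b_i$ with $a_i,b_i$ of lower filtration degree gives $z\cdot(m\otimes v)=zm\otimes v+m\otimes zv+\sum_i a_im\otimes b_iv$. The term $m\otimes zv$ is controllable because $V$ is finite dimensional, but the $a_i$ are \emph{not} central, so the corrections do not stabilize $M(\bm)\otimes V$ up to a finite set of shifts in any obvious way; showing that the generalized $Z_k$-eigenspaces of $M\otimes V$ still exhaust $M\otimes V$ is precisely the nontrivial content of the step, and your sketch assumes it rather than proving it. You acknowledge the gap yourself (``the main obstacle I expect is making the `acts locally finitely through the coproduct' step rigorous''), and the way you propose to fill it --- ``handling each tensor factor separately by the classical result for $\gl$'' --- is in fact the same external theorem the paper cites. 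The paper closes the gap by invoking the projective-functor result directly: tensoring with a finite-dimensional module restricts to the subcategory of modules with locally finite $Z_k$-action (Bernstein--Gelfand, Corollary~2.6, or Kostant, Theorem~5.1), applied to each $Z_k\subset\Gamma_0$ in the chain. Without that citation (or an honest reproduction of its proof), your argument for local finiteness is not a proof, only a plausibility sketch.
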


\begin{proof} First, note that $F_V(M)$ is finitely generated for any $M\in \mathcal M(\Gamma_0)$.
  Indeed, it suffices to check this for a cyclic module $M$. But in this case $M$ is a quotient of a free module $U(\g_0)$. Then
  $M\otimes V$ is a quotient of a free module of rank $\dim V$ and hence finitely generated.
  
Let $Z_k$ be the center of $U(\gl(k))$ and suppose that $Z_k\subset\Gamma_0$.  To prove the statement it is sufficient to show that 
$Z_k$ acts locally finitely on $F_V(M)$. Consider both $M$ and $V$ as $\gl(k)$-modules. 
Tensor product with $V$ defines a projective functor on the category of $\gl(k)$-modules which restricts to the functor 
on the subcategory 
with locally finite action of the center $Z_k$ by [\cite{BG}, Corollary 2.6] or [\cite{K1}, Theorem 5.1]. Since $Z_k$ is locally finite on $M$, it will also 
act   locally finitely on $F_V(M)$. 
In particular, $Z(\g_{\bar{0}})$ acts locally finitely on $V\otimes W$. The statement follows.


\end{proof}

Gelfand-Tsetlin theory for $\gl(n)$ was developed in \cite{O}. The results can be easily extended to Gelfand-Tsetlin modules over $\g_{\bar{0}}$.

\begin{proposition}\label{prop-fin-mult} Let  $V$ be some finite-dimensional $\g$-module.
\item[(i)] Let $\chi\in\hat{\Gamma}_0$. There exists a finite set $S(\chi)\subset\hat{\Gamma}_0$ such that for any Gelfand-Tsetlin module
  $M$ we have
  $$M(\chi)\otimes V\subset\bigoplus_{\theta\in S(\chi)}[M\otimes V](\theta).$$

\item[(ii)] For every $\chi\in\hat{\Gamma}_0$ the set $S^{-1}(\chi):=\{\theta\mid \chi\in S(\theta)\}$ is finite.
  
\item[(iii)] If $\dim M(\chi)<\infty$ for all $\chi\in\hat{\Gamma}_0$ then   $\dim [M\otimes V](\theta)<\infty$  for all $\theta\in\hat{\Gamma}_0$
  \end{proposition}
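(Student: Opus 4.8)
The plan is to exploit the finiteness already built into the classical Gelfand-Tsetlin theory for $\gl(k)$ developed in \cite{O}, together with the explicit combinatorics of the action of a Gelfand-Tsetlin subalgebra on a tensor product. Since $\Gamma_0\simeq\Gamma(m)\otimes\Gamma(n)$ by the Proposition above, a character $\chi\in\hat\Gamma_0$ is a pair $(\chi^{(m)},\chi^{(n)})$ of characters of Gelfand-Tsetlin subalgebras of $U(\gl(m))$ and $U(\gl(n))$; it therefore suffices to treat each factor separately, so I would reduce everything to a single $\gl(k)$ and its Gelfand-Tsetlin subalgebra $\Gamma(k)$ generated by the centers $Z_1\subset Z_2\subset\cdots\subset Z_k$.

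First I would establish (i). Write $V$ as a $\gl(k)$-module; tensoring with $V$ is a projective functor, and for each $j$ the center $Z_j$ of $U(\gl(j))$ acts on $M(\chi)\otimes V$ through finitely many generalized central characters, because $Z_j$ already acts through a single generalized character on $M(\chi)$ (namely the one determined by the restriction of $\chi$ to $Z_j$) and tensoring a module with fixed generalized $Z_j$-central character by the finite-dimensional $\gl(j)$-module $\Ind^{\gl(k)}_{\gl(j)}$-restriction of $V$ produces a module supported on finitely many generalized $Z_j$-central characters — this is precisely [\cite{BG}, Cor.~2.6] / [\cite{K1}, Thm.~5.1], exactly as in the proof of Theorem \ref{g0-GT-finite-length}. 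Taking the (finite) product over $j=1,\dots,k$ of these finite sets of possible $Z_j$-characters yields a finite set $S(\chi)\subset\hat\Gamma(k)$, and by construction every generalized eigenvalue of $\Gamma(k)$ occurring on $M(\chi)\otimes V$ lies in $S(\chi)$. The key point making $S(\chi)$ depend only on $\chi$ and $V$, not on $M$, is that the set of $Z_j$-characters arising is governed entirely by the restriction of $\chi$ to $Z_j$ and by the composition factors of the $\gl(j)$-module $V$, both independent of $M$.

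Next, (ii): the relation $\chi\in S(\theta)$ is, at each level $j$, an instance of "the generalized $Z_j$-character $\chi|_{Z_j}$ occurs in the tensor product of a module with generalized $Z_j$-character $\theta|_{Z_j}$ by $V$." By the symmetry of projective functors (tensoring with $V$ and with its dual $V^*$ are biadjoint), $\theta|_{Z_j}$ must then occur in the tensor product of something with generalized $Z_j$-character $\chi|_{Z_j}$ by $V^*$; applying part (i) with $V$ replaced by $V^*$ shows the set of admissible $\theta|_{Z_j}$ is finite, hence $S^{-1}(\chi)$, a subset of a finite product, is finite. Finally (iii) is immediate from (i): if $\dim M(\chi)<\infty$ for all $\chi$, then for fixed $\theta$ we have $[M\otimes V](\theta)\subset\bigoplus_{\chi\,:\,\theta\in S(\chi)}M(\chi)\otimes V$, and the index set $\{\chi:\theta\in S(\chi)\}=S^{-1}(\theta)$ is finite by (ii) while each summand $M(\chi)\otimes V$ is finite-dimensional; so $[M\otimes V](\theta)$ is finite-dimensional.

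The main obstacle I anticipate is making precise, at each fixed level $j$, the passage from "generalized $Z_j$-character $\chi$" to "finitely many generalized $Z_j$-characters after $\otimes V$" uniformly in $M$: one must check that the finite set of target $Z_j$-characters genuinely depends only on $\chi|_{Z_j}$ and the $\gl(j)$-isomorphism class of $V$ (equivalently, on the generalized infinitesimal characters appearing in $V$ as a $\gl(j)$-module), and then control how these level-by-level finite sets assemble into a single finite subset of $\hat\Gamma(k)\cong\hat\Gamma_0$ without the nilpotency orders or the non-semisimplicity of the $\Gamma_0$-action on $M$ interfering. Once the level-wise statement is phrased as an assertion purely about central characters of $\gl(j)$ and projective functors — which it is, by \cite{BG} and \cite{K1} — the assembly is just a finite intersection, and everything goes through.
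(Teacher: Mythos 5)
Your proof is correct and takes essentially the same approach as the paper's: both arguments rest on the Kostant/Bernstein--Gelfand theorem that tensoring by a finite-dimensional module moves a fixed generalized central character into only finitely many generalized central characters, applied at each level $Z_j$ of the Gelfand--Tsetlin chain, with (ii) obtained by the dual statement (the paper phrases it as finiteness of $S^{-1}(\xi)$ for central characters, you phrase it via biadjointness of $-\otimes V$ and $-\otimes V^*$); the paper merely packages the level-by-level analysis as an induction on the rank of $\g_0$ rather than assembling all levels at once. One small caution in (iii): the literal inclusion $[M\otimes V](\theta)\subset\bigoplus_{\chi:\,\theta\in S(\chi)}M(\chi)\otimes V$ need not hold, since the two direct-sum decompositions of $M\otimes V$ (by $\chi$ and by $\theta$) are not compatible; what one actually gets is that the projection of $M\otimes V$ onto $[M\otimes V](\theta)$ along the other $\Gamma_0$-eigenspaces annihilates $M(\chi)\otimes V$ whenever $\theta\notin S(\chi)$, so $[M\otimes V](\theta)$ is the image under this projection of the finite-dimensional space $\bigoplus_{\chi\in S^{-1}(\theta)}M(\chi)\otimes V$, which gives the desired dimension bound.
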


\begin{proof} Let us prove (i). The proof goes by induction on the rank of $\g_0$. We assume that the statement is true for the previous term
  $(\g'_0,\Gamma'_0)$ in the Gelfand-Tsetlin chain. Let $\chi'$ be the restriction of $\chi$ to $\Gamma'_0$ and $\xi$ be the central character obtained by
  restriction of $\chi$ to $Z(\g_0)$. Then $M(\chi)=M(\chi')\cap M(\xi)$. We have
  $$M(\chi)\otimes V=M(\chi')\otimes V\cap M(\xi)\otimes V.$$
  By the Kostant theorem $$M(\xi)\otimes V=\oplus_{\zeta\in S(\xi)}[M\otimes V](\zeta)$$
  for some finite set $S(\xi)$. Hence
  $$M(\chi)\otimes V\subset \bigoplus_{\zeta\in S(\xi),\theta'\in S(\chi')}[M\otimes V](\zeta)\cap [M\otimes V](\theta')=\bigoplus_{\theta\in S(\chi)}[M\otimes V](\theta),$$
  where $S(\chi)$ consists of all characters $\theta$ such that its restriction to $\Gamma'_0$ lies in $S(\theta')$ and the restriction to $Z(\g_0)$ lies in $S(\xi)$.

  Now (ii) is a consequence of the proof of (i) since $S^{-1}(\xi)$ is finite for any central character $\xi$.

  Finally, (iii) follows immediately for (ii) and (i).
  \end{proof}

\subsection{Gelfand-Tsetlin $\g$-modules }
Let $\Gamma$ be a Gelfand-Tsetlin subalgebra of $\g=\mathfrak{gl}(m|n)$, $\g=\g_{-1}\oplus \g_{0}\oplus \g_{1}$.  We will denote by $\hat{\Gamma}=Hom(\Gamma, \C)$ the space of characters of $\Gamma_0$.

A finitely generated $\g$-module $M$  is called a {\em
Gelfand-Tsetlin module\/} (with respect to $\Gamma$) 
if
\begin{equation*}
M=\underset{\chi\in \hat{\Gamma}}{\bigoplus}M({\chi}), \end{equation*} where \begin{equation*} M({\chi}) \ = \ \{ x\in M\ | \  (z-\chi(z))^k x
=0\quad \text{for some}\quad k\geqslant 0  \quad \text{and all} \quad  z\in \Gamma   \}. \end{equation*}

Denote by $\mathcal M(\Gamma)$ the category of Gelfand-Tsetlin modules  over $\g$ with respect to the subalgebra $\Gamma$.
In particular, all finite dimensional $\g$-modules belong to $\mathcal M(\Gamma)$. Let $\mathcal F(\Gamma)$ be the category of $\g$-modules with locally
finite action of $\Gamma$ and $\mathcal F_{\g}(\Gamma_0)$ be the category of $\g$-modules with locally finite action of $\Gamma_0$. Obviously, $\mathcal M(\Gamma)$ 
is a full subcategory of $\mathcal F(\Gamma)$. Moreover,
every object of $\mathcal F(\Gamma)$ is a direct limit of objects in $\mathcal M(\Gamma)$.

\begin{lemma}\label{lem-center0-center} If $Z(\g_0)$ acts locally finitely on a $\g$-module $M$ then $Z(\g)$ acts locally finitely on $M$. 

\end{lemma}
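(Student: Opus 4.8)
The plan is to exploit the Harish-Chandra-type homomorphism $\phi: U(\g)^d\to U(\g_0)$ together with the fact that $Z(\g)\subset U(\g)^d$ and the quantum Berezinian description of the image $\phi(Z(\g))$. The starting observation is that $\phi$ restricted to $Z(\g)$ is an algebra homomorphism into $Z(\g_0)$, and by the discussion at the end of Section~2 this is injective. So the obstacle is not injectivity but rather the passage from "$Z(\g_0)$ acts locally finitely" to "$Z(\g)$ acts locally finitely", which is a statement about how $Z(\g)$ sits inside $U(\g)$ relative to $Z(\g_0)\subset U(\g_0)$. The key point is that $U(\g)$ is a finitely generated module over the subalgebra generated by $U(\g_0)$ together with $Z(\g)$, or more precisely, that a $\g$-module with locally finite $Z(\g_0)$-action decomposes, as a $Z(\g)$-module, into pieces controlled by finitely many central characters per $Z(\g_0)$-character.

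First I would reduce to the case where $M$ is cyclic, generated by a single vector $v$ lying in a single generalized $Z(\g_0)$-eigenspace; since $M=\bigcup_n U_n(\g)\cdot M'$ for finite-dimensional generating subspaces $M'$ contained in finitely many $Z(\g_0)$-eigenspaces, and each such piece is finitely generated, it suffices to handle the cyclic case and show the $Z(\g)$-orbit of $v$ (i.e.\ $Z(\g)v$) lies in finitely many generalized $Z(\g)$-eigenvectors. Second I would use the PBW-type decomposition $U(\g)=U(\g_0)\oplus(\g_{-1}U(\g)+U(\g)\g_1)$ together with the consistent $\mathbb Z$-grading: because $\g_{-1}$ and $\g_1$ are finite-dimensional and odd (hence nilpotent on any module in a precise sense — $\Lambda^{\dim\g_1+1}\g_1=0$), a vector $v$ generates, under $U(\g)$, a module that as a $U(\g_0)$-module is a subquotient of $U(\g_{-1})\otimes U(\g_0)\otimes U(\g_1)\otimes(\text{cyclic }U(\g_0)\text{-module})$, and $U(\g_{\pm1})$ are finite-dimensional. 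Hence if $Z(\g_0)$ acts with one generalized eigenvalue on a cyclic generator over $U(\g_0)$, it acts with finitely many generalized eigenvalues on the whole $\g$-module — this is essentially the content needed.

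Third, and this is where I expect the real work, I would identify $Z(\g)$ inside $U(\g_0)\rtimes(\text{finite data})$ using $\phi$: for $z\in Z(\g)$ and any $x\in M$, write $zx=\phi(z)x+(\text{correction in }(\g_{-1}U(\g)+U(\g)\g_1)x)$. Since $\phi(z)\in Z(\g_0)$ acts locally finitely (by hypothesis together with the previous step applied to $\g_{-1}M$ and $\g_1M$ — note $\g_{\pm1}M$ is again a $\g$-module only after closing up, but $\g_{-1}U(\g)+U(\g)\g_1$ annihilates things after finitely many steps because of the grading element $d$: iterating lowers or the odd part eventually vanishes), the correction terms are controlled. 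The cleanest way to make this precise: the subalgebra $\Gamma_0$-locally-finite condition gives, via Lemma on $F_V$ (Theorem~\ref{g0-GT-finite-length}) or directly, that $M$ restricted to $\g_0$ lies in a category stable under tensoring with finite-dimensional modules; and $U(\g)$ as a $\g_0$-bimodule is a finite sum of copies of finite-dimensional $\g_0$-modules (namely $\Lambda\g_{-1}\otimes\Lambda\g_1$) times $U(\g_0)$. Therefore $M$ is, as a $\g_0$-module, of the form $(\text{f.d.})\otimes(\Gamma_0\text{-locally finite})$, so $Z(\g_0)$ — and a fortiori $Z(\g)$ via $\phi$ and the correction bound — acts locally finitely. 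The main obstacle is bookkeeping the correction terms $\phi(z)-z\in\g_{-1}U(\g)+U(\g)\g_1$ and verifying they only enlarge the set of generalized eigenvalues by a finite amount; this is where one invokes the finite-dimensionality of $\g_{\pm1}$ and the $d$-grading decisively, so that only finitely many "layers" contribute.
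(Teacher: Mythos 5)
Your intuition points in roughly the right direction (relate $Z(\g)$ to $Z(\g_0)$ via $\phi$, exploit the nilpotency of the odd parts), but the argument as written has a genuine gap at exactly the place you flag as "where I expect the real work." You never actually establish why the correction terms $z-\phi(z)\in\g_{-1}U(\g)+U(\g)\g_1$ can be ignored. Your stated reason — "iterating lowers or the odd part eventually vanishes because of the grading element $d$" — does not work: $z-\phi(z)$ has $d$-degree zero, so applying it repeatedly does not descend through the $\mathbb Z$-grading, and nothing vanishes. Likewise, the final sentence "$Z(\g_0)$ — and a fortiori $Z(\g)$ via $\phi$ and the correction bound — acts locally finitely" is precisely the statement of the lemma, not a proof of it: local finiteness of $\phi(z)$ gives no a priori control on $z$.

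The idea you are missing is to pass to the $\g_1$-invariants. For $v\in M^{\g_1}$ one has $zv=\phi(z)v$ exactly (the correction $z-\phi(z)$ is a $d$-degree-zero element of $\g_{-1}U(\g)+U(\g)\g_1$, hence in fact lies in $U(\g)\g_1$ and kills $v$). So $Z(\g)$ acts locally finitely on $M^{\g_1}$ because $\phi(Z(\g))\subset Z(\g_0)$ does, by hypothesis; and since $Z(\g)$ is central, it then acts locally finitely on the whole $\g$-submodule $U(\g)M^{\g_1}$. The paper then bootstraps this by the finite filtration $F^i(M)/F^{i-1}(M)=U(\g)(M/F^{i-1}M)^{\g_1}$, which exhausts $M$ in at most $2^{mn}$ steps because $U(\g_1)=\Lambda(\g_1)$ is finite-dimensional. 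That finite exhaustion is the precise substitute for your informal "iterating ... eventually vanishes." Your tensor-product observations (that $M$ restricted to $\g_0$ is a subquotient of $\Lambda(\g_{-1})\otimes N\otimes\Lambda(\g_1)$, and that this preserves local finiteness of $Z(\g_0)$) are true and in fact used elsewhere in the paper, but by themselves they only control $Z(\g_0)$, not $Z(\g)$, and so do not close the gap.
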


\begin{proof} 
  Let $N:=M^{\g_1}$. Obviously, $N$ is a $\g_0$-submodule of $M$. Since $zv=\phi(z)v$ for any $z\in Z(\g)$ and any $v\in N$, and $\phi(Z(\g))\subset Z(\g_0)$,
  we obtain that 
 $Z(\g)$ acts  locally finitely on  $N$ and hence on $U(\g)N$. 

  Consider a filtration
  $$0\subset F^1(M)\subset F^2(M)\subset \dots\subset F^k(M)\subset \dots,$$
  defined inductively by
  $$F^i(M)/F^{i-1}M=U(\g)(M/F^{i-1}M)^{\g_1}.$$
  By repeating the above argument we get that $Z(\g)$ acts locally finitely on $F^i(M)$ for all $i$.

  On the other hand, $\dim U(\g_1)v\leq 2^{mn}$ for any $v\in M$, hence $M=\cup_{i=1}^{2^{mn}} F^i(M)$ and the statement follows.

\end{proof}

\begin{theorem}\label{thm-gamma0-gamma} Let $\Gamma$ be a Gelfand-Tsetlin subalgebra of $U(\g)$ such that
$\Gamma_0=\phi(\Gamma)$. Then 
  $\mathcal F(\Gamma)=\mathcal F_{\g}(\Gamma_0)$ and $\mathcal M(\Gamma)=\mathcal M_{\g}(\Gamma_0)$ where $\mathcal F_{\g}(\Gamma_0)$ (resp.,$\mathcal M_{\g}(\Gamma_0)$)
  is the category of $\g$-modules (resp., finitely generated $\g$-modules) locally finite over $\Gamma_0$.
\end{theorem}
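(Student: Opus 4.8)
The plan is to prove the two equalities $\mathcal F(\Gamma)=\mathcal F_{\g}(\Gamma_0)$ and $\mathcal M(\Gamma)=\mathcal M_{\g}(\Gamma_0)$ simultaneously, the second being a refinement of the first by the finite-generation condition, which is the same on both sides. Since $\phi(\Gamma)=\Gamma_0$ by Proposition \ref{prop-HC-surj}, and $\Gamma\subset U(\g)^d$, the two subalgebras $\Gamma$ and $\Gamma_0$ are related by the projection $\phi$; the whole point is that local finiteness of one implies local finiteness of the other on any $\g$-module.

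First I would prove $\mathcal F(\Gamma)\subseteq\mathcal F_{\g}(\Gamma_0)$: this is the easy inclusion. Fix a member $\g_{k,l}$ of the Gelfand-Tsetlin chain, so that $Z_{k,l}=Z(\g_{k,l})$ sits inside $\Gamma$ and its image $\phi_{k,l}(Z_{k,l})$ — which by Proposition \ref{prop-HC-surj} applied to $\g_{k,l}$ generates, together with $Z((\g_{k,l})_0)$, nothing more than we need — lies inside $Z((\g_{k,l})_0)\subseteq\Gamma_0$. More precisely, $\Gamma_0$ is generated by the $Z((\g_{k,l})_0)$, and by the Lemma inside the proof of Proposition \ref{prop-HC-surj}, $Z((\g_{k,l})_0)$ is generated by $Z(\gl(l))$ (or $Z(\gl(k))$) and $\phi(Z_{k,l})$. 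So it suffices to show $\Gamma$ acts locally finitely $\Rightarrow$ each such generator acts locally finitely. For the $\phi(Z_{k,l})$ part: on the $\g_{k,l}$-submodule $N=M^{(\g_{k,l})_1}$ one has $zv=\phi(z)v$ for $z\in Z_{k,l}$, and repeating the filtration argument of Lemma \ref{lem-center0-center} (applied to the subalgebra $\g_{k,l}$ in place of $\g$) propagates local finiteness of $\phi(Z_{k,l})$ from $N$ to all of $M$; local finiteness of $Z_{k,l}$ on $M$ is given since $Z_{k,l}\subset\Gamma$. The $Z(\gl(l))$ part follows by induction on the rank, since $Z(\gl(l))\subset\Gamma'$ for a smaller member $\g'$ of the chain and $\Gamma'\subset\Gamma$.

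For the reverse inclusion $\mathcal F_{\g}(\Gamma_0)\subseteq\mathcal F(\Gamma)$: assume $\Gamma_0$ acts locally finitely on $M$. I need each generator of $\Gamma$ to act locally finitely. The generators of $\Gamma$ are the coefficients of elements of $Z_{k,l}$; it is enough to treat $Z(\g)=Z_{m,n}$ itself and then argue by induction on the rank for the proper members $\g_{k,l}$ (whose $\Gamma$-subalgebra is $\Gamma'\subset\Gamma$ with $\phi(\Gamma')=\Gamma'_0\subset\Gamma_0$, so $\Gamma'_0$ is locally finite on $M$ too, and induction applies inside $U(\g_{k,l})$ — here one uses that $M$ restricted to $\g_{k,l}$ still satisfies the hypothesis). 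For $Z(\g)=Z_{m,n}$: local finiteness of $\Gamma_0$ on $M$ gives in particular local finiteness of $Z(\g_0)$ on $M$, and then Lemma \ref{lem-center0-center} gives local finiteness of $Z(\g)$ on $M$. This closes the induction and yields $\mathcal F_{\g}(\Gamma_0)=\mathcal F(\Gamma)$. Intersecting both sides with the class of finitely generated $\g$-modules gives $\mathcal M(\Gamma)=\mathcal M_{\g}(\Gamma_0)$, since finite generation over $\g$ is a condition that does not refer to either $\Gamma$ or $\Gamma_0$, and by definition $\mathcal M(\Gamma)$ is the full subcategory of $\mathcal F(\Gamma)$ of finitely generated modules.

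The main obstacle I expect is bookkeeping the induction cleanly: one must verify that the restriction of $M$ to each smaller superalgebra $\g_{k,l}$ in the chain still lies in the relevant category, i.e.\ that local finiteness over $\Gamma_0$ (a subalgebra of $U(\g_0)$) implies local finiteness over the smaller $(\Gamma_{k,l})_0$, and likewise on the $\Gamma$-side; this is true because $(\Gamma_{k,l})_0\subset\Gamma_0$ and $\Gamma_{k,l}\subset\Gamma$ as subsets of the respective enveloping algebras. The other delicate point is making sure the filtration argument of Lemma \ref{lem-center0-center} really does apply verbatim with $\g_{k,l}$ replacing $\g$: this requires that $\g_{k,l}=\gl(k|l)$ carries the same consistent $\bZ$-grading $(\g_{k,l})_{-1}\oplus(\g_{k,l})_0\oplus(\g_{k,l})_1$ and that $\dim U((\g_{k,l})_1)v$ is bounded by $2^{kl}$, which it is; with that in hand the argument is the same finite union of finite filtration terms. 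Everything else is a direct combination of Proposition \ref{prop-HC-surj}, its internal Lemma, and Lemma \ref{lem-center0-center}.
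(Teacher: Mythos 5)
Your overall architecture (filtration, induction on the chain, Lemma~\ref{lem-center0-center}) is close to the paper's, and your treatment of the inclusion $\mathcal F_{\g}(\Gamma_0)\subseteq\mathcal F(\Gamma)$ and of the finitely-generated statement is correct. But your treatment of the inclusion $\mathcal F(\Gamma)\subseteq\mathcal F_{\g}(\Gamma_0)$ has a genuine gap — and, contrary to your expectation, this is the \emph{harder} direction, not the easy one.

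The gap is in the sentence ``repeating the filtration argument of Lemma~\ref{lem-center0-center} \ldots propagates local finiteness of $\phi(Z_{k,l})$ from $N$ to all of $M$.'' The propagation step in Lemma~\ref{lem-center0-center} works precisely because the elements being propagated lie in $Z(\g_{k,l})$, which is \emph{central} in $U(\g_{k,l})$: for $z\in Z(\g_{k,l})$, $u\in U(\g_{k,l})$ and $v\in N$ one has $z(uv)=u(zv)$, so local finiteness on $N$ automatically passes to $U(\g_{k,l})N$. By contrast $\phi(Z_{k,l})$ sits in $Z\bigl((\g_{k,l})_0\bigr)\subset U\bigl((\g_{k,l})_0\bigr)$ and is \emph{not} central in $U(\g_{k,l})$ — it does not commute with the odd part — so the same propagation fails. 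Merely establishing local finiteness of $\phi(Z_{k,l})$ on the invariants $N=M^{(\g_{k,l})_1}$ (which you do correctly) does not by itself give it on $U(\g_{k,l})N$.

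What the paper actually uses here is the different, crucial observation that each filtration quotient $F_i = F^i(M)/F^{i-1}(M)$, regarded as a $\g_0$-module, is a quotient of $\Lambda(\g_{-1})\otimes\bigl(M/F^{i-1}M\bigr)^{\g_1}$ (a PBW consequence). Since $\Lambda(\g_{-1})$ is a finite-dimensional $\g_0$-module, Theorem~\ref{g0-GT-finite-length} — the projective-functor statement that tensoring with a finite-dimensional module preserves local finiteness over $\Gamma_0$ — then yields local finiteness of all of $\Gamma_0$ on each $F_i$ at once, and the finite length of the filtration finishes the argument. Your proposal nowhere invokes Theorem~\ref{g0-GT-finite-length}, and without it there is no way to move non-central elements across $U(\g_{-1})$. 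You also do not need the generator-by-generator splitting into $Z(\gl(l))$ and $\phi(Z_{k,l})$ coming from the internal Lemma of Proposition~\ref{prop-HC-surj}: once you have the tensor-product structure on the $F_i$ and Theorem~\ref{g0-GT-finite-length}, the local finiteness of $\Gamma_0$ on each $F_i$ comes for free and the extra bookkeeping can be dropped.
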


\begin{proof} Suppose $M\in\mathcal F(\Gamma)$. Consider the filtration $F^i(M)$ defined in the proof of Lemma \ref{lem-center0-center}.
  To prove that $M\in\mathcal F_{\g}(\Gamma_0)$ it suffices to show that $\Gamma_0$ acts locally finitely on $F_i:=F^i(M)/F^{i-1}M$.
  Indeed, consider $F_i$ as a $\g_0$-module. Then $F_i$ is a quotient of $\Lambda(\g_{-1})\otimes (M/F^{i-1}M)^{\g_1}$. For any $\gamma\in\Gamma$ and
  $v\in(M/F^{i-1}M)^{\g_1}$ we get $\gamma v=\phi(\gamma)v$. Since $\phi:\Gamma\to\Gamma_0$ is surjective by Proposition \ref{prop-HC-surj}, we obtain that
  $\Gamma_0$ acts locally finitely on $(M/F^{i-1}M)^{\g_1}$. Furthermore, $F^i(M)/F^{i-1}M$ considered as a $\g_0$-module is a quotient of
  $\Lambda(\g_{-1})\otimes (M/F^{i-1}M)^{\g_1}$. Hence by Theorem \ref{g0-GT-finite-length} we obtain that $\Gamma_0$ acts locally finitely on $F^i(M)/F^{i-1}M$.

  Suppose now that $M\in\mathcal F_{\g}(\Gamma_0)$. By Lemma \ref{lem-center0-center} we get that $Z(\g_{k,l})$ acts locally finitely on $M$.
  Therefore $\Gamma$ acts locally finitely on $M$.

  The second assertion is a consequence of the obvious fact that $U(\g)$ is a finitely generated as $U(\g_0)$-module and hence a $\g$-module
  $M$ is finitely generated over $U(\g_0)$ if and only if it is finitely generated over
  $U(\g)$. 
\end{proof}

As a consequence we obtain the following super analog of Theorem \ref{g0-GT-finite-length}.

\begin{corollary}\label{g-GT-finite-length}
Let $V$ be a  finite dimensional $\g$-module. For $M\in \mathcal F(\Gamma)$ set
$\hat{F}_V(M)=M\otimes V$. Then the correspondence
 $\hat{F}_V: M \rightarrow \hat{F}_V(M)$ defines a functor on the category $\mathcal F(\Gamma)$.
The restriction of $\hat{F}_V$ on $\mathcal M(\Gamma)$ defines a functor on the category $\mathcal M(\Gamma)$.
\end{corollary}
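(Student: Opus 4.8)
The plan is to deduce Corollary~\ref{g-GT-finite-length} from Theorem~\ref{thm-gamma0-gamma} together with the already-established even-part statements, Theorem~\ref{g0-GT-finite-length} and Proposition~\ref{prop-fin-mult}. The point is that Theorem~\ref{thm-gamma0-gamma} lets us replace ``locally finite over $\Gamma$'' by the more flexible ``locally finite over $\Gamma_0$'', and local finiteness over $\Gamma_0$ is tested one even-block center $Z_k$ at a time, where the classical machinery applies.

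First I would show $\hat{F}_V$ preserves $\mathcal F(\Gamma)$. Let $M\in\mathcal F(\Gamma)$; by Theorem~\ref{thm-gamma0-gamma}, $M\in\mathcal F_{\g}(\Gamma_0)$, i.e. $\Gamma_0$ acts locally finitely on $M$. Restrict $V$ to $\g_0$; it is still finite dimensional. Viewing $M\otimes V$ as a $\g_0$-module and applying Theorem~\ref{g0-GT-finite-length} (or, more precisely, the tensor-product-with-finite-dimensional argument inside its proof, which shows each $Z_k\subset\Gamma_0$ stays locally finite under $-\otimes V$), we get that $\Gamma_0$ acts locally finitely on $M\otimes V$, so $M\otimes V\in\mathcal F_{\g}(\Gamma_0)=\mathcal F(\Gamma)$ by Theorem~\ref{thm-gamma0-gamma} again. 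Functoriality (compatibility with $\g$-module maps) is immediate since $-\otimes V$ is an exact additive functor on all $\g$-modules and the generalized weight decomposition is natural.

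Next I would handle the restriction to $\mathcal M(\Gamma)$, which requires preservation of finite generation. Here I would copy the opening paragraph of the proof of Theorem~\ref{g0-GT-finite-length}: it suffices to treat a cyclic $M$, which is a quotient of $U(\g)$; then $M\otimes V$ is a quotient of $U(\g)\otimes V$, which is free of rank $\dim V$ over $U(\g)$, hence $M\otimes V$ is finitely generated over $\g$. Combined with the previous paragraph and Theorem~\ref{thm-gamma0-gamma}'s identification $\mathcal M(\Gamma)=\mathcal M_{\g}(\Gamma_0)$, this shows $\hat{F}_V$ restricts to an endofunctor of $\mathcal M(\Gamma)$.

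I do not expect a serious obstacle; the content has already been front-loaded into Theorem~\ref{thm-gamma0-gamma}, Theorem~\ref{g0-GT-finite-length}, and Proposition~\ref{prop-fin-mult}. The only mild subtlety is making sure that ``$\Gamma_0$ locally finite'' really does propagate through $-\otimes V$: this is exactly the statement that each even block center $Z_k$ acts locally finitely, which follows block by block from the projective-functor result ([\cite{BG}, Corollary 2.6] or [\cite{K1}, Theorem 5.1]) already invoked in the proof of Theorem~\ref{g0-GT-finite-length}, applied to $\g_0\simeq\gl(m)\oplus\gl(n)$ and its subalgebras in the Gelfand-Tsetlin chain. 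If one wants finiteness of generalized weight multiplicities preserved as well, that is recorded separately in Proposition~\ref{prop-fin-mult}(iii), but it is not needed for the bare functoriality claimed here.
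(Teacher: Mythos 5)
Your proof is correct and follows the same route as the paper: sandwich Theorem~\ref{g0-GT-finite-length} between two applications of Theorem~\ref{thm-gamma0-gamma} to move between $\mathcal F(\Gamma)$ and $\mathcal F_{\g}(\Gamma_0)$, with $V$ restricted to $\g_0$ in the middle step. You spell out the finite-generation check for the $\mathcal M(\Gamma)$ part, which the paper leaves implicit, but the argument is the same.
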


\begin{proof}
Indeed, we have $M\in \mathcal F(\Gamma)=\mathcal F(\Gamma_0)$ by Theorem \ref{thm-gamma0-gamma}. Hence $\hat{F}_V(M)\in \mathcal F(\Gamma_0)$ by Theorem \ref{g0-GT-finite-length}. Applying Theorem \ref{thm-gamma0-gamma} again we conclude that $\hat{F}_V(M)\in \mathcal F(\Gamma)$.
\end{proof}

We also have the following surprizing result

\begin{corollary}\label{cor-indep-gamma}
If $\Gamma$ and $\Gamma'$ are two Gelfand-Tsetlin subalgebras of $\g$ such that  $\phi(\Gamma)=\phi(\Gamma')$, 
 then
$\mathcal F(\Gamma)=\mathcal F(\Gamma')$.
\end{corollary}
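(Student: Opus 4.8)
The plan is to reduce the statement to Theorem \ref{thm-gamma0-gamma}, which is the real engine here. That theorem says that for any Gelfand-Tsetlin subalgebra $\Gamma$ of $U(\g)$, the category $\mathcal F(\Gamma)$ of $\g$-modules with locally finite action of $\Gamma$ coincides with the category $\mathcal F_{\g}(\Gamma_0)$ of $\g$-modules with locally finite action of $\Gamma_0 = \phi(\Gamma)$. The point is that $\mathcal F_{\g}(\Gamma_0)$ depends only on the subalgebra $\Gamma_0 \subset U(\g_0)$ and not on how it was obtained as a projection; so if $\phi(\Gamma) = \phi(\Gamma')$, both sides are governed by the same $\Gamma_0$.

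Concretely, I would argue as follows. Let $\Gamma_0 := \phi(\Gamma) = \phi(\Gamma')$. By Proposition \ref{prop-HC-surj} applied to each chain, $\Gamma_0$ is a genuine Gelfand-Tsetlin subalgebra of $U(\g_0)$ (equal to the corresponding $\Gamma_0$ built from the induced even chain in both cases). Now apply Theorem \ref{thm-gamma0-gamma} to $\Gamma$: it gives $\mathcal F(\Gamma) = \mathcal F_{\g}(\Gamma_0)$. Apply it again to $\Gamma'$: since $\phi(\Gamma') = \Gamma_0$ as well, it gives $\mathcal F(\Gamma') = \mathcal F_{\g}(\Gamma_0)$. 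Comparing the two equalities yields $\mathcal F(\Gamma) = \mathcal F(\Gamma')$, which is the claim. This is essentially a one-line deduction once Theorem \ref{thm-gamma0-gamma} is in hand.

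There is really no serious obstacle, since all the work has been done in Theorem \ref{thm-gamma0-gamma}; the only thing to be careful about is that the hypothesis of that theorem is exactly ``$\Gamma_0 = \phi(\Gamma)$'', which is automatically satisfied here by our definition of $\Gamma_0$ together with Proposition \ref{prop-HC-surj} guaranteeing surjectivity onto the even Gelfand-Tsetlin subalgebra. One should also note, for the reader's benefit, that the same argument gives $\mathcal M(\Gamma) = \mathcal M(\Gamma')$ for the subcategories of finitely generated (i.e.\ Gelfand-Tsetlin) modules, since Theorem \ref{thm-gamma0-gamma} also identifies $\mathcal M(\Gamma) = \mathcal M_{\g}(\Gamma_0)$; this is in fact the content of Theorem \ref{theorem-main}(iv). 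I would write the proof in that spirit: state $\Gamma_0 := \phi(\Gamma) = \phi(\Gamma')$, invoke Theorem \ref{thm-gamma0-gamma} twice, and conclude.
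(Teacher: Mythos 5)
Your proof is correct and follows exactly the paper's own argument: apply Theorem \ref{thm-gamma0-gamma} to both $\Gamma$ and $\Gamma'$, observe that both give $\mathcal F_{\g}(\Gamma_0)$ for the same $\Gamma_0 = \phi(\Gamma) = \phi(\Gamma')$, and conclude. Nothing to add beyond noting that the paper's one-line chain of equalities $\mathcal F(\Gamma)=\mathcal F(\phi(\Gamma))=\mathcal F(\phi(\Gamma'))=\mathcal F(\Gamma')$ is a compressed version of what you wrote.
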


\begin{proof}
Indeed, by Theorem \ref{thm-gamma0-gamma} we have $$\mathcal F(\Gamma)=\mathcal F(\phi(\Gamma))=\mathcal F(\phi(\Gamma'))= \mathcal F(\Gamma').$$
\end{proof}

Let $M$ be a $\g$-module. We will say that $v\in M$ is a
Gelfand-Tsetlin vector with respect to $\Gamma$ of weight $\chi$
if $v\in M(\chi)$. We have the following key property of Gelfand-Tsetlin modules.

\begin{proposition}\label{prop-GTs}
Let $M$ be a $\g$-module, $\Gamma$ a Gelfand-Tsetlin
subalgebra of $U(\g)$, $\chi\in \hat{\Gamma}$, $M(\chi)\neq 0$. Assume that $M$ is generated by $M(\chi)$.
 Then $M\in \mathcal F(\Gamma)$. Furthermore, if $\dim M(\chi)<\infty$ then $M$ is a Gelfand-Tsetlin module with respect to
$\Gamma$.

\end{proposition}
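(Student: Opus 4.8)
The plan is to deduce both assertions from the machinery already in place, in particular Theorem \ref{thm-gamma0-gamma}, Theorem \ref{g0-GT-finite-length}, Proposition \ref{prop-fin-mult}, Proposition \ref{prop-HC-surj}, and the filtration $F^\bullet(M)$ from Lemma \ref{lem-center0-center}. First I would reduce to the even algebra. Since $M$ is generated over $U(\g)$ by the finite-dimensional-or-not subspace $M(\chi)$, and $U(\g)$ is finitely generated as a $U(\g_0)$-module, $M$ is finitely generated over $U(\g_0)$; so for the second assertion it will suffice, by Theorem \ref{thm-gamma0-gamma}, to prove that $\Gamma_0$ decomposes $M$ into generalized eigenspaces with finite-dimensional pieces, and for the first it will suffice to prove $\Gamma_0$ acts locally finitely on $M$.

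Next I would pass through $R(M)=M^{\g_1}$. Because $\phi:\Gamma\to\Gamma_0$ is surjective (Proposition \ref{prop-HC-surj}) and $\gamma$ acts on $v\in M^{\g_1}$ as $\phi(\gamma)$, the subspace $M(\chi)$ sits inside $M^{\g_1}$-parts carrying the character $\phi^*{}^{-1}$-image of $\chi$ on $\Gamma_0$ (more precisely, $\Gamma_0$ acts on $M(\chi)\cap M^{\g_1}$ through a single generalized character $\chi_0$ determined by $\chi$). The point is that $M=U(\g)\cdot M(\chi)=U(\g_{-1})\cdot\big(U(\g_0)M(\chi)\big)$ up to the filtration: running the $F^i$ construction of Lemma \ref{lem-center0-center}, each layer $F^i/F^{i-1}$ is, as a $\g_0$-module, a quotient of $\Lambda(\g_{-1})\otimes \big(M/F^{i-1}\big)^{\g_1}$, and $\big(M/F^{i-1}\big)^{\g_1}$ is generated as a $\g_0$-module by the image of $M(\chi)$ together with finitely many $\Gamma_0$-characters produced by the central characters occurring in the layers below (this is exactly where Proposition \ref{prop-fin-mult}(i),(ii) enters — tensoring by the finite-dimensional module $\Lambda(\g_{-1})$ spreads a single $\Gamma_0$-character to a finite set). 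Hence each $\big(M/F^{i-1}\big)^{\g_1}$ is a Gelfand-Tsetlin $\g_0$-module in $\mathcal M(\Gamma_0)$ when $\dim M(\chi)<\infty$, and a locally finite $\Gamma_0$-module in general; this is the inductive heart of the argument.

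Then I would tensor up: $\Lambda(\g_{-1})$ is a finite-dimensional $\g_0$-module, so by Theorem \ref{g0-GT-finite-length} (and Proposition \ref{prop-fin-mult}(iii) for the finiteness of multiplicities) $\Lambda(\g_{-1})\otimes\big(M/F^{i-1}\big)^{\g_1}$ lies in $\mathcal M(\Gamma_0)$ (resp. is $\Gamma_0$-locally finite), and the same holds for its quotient $F^i/F^{i-1}$. Since $M=\bigcup_{i=1}^{2^{mn}}F^i(M)$ is a finite union, $\Gamma_0$ acts locally finitely on all of $M$ with finite-dimensional generalized eigenspaces in the $\dim M(\chi)<\infty$ case; now Theorem \ref{thm-gamma0-gamma} transports this back to $\Gamma$, giving $M\in\mathcal F(\Gamma)$ in general and $M\in\mathcal M(\Gamma)$ when $\dim M(\chi)<\infty$ (finite generation over $U(\g)$ having been noted at the start).

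The main obstacle is bookkeeping the characters through the filtration: one must check that the set of $\Gamma_0$-characters appearing in $\big(M/F^{i-1}\big)^{\g_1}$ stays under control — i.e. is contained in the $\Gamma_0$-orbit of $\chi_0$ under the finite "spreading" maps $S(\cdot)$ of Proposition \ref{prop-fin-mult} iterated at most $2^{mn}$ times — and, when $\dim M(\chi)<\infty$, that the corresponding generalized eigenspaces remain finite-dimensional at every layer. Once the recursion $\mathcal S_0=\{\chi_0\}$, $\mathcal S_{i}=\bigcup_{\xi\in\mathcal S_{i-1}}S(\xi)$ is set up and seen to be finite, everything else is an application of the already-proved theorems.
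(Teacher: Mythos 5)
There is a genuine gap. Your plan is to run the filtration $F^\bullet(M)$ from Lemma \ref{lem-center0-center}, show that each layer $F^i/F^{i-1}$ is a $\g_0$-quotient of $\Lambda(\g_{-1})\otimes(M/F^{i-1})^{\g_1}$, and conclude via Theorem \ref{g0-GT-finite-length} that $\Gamma_0$ acts locally finitely on each layer. But this requires $(M/F^{i-1})^{\g_1}$ to already lie in $\mathcal F(\Gamma_0)$, and your justification --- that this space is ``generated as a $\g_0$-module by the image of $M(\chi)$ together with finitely many $\Gamma_0$-characters produced by the layers below'' --- is asserted, not proved, and I don't see how to make it true. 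The hypothesis is that $M(\chi)$ generates $M$ as a $\g$-module; this says nothing about how the $\g_0$-module $M^{\g_1}$ is generated. In fact $M(\chi)$ need not even meet $M^{\g_1}$, and there is no a priori reason $M^{\g_1}$ should be locally finite under $\Gamma_0$. The filtration $F^\bullet$ is the right tool in Lemma \ref{lem-center0-center} and Theorem \ref{thm-gamma0-gamma} precisely because there one \emph{starts} from a local-finiteness hypothesis and transports it through the layers; here you are trying to \emph{establish} local finiteness, so the same filtration gives you no purchase on the bottom layer $M^{\g_1}$ and the argument is circular.

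The paper avoids this by inducting on $m+n$ inside the Gelfand--Tsetlin chain rather than filtering by $\g_1$-invariants. Concretely: let $\g'\subset\g$ be the previous term of the chain and $M':=U(\g')M(\chi)$. Since $M(\chi)\subseteq M'(\chi|_{\Gamma'})$, the module $M'$ satisfies the hypotheses of the proposition for $(\g',\Gamma')$, so the inductive hypothesis gives $M'\in\mathcal F(\Gamma')$. Then $M$ is a quotient of $U(\g)\otimes_{U(\g')}M'$, which as a $\g'$-module is (a directed union of) tensor products of $M'$ with finite-dimensional modules, hence stays in $\mathcal F(\Gamma')$ by the tensor-functor result. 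Finally $Z_{m,n}\subset\Gamma$ is central and acts by a single generalized character on $M(\chi)$, hence on all of $M=U(\g)M(\chi)$; since $\Gamma$ is generated by $\Gamma'$ and $Z_{m,n}$, we get $M\in\mathcal F(\Gamma)$. This rank induction provides exactly the handle your filtration lacks: at each step the ``new'' part of $\Gamma$ is a single central subalgebra whose action is controlled by centrality and the generating condition, while the old part is controlled by induction. If you want to salvage your write-up, replace the $F^\bullet$-filtration with this induction on the chain; the pieces you cite (surjectivity of $\phi$, Theorem \ref{g0-GT-finite-length}, Theorem \ref{thm-gamma0-gamma}) are the right ingredients, they just need to be assembled along the chain rather than along the $\g_1$-socle.
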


\begin{proof} We prove the statement by induction in $m+n$.
  Let $\g'=\g_{m,n-1}$ or $\g_{m-1,n}$ be the previous term in the defining chain.
  By induction assumption $M':=U(\g')M(\chi)\in\mathcal F(\Gamma')$. Furthermore $M$ is a quotient of the induced module
  $U(\g)\otimes_{U(\g')}M'$. The latter is isomorphic to $S(\g/\g')\otimes M'$ as a $\g'$-module. By Corollary \ref{g-GT-finite-length}
  $U(\g)\otimes_{U(\g')}M'\in \mathcal F(\Gamma')$ and hence $M\in \mathcal F(\Gamma')$. On the other hand,
  $Z_{m,n}$ is locally finite on $M$. Hence $M\in  \mathcal F(\Gamma)$.

\end{proof}

\begin{corollary}\label{cor-lifting}
 Let  $M$ be a $\g$-module,   $\phi(\Gamma)=\Gamma_0$,  $\chi_0\in \hat{\Gamma}_0$.
If  $M(\chi_0)\neq 0$  then $M(\chi)\neq 0$ for some $\chi\in \hat{\Gamma}$. If in addition $V$ is generated by $V(\chi_0)$ then $V\in \mathcal F(\Gamma)$.
Furthermore, if $M$ is an irreducible $\g$-module and $M(\chi_0)\neq 0$ then $M\in \mathcal M(\Gamma)$.
\end{corollary}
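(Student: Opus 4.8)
\textbf{Proof plan for Corollary \ref{cor-lifting}.}

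The plan is to exploit the surjectivity of $\phi:\Gamma\to\Gamma_0$ (Proposition \ref{prop-HC-surj}) together with the already established identity $\mathcal F(\Gamma)=\mathcal F_{\g}(\Gamma_0)$ (Theorem \ref{thm-gamma0-gamma}) and Proposition \ref{prop-GTs}. First I would prove the statement $M(\chi_0)\neq 0 \Rightarrow M(\chi)\neq 0$ for some $\chi\in\hat\Gamma$. Pick a nonzero $v\in M(\chi_0)$ and consider the $\g_0$-submodule, or rather just the $\Gamma$-submodule, generated by $v$: since $\Gamma_0$ acts on $v$ through the generalized character $\chi_0$ and $\phi(\Gamma)=\Gamma_0$, the algebra $\Gamma$ acts on the finite-dimensional space spanned by powers of elements of $\Gamma$ applied to $v$ after passing to a suitable quotient — more precisely, I would take $N:=U(\g_0)v$ (or even the smaller $\Gamma\cdot v$) and observe that $\Gamma_0$ acts locally finitely on it. The key point: the kernel $\operatorname{Ker}\phi\cap\Gamma$ need not act nilpotently on $v$ a priori, so one cannot simply pull back $\chi_0$ along $\phi$. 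Instead I would argue that $\Gamma$ acts locally finitely on $N$ (this follows since $N\subset M$ and we will show $M\in\mathcal F(\Gamma)$, but to avoid circularity one localizes to the cyclic case), hence $N$ decomposes into generalized $\Gamma$-eigenspaces $N=\oplus_{\chi}N(\chi)$, and since $v\neq 0$ it has a nonzero component in some $N(\chi)\subseteq M(\chi)$, with necessarily $\chi|_{\Gamma_0}$ compatible with $\chi_0$ via $\phi^*$.

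For the second assertion, suppose $V$ is generated by $V(\chi_0)$. By the argument just given, $V(\chi_0)$ decomposes under $\Gamma$, so $V$ is generated by the subspaces $V(\chi)$ for the finitely many $\chi\in(\phi^*)^{-1}$-fiber-type weights appearing; applying Proposition \ref{prop-GTs} to each cyclic piece $U(\g)w$ with $w\in V(\chi)$ shows $U(\g)w\in\mathcal F(\Gamma)$, and since $V$ is a sum (indeed quotient of a direct sum) of such pieces and $\mathcal F(\Gamma)$ is closed under quotients and extensions, $V\in\mathcal F(\Gamma)$. Alternatively, and more cleanly, one invokes Theorem \ref{thm-gamma0-gamma}: since $V$ is generated by $V(\chi_0)$ on which $\Gamma_0$ acts through a single generalized character, $\Gamma_0$ acts locally finitely on $V$ (each element of $V$ lies in $U(\g)\cdot V(\chi_0)$, and Corollary \ref{g-GT-finite-length} / the local finiteness transfers along $U(\g)$ since $U(\g)$ is finite over $U(\g_0)$ and tensoring with the finite-dimensional $\Lambda(\g_{-1})$ preserves local finiteness), so $V\in\mathcal F_{\g}(\Gamma_0)=\mathcal F(\Gamma)$.

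For the final assertion, let $M$ be irreducible with $M(\chi_0)\neq 0$. Then by the above $M(\chi)\neq 0$ for some $\chi\in\hat\Gamma$, and since $M$ is irreducible it is generated by any nonzero vector, in particular by a nonzero element of $M(\chi)$; hence the hypothesis of Proposition \ref{prop-GTs} applies. It remains to check the finiteness $\dim M(\chi)<\infty$ needed to upgrade $M\in\mathcal F(\Gamma)$ to $M\in\mathcal M(\Gamma)$: an irreducible $\g$-module is finitely generated, hence finitely generated over $U(\g_0)$, and in $\mathcal F(\Gamma)$ the generalized eigenspaces are finite-dimensional — this uses Gelfand-Tsetlin theory for $\g_0$ via Proposition \ref{prop-fin-mult}(iii) together with the fact that $M$ as a $\g_0$-module, being a quotient of $\Lambda(\g_{-1})\otimes M^{\g_1}$ with $M^{\g_1}$ an irreducible-ish Gelfand-Tsetlin $\g_0$-module of finite type, has finite-dimensional $\Gamma_0$-weight spaces, which transfer to finite-dimensional $\Gamma$-weight spaces since $\phi$ is surjective. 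The main obstacle I anticipate is precisely this last finiteness bookkeeping: showing that finite generation over $U(\g)$ plus local finiteness over $\Gamma$ forces finite-dimensional generalized eigenspaces requires the Gelfand-Tsetlin finiteness results for $\gl(n)$ (the cited \cite{O}) applied carefully to the even part, rather than anything intrinsically super.
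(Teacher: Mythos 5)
Your plan is in the same spirit as the paper's proof -- both hinge on combining Theorem \ref{g0-GT-finite-length} with Theorem \ref{thm-gamma0-gamma} to transfer local finiteness from $\Gamma_0$ to $\Gamma$ -- but the execution of the first assertion has a real gap. You propose to work with $N:=U(\g_0)v$ (or $\Gamma\cdot v$) and then argue that $\Gamma$ acts locally finitely on it. Neither of these is a $\g$-submodule: $U(\g_0)v$ is only $\g_0$-stable and need not be preserved by $\Gamma\subset U(\g)$, while $\Gamma\cdot v$ is $\Gamma$-stable but you have no independent control on its dimension (you only control $\Gamma_0\cdot v$; $\Gamma$ and $\Gamma_0$ live in different enveloping algebras and the only bridge is $\phi$, which does not embed $\Gamma_0$ back into $\Gamma$). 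Theorem \ref{thm-gamma0-gamma}, the tool you want to apply, is a statement about $\g$-modules, so you cannot invoke it here. You correctly flag a circularity concern, but the resolution you gesture at ("localize to the cyclic case") is not actually carried out. The clean fix, which is what the paper does, is to pass to the $\g$-submodule $N:=U(\g)M(\chi_0)\subset M$; set $N':=U(\g_0)M(\chi_0)$, note that $N$ is a quotient of $U(\g)\otimes_{U(\g_0)}N'\cong \Lambda(\g_{-1}\oplus\g_1)\otimes N'$ as a $\g_0$-module, so $N\in\mathcal F(\Gamma_0)$ by Theorem \ref{g0-GT-finite-length}, hence $N\in\mathcal F(\Gamma)$ by Theorem \ref{thm-gamma0-gamma}, and since $N\subset M$ and $N\neq 0$ some $N(\chi)\subset M(\chi)$ is nonzero. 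This is also exactly your own second-assertion argument, so the first assertion should simply be deduced from it rather than proved separately by a shakier route.

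For the third assertion, you overcomplicate things: $\mathcal M(\Gamma)$ is defined as the category of \emph{finitely generated} $\g$-modules that decompose as $\bigoplus_\chi M(\chi)$; no finite-dimensionality of $M(\chi)$ is required, so Proposition \ref{prop-fin-mult}(iii) and the $\gl(n)$ finiteness theory are not needed here. An irreducible module is cyclic, hence finitely generated, and any finitely generated object of $\mathcal F(\Gamma)$ lies in $\mathcal M(\Gamma)$ because local finiteness of the commutative $\Gamma$ already forces the generalized eigenspace decomposition. So once the first two assertions are in place, the third really is immediate, as the paper says.
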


\begin{proof} Consider the $\g$-submodule
  $N$ of $M$ generated by $M(\chi_0)$. Let $N'=U(\g_0)M(\chi)$.  Then
  $N$ is a quotient of the induced module $U(\g)\otimes_ {U(\g_0)}N'$.
  Hence 
  $N',N\in \mathcal F(\Gamma_0)$ by Theorem \ref{g0-GT-finite-length}. Applying Theorem \ref{thm-gamma0-gamma} we
  conclude that $N\in\mathcal F(\Gamma)$. 
    Therefore, $N(\chi)\neq 0$ for  some $\chi\in \hat{\Gamma}$ implying the first assertion. Two other assertions are straightforward.
\end{proof}

\section{Classification of Gelfand-Tsetlin modules }

\subsection{Restriction and induction functors} 
Denote by $\bf {K_0}$ the restriction of the functor $\bf K$ introduced above to the category of Gelfand-Tsetlin $\g_0$-modules $\mathcal M(\Gamma_0)$. 

Let $N\in \mathcal M(\Gamma_0)$ and $M=K(N)$. Then $M$ is generated by the $\g_0$-submodule $M_0\subset M^{\g_1}$ isomorphic to $N$. For $v\in M_0$ we have 
$zv=\phi(z)v$ for all $z\in \Gamma 
$, where $\Gamma$  is any Gelfand-Tsetlin subalgebra of $\g$ such that
$\Gamma_0=\phi(\Gamma)$.
Hence 
$M_0$  is a Gelfand-Tsetlin $\g_0$-module with respect to
$\Gamma_0$ and
$$M_0=\oplus_{\chi\in \hat{\Gamma}}M_0(\chi)=\oplus_{\chi_0\in \hat{\Gamma}_0}M_0(\chi_0).$$

Applying Corollary \ref{cor-lifting} we conclude that $M\in \mathcal F(\Gamma)$.  Moreover, since $M$ is finitely generated (as $N$ is finitely generated) 
we have that $M=K(N)$  is a Gelfand-Tsetlin $\g$-module with respect to $\Gamma$. Hence   $\bf {K_0}$ defines a functor:
 $$\bf {K_0}:\mathcal M(\Gamma_0) \rightarrow \mathcal M(\Gamma).$$

Moreover we have:

\begin{corollary}\label{cor-irr-K(N)}
 Let  $M$ be an irreducible  $\g$-module and $M(\chi_0)\neq 0$ for some $\chi_0\in \hat{\Gamma}_0$.
Then
$M$ is a unique irreducible quotient of $K(N)$ 
for some irreducible Gelfand-Tsetlin (with respect to $\Gamma_0$) $\g_0$-module   $N$.
Furthermore, $M$ is a Gelfand-Tsetlin $\g$-module with respect to any $\Gamma$ such that
$\Gamma_0=\phi(\Gamma)$.
\end{corollary}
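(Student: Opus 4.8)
The plan is to deduce Corollary~\ref{cor-irr-K(N)} from Corollary~\ref{cor-lifting}, Proposition~\ref{prop-adjoint}, and the functor $\mathbf{K_0}$ just constructed. First, since $M$ is irreducible and $M(\chi_0)\neq 0$, Corollary~\ref{cor-lifting} gives $M\in\mathcal M(\Gamma)$ for every $\Gamma$ with $\phi(\Gamma)=\Gamma_0$; this already yields the last sentence of the statement. In particular $M(\chi)\neq 0$ for some $\chi\in\hat\Gamma$, and by Proposition~\ref{prop-GTs} (applied to the submodule generated by $M(\chi)$, which is all of $M$ by irreducibility) we know $M$ decomposes over $\Gamma$ and hence over $\Gamma_0$.

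Next I would produce the $\g_0$-module $N$. Set $N:=M^{\g_1}$, which is a nonzero $\g_0$-submodule of $M$ because $\g_1$ is a finite-dimensional nilpotent subalgebra acting on the nonzero module $M$, so it has a nonzero invariant. Since $M\in\mathcal M(\Gamma)$ and $zv=\phi(z)v=\phi(z)v$ for $v\in N$ and $z\in\Gamma$, the decomposition of $M$ over $\Gamma$ restricts to a decomposition $N=\bigoplus_{\chi_0\in\hat\Gamma_0}N(\chi_0)$; thus $N$ lies in $\mathcal M(\Gamma_0)$ once we check it is finitely generated, which follows from $M$ being finitely generated over $U(\g_0)$ (as in the proof of Theorem~\ref{thm-gamma0-gamma}, $U(\g)$ is finite over $U(\g_0)$) together with Noetherianity of $U(\g_0)$. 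Then I would pass to an irreducible $\g_0$-quotient (or submodule) $N'$ of $N$ with $\g_1 N'=0$; it still lies in $\mathcal M(\Gamma_0)$ since $\mathcal M(\Gamma_0)$ is closed under subquotients. By the adjunction of Proposition~\ref{prop-adjoint}, the inclusion $N'\hookrightarrow R(M)=M^{\g_1}$ corresponds to a nonzero $\g$-homomorphism $K(N')\to M$, which is surjective because $M$ is irreducible; hence $M$ is a quotient of $K(N')$. By the definition of $L(N')$ recorded in the Preliminaries, $K(N')$ has a unique irreducible quotient, so $M\cong L(N')$, establishing existence and uniqueness of the irreducible quotient realization. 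Finally, $N'\in\mathcal M(\Gamma_0)$ by construction, completing the statement. Setting $N:=N'$ gives the claimed $N$.

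The main obstacle I anticipate is the bookkeeping around finite generation and the choice of $N$: one must be careful that $N=M^{\g_1}$ is finitely generated over $U(\g_0)$ (so that an irreducible subquotient with the required generation property exists and stays in $\mathcal M(\Gamma_0)$), and that the adjunction morphism $K(N')\to M$ is genuinely surjective rather than merely nonzero — surjectivity is immediate from irreducibility of $M$, but one should also confirm $\g_1 N'=0$ so that $K(N')$ is defined. Everything else is a direct assembly of the already-proved Corollary~\ref{cor-lifting}, the construction of $\mathbf{K_0}$, and Frobenius reciprocity, so I do not expect substantial new difficulties beyond this routine verification.
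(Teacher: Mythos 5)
Your overall strategy — Corollary~\ref{cor-lifting} for the final sentence, take invariants $M^{\g_1}$, show they form a Gelfand--Tsetlin $\g_0$-module, find an irreducible piece $N'$ inside and apply Frobenius reciprocity (Proposition~\ref{prop-adjoint}) — is the right shape for this statement. Note that the paper gives no explicit proof of this corollary: it is clearly relying on the bijection between irreducible $\g$-modules and irreducible $\g_0$-modules quoted from \cite{CM} in the introduction, and only adds the observation that $N$ inherits the Gelfand--Tsetlin property from $M$. Your attempt re-derives part of that bijection from scratch, which is legitimate, but it has a real gap in the step that produces $N'$.

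You write that one should ``pass to an irreducible $\g_0$-quotient (or submodule) $N'$ of $N$,'' but these two options are not interchangeable. The Frobenius reciprocity step that follows — turning the inclusion $N'\hookrightarrow M^{\g_1}$ into a nonzero map $K(N')\to M$ — genuinely requires $N'$ to be a \emph{submodule} of $M^{\g_1}$: the adjunction gives $\Hom_{\g}(K(N'),M)\simeq\Hom_{\g_0}(N',M^{\g_1})$, and a quotient map $M^{\g_1}\twoheadrightarrow N'$ points the wrong way. However, the Noetherianity of $U(\g_0)$, which you correctly invoke, only guarantees maximal submodules of a finitely generated module (hence irreducible \emph{quotients}); it does \emph{not} produce an irreducible submodule. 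So as written, the existence of $N'$ is unjustified. The cleanest repair in the spirit of your own argument uses what you have already established: $M\in\mathcal M(\Gamma)$ and $\mathfrak h\subset\Gamma$, so $M$ carries a $\mathbb Z$-grading by the eigenvalue of the grading element $d$ which is bounded above (irreducibility gives $M=U(\g_{-1})U(\g_0)v$ for any weight vector $v$, and $\Lambda(\g_{-1})$ is finite-dimensional). The top graded piece $M[k_0]$ is a $\g_0$-submodule of $M^{\g_1}$, and is $\g_0$-irreducible because any proper nonzero $\g_0$-submodule of it would generate a proper $\g$-submodule of $M$; take $N'=M[k_0]$. Alternatively, simply cite the bijection of \cite{CM} to get the unique irreducible $\g_0$-module $N$ with $M\simeq L(N)$, and then observe that $N$ is a $\g_0$-subquotient of $M$, hence lies in $\mathcal M(\Gamma_0)$ by Theorem~\ref{thm-gamma0-gamma} and closure of that category under subquotients. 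One smaller point: the non-vanishing of $M^{\g_1}$ should be justified from $U(\g_1)\simeq\Lambda(\g_1)$ being finite-dimensional with nilpotent augmentation ideal; the general slogan that a nilpotent Lie algebra acting on a nonzero module has nonzero invariants is false for infinite-dimensional modules.
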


We also have

\begin{corollary}\label{prop-I(M)} 
\begin{itemize}
\item[(i)]
Given any $\chi_0\in \hat{\Gamma}_0$
 there exists an irreducible $M\in \mathcal M(\Gamma)$ with $M(\chi_0)\neq 0$.
 \item[(ii)]  If $M_0$ is a Gelfand-Tsetlin module for $\g_{{0}}$ with respect to a
Gelfand-Tsetlin subalgebra $\Gamma_0$, then $\Ind_{\g_{{0}}}^{\g}M_0$ is a
Gelfand-Tsetlin module for $\g$ with respect to any
Gelfand-Tsetlin subalgebra $\Gamma$ such that
$\Gamma_0=\phi(\Gamma)$.
\end{itemize}
 \end{corollary}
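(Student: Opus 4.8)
\textbf{Proof proposal for Corollary \ref{prop-I(M)}.}

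The plan is to deduce both parts directly from the machinery already assembled, with no new computation. For part (i), I start with the given character $\chi_0\in\hat\Gamma_0$. By Proposition \ref{prop-adjoint} the pair $(\mathbf K,\mathbf R)$ is adjoint, and by Proposition \ref{prop-I(M)}(ii)—proved first, see below—the Kac-type induction $\mathbf{K_0}$ carries $\mathcal M(\Gamma_0)$ into $\mathcal M(\Gamma)$. So I pick an irreducible $\g_0$-module $N\in\mathcal M(\Gamma_0)$ with $N(\chi_0)\neq 0$; such $N$ exists because some irreducible subquotient of a cyclic Gelfand-Tsetlin module generated by a nonzero vector in the appropriate generalized eigenspace has $\chi_0$ in its support (this is the $\g_0$ Gelfand-Tsetlin theory of \cite{O}, already invoked in Proposition \ref{prop-fin-mult}). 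Then $K(N)\in\mathcal M(\Gamma)$ and $K(N)(\chi_0)\supseteq N(\chi_0)\neq 0$ since $N$ embeds as $M_0\subset K(N)^{\g_1}$ with $zv=\phi(z)v$ for $z\in\Gamma$. Passing to the unique irreducible quotient $L(N)$ of $K(N)$: I must check $\chi_0$ survives to the quotient. Here I use that $K(N)$ has a weight-space decomposition refining the $\Gamma_0$-decomposition and that the maximal submodule, being proper, cannot contain the cyclic generator $M_0$; hence the image of $M_0(\chi_0)$ in $L(N)$ is nonzero, so $L(N)(\chi_0)\neq 0$. By Corollary \ref{cor-lifting}, $L(N)\in\mathcal M(\Gamma)$, and it is irreducible by construction, giving the desired $M$.

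For part (ii), the statement is essentially the assertion that the functor $\mathbf{K_0}$ lands in $\mathcal M(\Gamma)$, together with the observation that $\Ind_{\g_0}^{\g} M_0 = U(\g)\otimes_{U(\g_0)}M_0$ differs from the Kac module $K(M_0)=U(\g)\otimes_{U(\g_0\oplus\g_1)}M_0$ only by also inducing up along $\g_1$. As a $\g_0$-module, $\Ind_{\g_0}^{\g}M_0\simeq U(\g_1)\otimes U(\g_{-1})\otimes M_0 \simeq \Lambda(\g_1)\otimes\Lambda(\g_{-1})\otimes M_0$ (PBW), which is a quotient of finitely many copies of $M_0$ tensored with the finite-dimensional $\g_0$-module $\Lambda(\g_1)\otimes\Lambda(\g_{-1})$. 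By Theorem \ref{g0-GT-finite-length}, tensoring a Gelfand-Tsetlin $\g_0$-module with a finite-dimensional $\g_0$-module keeps it in $\mathcal M(\Gamma_0)$; hence $\Ind_{\g_0}^{\g}M_0\in\mathcal F_\g(\Gamma_0)$, and it is finitely generated over $U(\g_0)$ since $M_0$ is and $U(\g)$ is finite over $U(\g_0)$. Now Theorem \ref{thm-gamma0-gamma} converts this into membership in $\mathcal M(\Gamma)$ for any $\Gamma$ with $\phi(\Gamma)=\Gamma_0$.

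I would carry these out in the order (ii) then (i), since (i) invokes (ii). The routine points—PBW identification of the induced module as a $\g_0$-module, finite generation—are immediate from what is already in the paper. The one place that needs genuine care, and which I expect to be the main obstacle, is the claim in (i) that $\chi_0$ persists to the irreducible quotient $L(N)$: one must argue that the maximal proper submodule of $K(N)$ meets $M_0(\chi_0)$ trivially. The clean way is to note that $M_0\cong N$ is irreducible over $\g_0$, so any $\g$-submodule either contains all of $M_0$ (hence equals $K(N)$, as $M_0$ generates) or meets $M_0$ in $0$; in the latter case its image under the projection to the $\g_0$-summand $M_0$ is zero, and in particular it contributes nothing to $M_0(\chi_0)$. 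Therefore $L(N)(\chi_0)\neq 0$, completing the argument.
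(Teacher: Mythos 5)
Your proof is correct and follows essentially the same route as the paper: part (ii) comes from recognizing $\Ind_{\g_0}^{\g}M_0$ as $\Lambda(\g_1\oplus\g_{-1})\otimes M_0$ over $\g_0$ and feeding it through Theorem \ref{g0-GT-finite-length} and Theorem \ref{thm-gamma0-gamma}; part (i) picks an irreducible $N$ with $N(\chi_0)\neq 0$ and passes to the simple top of $K(N)$. The one place where you go beyond what the paper writes is also the one place the paper is terse: the paper simply asserts ``Then $M(\chi_0)\neq 0$'' without justification, and you supply the missing argument — since $M_0\cong N$ is $\g_0$-irreducible and generates $K(N)$, any proper $\g$-submodule must intersect $M_0$ trivially, so $M_0(\chi_0)$ injects into the irreducible quotient. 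That is exactly the right reasoning and it is a genuine gap in the paper's exposition worth closing; your intermediate phrasing about ``projection to the $\g_0$-summand'' is dispensable (the injection $M_0\hookrightarrow L(N)$ already does the job), and your final paragraph already has the clean version, so no repair is needed.
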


\begin{proof} Let $N$ be an irreducible $\g_0$-module such that $N(\chi_0)\neq 0$ and $M$ be the unique simple quotient of $K(N)$.
  Then $M(\chi_0)\neq 0$ and the first statement follows.  

Since $M=\Ind_{\g_{{0}}}^{\g}M_0$ is generated by $M_0$ then it is a Gelfand-Tsetlin $\g_0$-module with respect to $\Gamma_0$ by Theorem \ref{g0-GT-finite-length}. Then $M\in \mathcal F(\Gamma)$ by Theorem \ref{thm-gamma0-gamma}. Moreover, $M$ is finitely generated as $M_0$ is finitely generated  as a $\g_0$-module. Hence, $M$
is a
Gelfand-Tsetlin module for $\g$ with respect to any
Gelfand-Tsetlin subalgebra $\Gamma$ such that
$\Gamma_0=\phi(\Gamma)$.
\end{proof}

\

Denote by $\bf R_0$ the restriction of the functor $\bf R$ to the Gelfand-Tsetlin category $ \mathcal M(\Gamma)$. 
For a $\g$-module  
$M$,  $R(M)=M^{\g_1}$ is a $\g_{{0}}$-module.
Since
$\phi:\Gamma\rightarrow \Gamma_0$ is an epimorphism then
$R(M)$  is a Gelfand-Tsetlin $\g_{{0}}$-module with respect to
$\Gamma_0$, hence
 $${\bf R_0}: \mathcal M(\Gamma)\rightarrow \mathcal M(\Gamma_0).$$  By Proposition \ref{prop-adjoint}, the pair $(\bf K_0, \bf R_0)$ is adjoint.



\

\subsection{Reduction to the even part}

The results obtained in the previous section allow to reduce  the classification problem of irreducible Gelfand-Tsetlin $\g$-modules to the classification  of irreducible Gelfand-Tsetlin $\g_0$-modules. 

\begin{theorem}\label{thm-GTs-adj}

\item[(i)] For any irreducible Gelfand-Tsetlin (with respect to $\Gamma$) $\g$-module $M$  there exists a unique
 irreducible Gelfand-Tsetlin (with respect to $\Gamma_0$)  $\g_{{0}}$-module $N$ such that $M$ is a unique irreducible quotient
of $K(N)$.
\item [(ii)] Let $M$ be  an irreducible Gelfand-Tsetlin module with respect to
a Gelfand-Tsetlin subalgebra $\Gamma'$ such that
$\phi(\Gamma')=\Gamma_0$. Then $M$ is a Gelfand-Tsetlin module
with respect to any Gelfand-Tsetlin subalgebra $\Gamma$ such that
$\phi(\Gamma)=\Gamma_0$.

\end{theorem}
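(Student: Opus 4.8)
The two parts are already essentially assembled from the machinery of the previous section, so the plan is to quote the right corollaries in the right order rather than to prove anything new.

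For part (i), I would start with an irreducible Gelfand-Tsetlin $\g$-module $M$ with respect to $\Gamma$. By definition $M=\bigoplus_{\chi\in\hat\Gamma}M(\chi)$, so $M(\chi)\neq 0$ for some $\chi$; restricting $\chi$ to $\Gamma_0$ via the inclusion $\phi(\Gamma)=\Gamma_0$ (or rather via $\phi^*$) gives a character $\chi_0\in\hat\Gamma_0$ with $M(\chi_0)\neq 0$. Indeed one needs $M(\chi_0)\supset M(\chi)\neq 0$, which holds because $\gamma v=\phi(\gamma)v$ is false in general but the generalized eigenspace condition passes through: if $z\in\Gamma_0$ and $\psi(z)\in\Gamma$ is a lift with $\phi(\psi(z))=z$, then on $M(\chi)$ the operator $\psi(z)$ is locally $\chi(\psi(z))$, and one checks $z$ acts locally finitely — more simply, $M\in\mathcal M(\Gamma)=\mathcal M_{\g}(\Gamma_0)$ by Theorem~\ref{thm-gamma0-gamma}, so $M$ decomposes over $\hat\Gamma_0$ as well and some $M(\chi_0)\neq 0$. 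Now apply Corollary~\ref{cor-irr-K(N)}: $M$ is the unique irreducible quotient of $K(N)$ for some irreducible Gelfand-Tsetlin (with respect to $\Gamma_0$) $\g_0$-module $N$. For uniqueness of $N$: if $M$ is also the unique irreducible quotient of $K(N')$ with $N'$ irreducible, then by the adjunction $(\mathbf K_0,\mathbf R_0)$ (Proposition~\ref{prop-adjoint}, restricted) we have $\Hom_{\g_0}(N',R(M))\cong\Hom_{\g}(K(N'),M)\neq 0$ and likewise for $N$; since $N,N'$ are irreducible $\g_0$-submodules of $R(M)=M^{\g_1}$ generated by their top, and $N=M^{\g_1}\cap(\text{the }\chi_0\text{-part})$ is forced by the highest weight / Gelfand-Tsetlin character considerations, one concludes $N\cong N'$. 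The cleanest phrasing: $N$ must be the (unique) irreducible $\g_0$-submodule of $R(M)$ on which $\g_1$ acts by zero and whose $\Gamma_0$-support meets the support of $M$, and this pins it down.

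Part (ii) is then immediate from Corollary~\ref{cor-indep-gamma} together with part (i). Concretely: let $M$ be irreducible Gelfand-Tsetlin with respect to $\Gamma'$, $\phi(\Gamma')=\Gamma_0$. Then $M\in\mathcal M(\Gamma')\subset\mathcal F(\Gamma')$, and by Corollary~\ref{cor-indep-gamma} (or directly Theorem~\ref{thm-gamma0-gamma}, since $\mathcal F(\Gamma')=\mathcal F_\g(\Gamma_0)=\mathcal F(\Gamma)$) we get $M\in\mathcal F(\Gamma)$. It remains only to see $M$ is finitely generated over $\Gamma$-Gelfand-Tsetlin structure, i.e.\ lies in $\mathcal M(\Gamma)$ and not merely $\mathcal F(\Gamma)$; but $M$ is finitely generated as a $\g$-module (being irreducible), hence $M\in\mathcal M(\Gamma)=\mathcal M_\g(\Gamma_0)$ by the second assertion of Theorem~\ref{thm-gamma0-gamma}. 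One could also simply invoke Corollary~\ref{cor-lifting}: from $M(\chi_0)\neq 0$ and $M$ irreducible one gets $M\in\mathcal M(\Gamma)$ directly.

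The only genuine subtlety — the ``main obstacle'' — is the uniqueness clause in (i): one must rule out that two non-isomorphic irreducible $\g_0$-modules $N,N'$ both have $M$ as their unique irreducible quotient after applying $K$. I expect to handle this by the adjunction: an irreducible quotient map $K(N)\twoheadrightarrow M$ gives a nonzero $N\to R(M)$, and since $N$ is irreducible this is an embedding; the image is precisely the $\g_0$-socle component of $R(M)=M^{\g_1}$ supported at the relevant $\Gamma_0$-characters, which is canonically determined by $M$. So any such $N$ is identified with a fixed submodule of $R(M)$, forcing $N\cong N'$. Everything else is bookkeeping with the equivalences $\mathcal F(\Gamma)=\mathcal F_\g(\Gamma_0)$ and $\mathcal M(\Gamma)=\mathcal M_\g(\Gamma_0)$ established in Theorem~\ref{thm-gamma0-gamma}.
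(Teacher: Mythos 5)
Your proposal is correct and follows essentially the same route as the paper: the paper's own proof of this theorem is a two-sentence pointer, ``The first statement follows from Corollary~\ref{cor-irr-K(N)} and Theorem~\ref{thm-gamma0-gamma}. The second statement follows from Corollary~\ref{cor-indep-gamma}.'' Your part (ii) is exactly this. Your part (i) also invokes the right pieces and, beyond the paper, correctly observes that passing from ``$M(\chi)\neq 0$ for some $\chi\in\hat\Gamma$'' to ``$M(\chi_0)\neq 0$ for some $\chi_0\in\hat\Gamma_0$'' is best done through $\mathcal M(\Gamma)=\mathcal M_\g(\Gamma_0)$, rather than through lifts $\psi(z)$ of elements of $\Gamma_0$ (your first attempt, which you correctly abandon, does not work because $z$ and $\psi(z)$ differ by an element of $\ker\phi$ that need not annihilate $M(\chi)$).

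The one place you should tighten is the uniqueness of $N$ in (i), which you rightly flag as the only genuine subtlety (the paper's proof does not address it explicitly either). Your framing---``the image is the $\g_0$-socle component of $R(M)$ supported at the relevant $\Gamma_0$-characters, which is canonically determined''---is not a proof: a priori $R(M)=M^{\g_1}$ could contain several simple submodules, and nothing you say pins down which ``characters are relevant.'' The standard way to close this is the $\bZ$-grading by the element $d$ with $[d,x]=ix$ on $\g_i$. Any surjection $K(N)\twoheadrightarrow M$ with $N$ simple and $\g_1 N=0$ exhibits $M=U(\g_{-1})N$; since $d$ is central in $\g_0$ it acts by a scalar $c$ on $N$, $\g_{-1}$ strictly lowers $d$-degree, and the degree-$c$ component of $M$ is a nonzero quotient of $N$, hence isomorphic to $N$. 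So $N$ is recovered as the top $d$-graded piece of $M$, which is canonically determined by $M$; if $N'$ also works, its degree $c'$ must again be the top degree of $M$, forcing $c'=c$ and $N'=N$ inside $M$. (This is the same mechanism the paper uses in the proof of Theorem~\ref{Kac-module} with $E_{11}+\cdots+E_{mm}$ in place of $d$.) With that replacement your argument is complete.
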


\begin{proof}
The first statement follows from Corollary \ref{cor-irr-K(N)} and Theorem \ref{thm-gamma0-gamma}. The second statement follows from Corollary \ref{cor-indep-gamma}.

\end{proof}

 It is shown in \cite{O} (see also \cite[Theorem 4.12(c)]{FO}) that  Gelfand-Tsetlin multiplicities of any  irreducible Gelfand-Tsetlin $\g_0$-module are finite and uniformly bounded. Then we have

\begin{corollary}\label{fin-mult} If $M$ is an irreducibe Gelfand-Tsetlin module. Then Gelfand-Tsetlin multiplicities of $M$ are finite and uniformly bounded.
\end{corollary}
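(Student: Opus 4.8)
The plan is to derive Corollary~\ref{fin-mult} directly from the structural results that precede it, the key input being Theorem~\ref{thm-GTs-adj}(i), which realizes any irreducible Gelfand-Tsetlin $\g$-module $M$ as the unique irreducible quotient of $K(N)$ for a suitable irreducible Gelfand-Tsetlin $\g_0$-module $N$. Since $M$ is a quotient of $K(N)$, we have a surjection of $\Gamma_0$-modules (indeed of $\g_0$-modules) $K(N)\twoheadrightarrow M$, hence $\dim M(\chi_0)\leqs \dim K(N)(\chi_0)$ for every $\chi_0\in\hat{\Gamma}_0$. So it suffices to bound the Gelfand-Tsetlin multiplicities of $K(N)$ in terms of those of $N$.

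First I would observe that as a $\g_0$-module, $K(N)\cong U(\g_{-1})\otimes N\cong \Lambda(\g_{-1})\otimes N$, so $K(N)=V\otimes N$ with $V=\Lambda(\g_{-1})$ a fixed finite-dimensional $\g_0$-module of dimension $2^{mn}$. Now apply Proposition~\ref{prop-fin-mult}(iii): since $N$ is an irreducible Gelfand-Tsetlin $\g_0$-module, its multiplicities $\dim N(\chi)$ are finite for all $\chi$ — this is the result of \cite{O} quoted just before the corollary, that Gelfand-Tsetlin multiplicities of irreducible $\g_0$-modules are finite — so $\dim[N\otimes V](\theta)<\infty$ for all $\theta\in\hat{\Gamma}_0$, and therefore $\dim M(\chi_0)\leqs\dim[N\otimes V](\chi_0)<\infty$. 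This gives finiteness. For the uniform bound I would be more quantitative: by Proposition~\ref{prop-fin-mult}(i) there is a finite set $S(\chi_0)\subset\hat{\Gamma}_0$ with $N(\chi_0)\otimes V\subset\bigoplus_{\theta\in S(\chi_0)}[N\otimes V](\theta)$, and unwinding the proof of that proposition the set $S(\chi_0)$ has cardinality bounded by a constant depending only on $\dim V$ (a product of bounds coming from the Kostant theorem at each step of the Gelfand-Tsetlin chain, each controlled by $\dim V$). Dually, $[N\otimes V](\theta)\subset\bigoplus_{\chi\in S^{-1}(\theta)}N(\chi)\otimes V$ with $|S^{-1}(\theta)|$ similarly bounded, so $\dim M(\chi_0)\leqs\dim[N\otimes V](\chi_0)\leqs |S^{-1}(\chi_0)|\cdot(\dim V)\cdot\sup_\chi\dim N(\chi)$, and since by \cite{O} the multiplicities $\dim N(\chi)$ are uniformly bounded over all irreducible $\g_0$-modules $N$ by a single constant $C_0$ depending only on $m,n$, we get $\dim M(\chi_0)\leqs C_0\cdot 2^{mn}\cdot\max_\theta|S^{-1}(\theta)|$, a bound independent of both $M$ and $\chi_0$.

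The main obstacle is the uniformity of the bound rather than finiteness: one must check that the auxiliary finite sets $S(\chi)$, $S^{-1}(\chi)$ from Proposition~\ref{prop-fin-mult} have cardinality bounded independently of the character $\chi$ and of the module, which requires reading off from that proposition's inductive construction — resting on Kostant's theorem on tensoring with finite-dimensional modules — that at each stage the number of new central characters appearing is controlled by $\dim V=2^{mn}$ alone. This is implicit in the statement ``$S^{-1}(\xi)$ is finite for any central character $\xi$'' used in the proof of Proposition~\ref{prop-fin-mult}(ii), together with the corresponding fact in \cite{O} that the Gelfand-Tsetlin multiplicities of irreducibles are \emph{uniformly} bounded; combining these two uniformities is the whole content. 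I would phrase the final proof compactly: invoke Theorem~\ref{thm-GTs-adj}(i) to write $M$ as a quotient of $K(N)\cong\Lambda(\g_{-1})\otimes N$, then invoke \cite{O} for the uniform bound on $\dim N(\chi)$ and Proposition~\ref{prop-fin-mult} for the passage through $-\otimes\Lambda(\g_{-1})$.
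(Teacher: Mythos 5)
Your proof follows the same route as the paper's: invoke Theorem~\ref{thm-GTs-adj}(i) to reduce to $K(N)\simeq\Lambda(\g_{-1})\otimes N$, cite the finiteness and uniform boundedness of multiplicities for $\g_0$ from \cite{O}, and pass through Proposition~\ref{prop-fin-mult}. You are in fact more careful than the paper's terse proof, which simply cites Proposition~\ref{prop-fin-mult}(iii) even though that part only asserts finiteness; your observation that the uniform bound requires unwinding the sets $S(\chi)$, $S^{-1}(\chi)$ and controlling their cardinality by $\dim\Lambda(\g_{-1})=2^{mn}$ correctly supplies the detail the paper leaves implicit.
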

\begin{proof} By Theorem \ref{thm-GTs-adj} (i) it suffices to check this property for $K(N)$ for some irreducible Gelfand-Tsetlin module
  $N$. The Gelfand-Tsetlin multiplicities of $N$ are finite and uniformly bounded.
   Since $K(N)\simeq \Lambda(\g_{-1})\otimes N$ as
  a $\g_0$-module, the statement follows
  from Proposition \ref{prop-fin-mult} (iii).
  \end{proof}

Finally we have the following finiteness result.

\begin{theorem}\label{theorem-finiteness} Let $\Gamma$ be a  Gelfand-Tsetlin subalgebra of $U(\g)$. 
\begin{itemize}
  \item[(i)] Given
$\chi_0\in\hat{\Gamma}_0$, there exist  finitely
many non-isomorphic irreducible Gelfand-Tsetlin (with respect to
$\Gamma$) $\g$-modules
 $M$ such that  $M^{\g_{1}}\cap M(\chi_0)\neq 0$. 
\item[(ii)] For any $\chi_0\in \hat{\Gamma}_0$ there exist  finitely many
non-isomorphic irreducible Gelfand-Tsetlin (with respect to
$\Gamma$) $\g$-modules $M$ with $M(\chi_0)\neq 0$.

\end{itemize}
\end{theorem}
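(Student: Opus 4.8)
The plan is to deduce Theorem \ref{theorem-finiteness} from the reduction to the even part together with the finiteness results already available for $\gl(m)\oplus\gl(n)$. For part (i), I would start with an irreducible Gelfand-Tsetlin $\g$-module $M$ such that $M^{\g_1}\cap M(\chi_0)\neq 0$. By Theorem \ref{thm-GTs-adj}(i), $M$ is the unique irreducible quotient of $K(N)$ for a uniquely determined irreducible Gelfand-Tsetlin $\g_0$-module $N$; moreover, since $M$ is generated by its image of $N=M^{\g_1}$ (up to the radical of $K(N)$) and $N$ sits inside $M^{\g_1}$, the hypothesis forces $N(\chi_0)\neq 0$. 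Now the key input is that there are only finitely many non-isomorphic irreducible Gelfand-Tsetlin $\g_0$-modules $N$ with $N(\chi_0)\neq 0$: this is exactly the $\gl(m)\oplus\gl(n)$ statement of \cite{O} (the number of simple Gelfand-Tsetlin modules over each central character fiber of $\Gamma_0$ is finite), applied to the two tensor factors $\Gamma(m)$ and $\Gamma(n)$. Since $M$ is recovered from $N$ as the unique irreducible quotient of $K(N)$, the map $M\mapsto N$ is injective on isomorphism classes, so finitely many $N$ give finitely many $M$. This proves (i).

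For part (ii), the issue is that $M(\chi_0)\neq 0$ no longer directly forces $M^{\g_1}\cap M(\chi_0)\neq 0$, so I cannot apply (i) verbatim. Instead I would use the structure $K(N)\simeq \Lambda(\g_{-1})\otimes N$ as a $\g_0$-module together with Proposition \ref{prop-fin-mult}. Concretely: if $M(\chi_0)\neq 0$ then, viewing $M$ as a $\g_0$-module and noting $M$ is a quotient of $K(N)\simeq\Lambda(\g_{-1})\otimes N$, we get $[\Lambda(\g_{-1})\otimes N](\chi_0)\neq 0$; by Proposition \ref{prop-fin-mult}(ii) applied to $V=\Lambda(\g_{-1})$ (a finite-dimensional $\g_0$-module), the restriction $\chi_0$ lies in a finite set $S(\chi_0')$ only for $\chi_0'$ in the finite set $S^{-1}(\chi_0)$, i.e. $N(\chi_0')\neq 0$ for some $\chi_0'$ in a fixed finite subset of $\hat\Gamma_0$ depending only on $\chi_0$. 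Applying part (i) (or directly the finiteness of simple Gelfand-Tsetlin $\g_0$-modules over each character) to each of these finitely many $\chi_0'$ yields finitely many $N$, hence finitely many $M$.

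Writing it out, the proof is short:

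\begin{proof}
(i) Let $M$ be an irreducible Gelfand-Tsetlin $\g$-module with $M^{\g_1}\cap M(\chi_0)\neq 0$. By Theorem \ref{thm-GTs-adj}(i) there is a unique irreducible Gelfand-Tsetlin (with respect to $\Gamma_0$) $\g_0$-module $N$ such that $M$ is the unique irreducible quotient of $K(N)$. Since $N\cong M^{\g_1}/(\text{radical})$ embeds in $M^{\g_1}$ as the generating $\g_0$-submodule $M_0$, and $M_0(\chi_0)=M_0\cap M(\chi_0)\supset$ the image of $M^{\g_1}\cap M(\chi_0)\neq 0$, we conclude $N(\chi_0)\neq 0$. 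By \cite{O} (see also \cite[Theorem 4.12]{FO}), for the Gelfand-Tsetlin subalgebra $\Gamma_0\simeq\Gamma(m)\otimes\Gamma(n)$ of $U(\g_0)$ there are only finitely many non-isomorphic irreducible Gelfand-Tsetlin $\g_0$-modules $N$ with $N(\chi_0)\neq 0$. Since the assignment $M\mapsto N$ is injective on isomorphism classes (by uniqueness of the irreducible quotient of $K(N)$ and uniqueness of $N$), there are only finitely many such $M$.

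(ii) Let $M$ be an irreducible Gelfand-Tsetlin $\g$-module with $M(\chi_0)\neq 0$, and let $N$ be as in Theorem \ref{thm-GTs-adj}(i), so that $M$ is a quotient of $K(N)\simeq\Lambda(\g_{-1})\otimes N$ as a $\g_0$-module. Then $[\Lambda(\g_{-1})\otimes N](\chi_0)\neq 0$. Applying Proposition \ref{prop-fin-mult}(i)--(ii) with the finite-dimensional $\g_0$-module $V=\Lambda(\g_{-1})$, the set $S^{-1}(\chi_0)=\{\theta\mid \chi_0\in S(\theta)\}$ is finite, and $N(\theta)\neq 0$ for some $\theta\in S^{-1}(\chi_0)$. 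By part (i) applied to each of the finitely many $\theta\in S^{-1}(\chi_0)$ there are finitely many possibilities for $N$, hence finitely many possibilities for $M$.
\end{proof}

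The main obstacle I anticipate is purely bookkeeping: making sure that the finiteness statement for $\g_0$-modules from \cite{O} is stated in the form "finitely many simple Gelfand-Tsetlin modules over a fixed fiber of $\Gamma_0$" rather than just "finite multiplicities," and checking carefully in (i) that $N(\chi_0)\neq 0$ genuinely follows from $M^{\g_1}\cap M(\chi_0)\neq 0$ — one must verify that the canonical $\g_0$-map $M_0\hookrightarrow M^{\g_1}$ is compatible with the $\Gamma_0$-action and that passing to the irreducible quotient $N$ of $M_0$ (if $M_0$ is not already simple) does not kill the generalized $\chi_0$-eigenspace. The rest is a formal consequence of Theorem \ref{thm-GTs-adj} and Proposition \ref{prop-fin-mult}.
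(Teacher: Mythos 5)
Your proposal follows essentially the same route as the paper: part (ii) is the paper's argument verbatim (realize $K(N)\simeq\Lambda(\g_{-1})\otimes N$ as a $\g_0$-module, use Proposition~\ref{prop-fin-mult}(i)--(ii) to reduce to finitely many $\theta\in S^{-1}(\chi_0)$ with $N(\theta)\neq 0$, then invoke the $\g_0$-finiteness and the injectivity of $M\mapsto N$). For part (i), which the paper dispatches as ``immediate from Theorem~\ref{thm-GTs-adj}(i)'', you correctly flag the one point requiring care, namely why $M^{\g_1}\cap M(\chi_0)\neq 0$ forces $N(\chi_0)\neq 0$, but your written argument does not actually close it: the inclusion $M_0(\chi_0)\supset$ (image of $M^{\g_1}\cap M(\chi_0)$) is not justified and could a priori fail if that intersection landed in a ``radical''. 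The clean way to close the gap is to note that $M^{\g_1}=N$ for irreducible $M$. Indeed $d\in Z(\g_0)$ acts by a scalar $c$ on the irreducible $\g_0$-module $N$, so $d$ is semisimple on $K(N)=U(\g_{-1})\otimes N$ with eigenvalues $c, c-1,\dots$, hence on its quotient $M$; the top eigenspace $M_c$ is the image of $1\otimes N$, and any $v\in M^{\g_1}$ of $d$-degree $<c$ would give $M=U(\g)v=U(\g_{-1})U(\g_0)v$ of top $d$-degree $<c$, a contradiction with irreducibility. Thus $M^{\g_1}=M_c\cong N$, the two nonvanishing conditions coincide, and your part (i) goes through with no radical to worry about.
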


\begin{proof} The first statement is an immediate consequence of Theorem  \ref{thm-GTs-adj} (i).

  By the finiteness theorem for $\g_0$ (\cite{O}), for any $\theta\in \hat{\Gamma}_0$ there are finitely many up to isomorphism irreducible Gelfand-Tsetlin modules $N$
  such that $N(\theta_0)\neq 0$. We claim that there are finitely many $N$ such that $K(N)(\chi_0)\neq 0$. For this we recall that $K(N)$ is isomorphic to
  $N\otimes V$ for $V=\Lambda(\g_{-1})$. By  Proposition \ref{prop-fin-mult} (i), $K(N)(\chi_0)\neq 0$ implies that $N(\theta_0)\neq 0$ for some
  $\theta_0\in S^{-1}(\chi_0)$. By Proposition \ref{prop-fin-mult} (ii), the set $S^{-1}(\chi)$ is finite, hence the statement follows from the above
  mentioned result for
  $\g_0$-modules.
\end{proof}

\

\subsection{ Equivalence of subcategories  }

Recall that  the category of representations
of a basic classical Lie superalgebra  of type I with a fixed typical central character is
equivalent to the category of representations of its even part with a suitable central
character \cite{PS},\cite{G}. Due to Proposition \ref{prop-I(M)}  this equivalence restricts to the Gelfand-Tsetlin categories $\mathcal M(\Gamma)$ and $\mathcal M(\Gamma_0)$.  Namely we have the following.

Let $\xi\in \hat{Z}(\g)$ and $\mathcal M^{\xi}(\Gamma)$ denote the full subcategory of $\mathcal M(\Gamma)$ consisting of modules admitting
the central character $\xi$. Let $\xi_0:=\phi^*(\xi)$ and  $\mathcal M^{\xi_0}(\Gamma_0)$ the subcategory of Gelfand-Tsetlin $\g_0$-modules which admit the central
character $\xi_0$.

Recall that $\xi$ is typical if  $\xi$ is the central character of a Verma module with
typical highest weight.

It is shown in \cite{PS} that if $\xi$ is typical then the functors $\bf R$ and $\bf K$ define equivalence between the categories of $\g$-modules admitting $\xi$ and
the category of $\g_0$-modules admitting $\xi_0$. Since $\bf K$ and $\bf R$ restrict to the functors
$$\bf K_0:\mathcal M^{\xi_0}(\Gamma_0)\to \mathcal M^{\xi}(\Gamma),\quad \bf R_0:\mathcal M^{\xi}(\Gamma)\to \mathcal M^{\xi_0}(\Gamma_0),$$
we obtain the following theorem.

\begin{theorem}\label{thm-equiv}
If  $\xi\in \hat{Z}(\g)$ is typical then $\bf K_0$ and $\bf R_0$ define the equivalence between the categories $\mathcal M^{\xi}(\Gamma)$ and  $\mathcal M^{\xi_0}(\Gamma_0)$.
\end{theorem}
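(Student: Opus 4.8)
The plan is to bootstrap the known equivalence of Penkov--Serganova \cite{PS} (between $\g$-modules and $\g_0$-modules with matching typical central characters) down to the Gelfand--Tsetlin subcategories, using the fact that the functors $\bf K$ and $\bf R$ already preserve these subcategories. First I would recall precisely what \cite{PS} gives: for typical $\xi$, the functors $\bf K$ and $\bf R$ are mutually inverse equivalences between the full subcategory of $\g$-modules with central character $\xi$ and the full subcategory of $\g_0$-modules with central character $\xi_0=\phi^*(\xi)$; in particular the adjunction unit $M_0\to\bf R\bf K(M_0)$ and counit $\bf K\bf R(V)\to V$ are isomorphisms for all objects in these categories. (Concretely, for typical $\xi$ one has $K(N)=L(N)$, i.e.\ the Kac module is already simple, and $\bf R\bf K(N)\cong N$.)

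Next I would check that $\bf K_0$ and $\bf R_0$ are well defined as functors between the subcategories in question, i.e.\ that they land in $\mathcal M^{\xi}(\Gamma)$ and $\mathcal M^{\xi_0}(\Gamma_0)$ respectively. For $\bf K_0$: given $N\in\mathcal M^{\xi_0}(\Gamma_0)$, we already know $K(N)\in\mathcal M(\Gamma)$ (this is the content of the paragraph defining $\bf K_0$, which relies on Corollary \ref{cor-lifting} and finite generation); and $K(N)$ admits the central character $\xi$ because $zv=\phi(z)v$ on the generating copy of $N$, where $z\in Z(\g)$ and $\phi^*(\xi)=\xi_0$, so $Z(\g)$ acts by $\xi$ on generators and hence on all of $K(N)$. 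For $\bf R_0$: given $V\in\mathcal M^{\xi}(\Gamma)$, $R(V)=V^{\g_1}$ is a Gelfand--Tsetlin $\g_0$-module with respect to $\Gamma_0$ (since $\phi\colon\Gamma\to\Gamma_0$ is surjective by Proposition \ref{prop-HC-surj}, and $R(V)$ is finitely generated over $\g_0$ because $V$ is finitely generated over $\g$ and $U(\g)$ is a finite module over $U(\g_0)$), and on $V^{\g_1}$ the center $Z(\g_0)$ acts through $\phi$ applied to the $Z(\g)$-action, hence by $\xi_0$.

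Then I would assemble the equivalence. Since $\mathcal M^{\xi}(\Gamma)$ is a full subcategory of the category of $\g$-modules with central character $\xi$, and $\mathcal M^{\xi_0}(\Gamma_0)$ a full subcategory of $\g_0$-modules with character $\xi_0$, and since $\bf K, \bf R$ restrict to functors $\bf K_0, \bf R_0$ between these full subcategories, the unit and counit isomorphisms of the $(\bf K,\bf R)$-equivalence (Proposition \ref{prop-adjoint} together with \cite{PS}) restrict verbatim to natural isomorphisms $\operatorname{id}_{\mathcal M^{\xi_0}(\Gamma_0)}\xrightarrow{\sim}\bf R_0\bf K_0$ and $\bf K_0\bf R_0\xrightarrow{\sim}\operatorname{id}_{\mathcal M^{\xi}(\Gamma)}$. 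Thus $\bf K_0$ and $\bf R_0$ are mutually inverse equivalences, which is the assertion.

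The only genuine point requiring care — and hence the expected main obstacle — is verifying that the two subcategories are \emph{full} subcategories closed under the relevant operations, so that restricting an equivalence of larger categories automatically yields an equivalence of the smaller ones. Concretely this means confirming: (a) $\bf K_0$ and $\bf R_0$ are well defined (done above via Proposition \ref{prop-HC-surj}, Corollary \ref{cor-lifting}, Theorem \ref{g0-GT-finite-length}, and finite generation), and (b) the Gelfand--Tsetlin condition and the central-character condition are each preserved in both directions, which together with fullness forces the unit/counit of the ambient equivalence to restrict. Everything else is a formal consequence of the general principle that an adjoint equivalence restricts to any pair of full subcategories that the functors preserve, so no further computation is needed.
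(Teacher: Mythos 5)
Your proposal is correct and follows essentially the same route as the paper: invoke the Penkov--Serganova equivalence given by $(\mathbf K,\mathbf R)$ on typical blocks, observe (as the paper does in the preamble to the theorem and in the paragraph defining $\mathbf K_0$ and $\mathbf R_0$) that these functors restrict to the Gelfand--Tsetlin subcategories with the matching central characters, and conclude that the restriction of an equivalence to full preserved subcategories is again an equivalence. Your treatment is more explicit about the unit/counit and the finite-generation of $V^{\g_1}$ over $U(\g_0)$ (which uses Noetherianity of $U(\g_0)$), but the substance is identical to the paper's argument.
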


\

\section{Gelfand-Tsetlin bases}
Given a tuple $\la$ of complex numbers of the form $\la=(\la_{1},\la_{2},\ \ldots,\la_{m+n})$, the irreducible highest weight module $L(\lambda)$ of the Lie superalgebra $\gl(m|n)$ with the highest weight $\lambda$ is generated by a nonzero vector $v$ satisfying the conditions:

\begin{equation}
\begin{split}
&h_i v=\lambda_i v, 1\leq i\leq m+n, \\
&e_i v=0,  1\leq i\leq m+n-1.
\end{split} 
\end{equation}

Recall that the  weight $\lambda$ is dominant if 
\begin{equation}
\la_i-\la_{i+1}\in \mathrm{Z}_{\geq 0}, \text{ for }i=1, \ldots,m+n-1,  \quad i\neq m.
\end{equation}
The dominant weight $\la$ (and the module $L(\la)$) is typical if  
$(\lambda+\rho,\varepsilon_i-\delta_j)\neq 0$ for  $1\leq i\leq m,1\leq j\leq n$.

Set
\begin{equation}
l_{i}=\la_{i}-i+1, (1\leq i\leq m); \quad l_{j}=-\la_{j}+j-2m, (m+1\leq j\leq m+n).
\end{equation}
Since $(\lambda+\rho,\varepsilon_i-\delta_j)=l_{i}-l_{j}$,  
the condition for typicality is equivalent to $l_{i}\neq l_{j}$ for  $1\leq i\leq m <j\leq m+n. $

For $a, b \in \C$ such that $b-a$ is nonnegative integer, we set 
$$[a;b]=\{a+s \ | 0\leq s\leq b-a\}.$$

The dominant weight $\la$ (and the module $L(\la)$) is called \emph{essentially typical} if  
\begin{equation}
\{l_1,l_2,\ldots,l_m\}\cap [l_{m+1}; l_{m+n}]=\emptyset.
\end{equation}

Clearly,  every essentially typical weight is typical.

%


Our main combinatorial device is an array of complex numbers 
 $\Lambda=(\lambda_{ij})$ presented in the following form:
 
 \begin{equation}\label{GT pattern}
\begin{array}{cccccccc}
   \la_{m+n,1}   &  \cdots      & \la_{m+n,m} & \la_{m+n,m+1} & \cdots & \la_{m+n-1,m+n} & \la_{m+n,m+n} \\
  \la_{m+n-1,1} &  \cdots     & \la_{m+n-1,m} & \la_{m+n-1,m+1} & \cdots & \la_{m+n-1,m+n-1} & \\
  \vdots    &  \vdots & \vdots & \vdots   & \reflectbox{$\ddots$}    \\
  \la_{m+1,1} &  \cdots  & \la_{m+1,m} & \la_{m+1,m+1}    \\
  \la_{m1} &  \cdots  & \la_{mm}                           \\
  \la_{m-1,1} &  \cdots                             \\
  \vdots  &  \reflectbox{$\ddots$}                              \\
  \la_{11}\\
\end{array}
\end{equation}
 We will call  $\Lambda$ a  \emph{Gelfand-Tsetlin tableau}.  Given $\Lambda=(\lambda_{ij})$
we set

 \begin{equation}
l_{ki}=\la_{ki}-i+1, (1\leq i\leq m); \quad l_{kj}=-\la_{kj}+j-2m, (m+1\leq j\leq k).
\end{equation}

For  an essentially typical highest weight $\la$  a basis of the irreducible module $L(\la)$
  parameterized by Gelfand-Tsetlin tableaux was constructed in   \cite{Palev1989a}. We will use the following modified version of these formulas.

\begin{theorem}
Let $\la$ be an essentially typical highest weight. Then $L(\la)$
admits a basis $\xi_{\La}$ parameterized by all Gelfand-Tsetlin  tableaux $\La$ which satisfy the
following conditions:
\begin{enumerate}
  \item $\lambda_{m+n,i}=\lambda_i, \quad 1\leq i\leq m+n$;
  \item  $\la_{k,i}-\la_{k-1,i}\equiv\theta_{k-1,i}\in\{0,1\}, 1\leq i\leq m;m+1\leq k\leq m+n$;
  \item $\la_{ki}-\la_{k,i+1}\in \mathrm{Z}_{\geq 0}, 1\leq i\leq m-1;m+1\leq k\leq m+n-1$;
  \item  $\la_{k+1,i}-\la_{ki}\in \mathrm{Z}_{\geq 0}$ {\it and} $\la_{k,i}-\la_{k+1,i+1}\in \mathrm{Z}_{\geq 0},$
$1\leq i\leq k\leq m-1$ \text{ or } $m+1\leq i\leq k\leq m+n-1$.
\end{enumerate}
The action of the generators of $\gl_{m,n}$
is given by the formulas
\begin{equation}\label{action h}
h_{k}\xi_{\La}=\left(\sum_{i=1}^{k}\la_{kj}-\sum_{j=1}^{k-1}\la_{k-1,j}\right)\xi_{\La},\quad 1\leq k\leq m+n;
\\
\end{equation}

\begin{equation}
\noindent e_{k}\xi_{\La}=-\sum_{i=1}^{k}\frac{\Pi_{j=1}^{k+1}(l_{k+1,j}-l_{ki}) }
  {\Pi_{j\neq i,j=1}^{k} (l_{kj}-l_{ki}) }\xi_{\La+\delta_{ki}},
\quad1\leq k\leq m-1;
\end{equation}

\begin{equation}
f_{k}\xi_{\La}=\sum_{i=1}^{k}\frac{\Pi_{j=1}^{k-1}(l_{k-1,j}-l_{ki})}
{\Pi_{j\neq i,j=1}^{k}(l_{kj}-l_{ki})}\xi_{\La-\delta_{ki}},
\quad  1\leq k\leq m-1;
\end{equation}

\begin{equation}
\begin{split}
e_{m}\xi_{\La}=\sum_{i=1}^{m}\theta_{mi}(-1)^{i-1}(-1)^{\theta_{m1}+\ldots+\theta_{m,i-1}}
\\
\times  \frac{\Pi_{1\leq j< i} (l_{mj}-l_{mi}-1)}
  {\Pi_{i<j\leq m} (l_{mj}-l_{mi})
    \Pi_{j\neq i,j=1}^{m}(l_{m+1,j}-l_{mi}-1)}
    \xi_{\La+\delta_{mi}},
  \\
\end{split}
\end{equation}

\begin{equation}
\begin{split}
f_{m} \xi_{\La}=\sum_{i=1}^{m}(1-\theta_{mi})(-1)^{i-1}(-1)^{\theta_{m1}+\ldots+\theta_{m,i-1}}
\\
\times  \frac{(l_{m,i}-l_{m+1,m+1})\Pi_{ i<j\leq m} (l_{mj}-l_{mi}+1)\Pi_{j=1}^{m-1}(l_{m-1,j}-l_{mi})}
  {\Pi_{1\leq j< i} (l_{mj}-l_{mi})} \xi_{\La-\delta_{mi}},
\end{split}
\end{equation}

\begin{equation}
\begin{split}
e_k \xi_{\La}=\sum_{i=1}^{m}\theta_{ki}(-1)^{\theta_{k1}+\ldots+\theta_{k,i-1}
+\theta_{k-1,i+1}+\ldots+\theta_{k-1,m}}(1-\theta_{k-1,i})
\\
\times
\prod_{j\neq i,j =1}^{m}\left(\frac{l_{kj}-l_{ki}-1}{l_{k+1,j}-l_{ki}-1}\right)
\xi_{\La+\delta_{ki}}
\\
\\
-\sum_{i=m+1}^{k} 
\Pi_{j=1}^{m}\left(\frac{(l_{kj}-l_{ki})(l_{kj}-l_{ki}+1)}{(l_{k+1,j}-l_{ki})(l_{k-1,j}-l_{ki}+1)} \right)
\\
\times
  \frac{\Pi_{j=m+1}^{k+1}(l_{k+1,j}-l_{ki})}
  {\Pi_{j\neq i,j=m+1}^{k} (l_{kj}-l_{ki})}\xi_{\La+\delta_{ki}} ,
\quad  m+1\leq k\leq m+n-1;
\\
\end{split}
\end{equation}

\begin{equation}\label{action f}
\begin{split}
f_{k}\xi_{\La}=
\sum_{i=1}^{m}\theta_{k-1,i}(-1)^{\theta_{k1}+\ldots+\theta_{k,i-1}+\theta_{k-1,i+1}+\ldots+\theta_{k-1,m}}(1-\theta_{ki})
\\
\times
\prod_{j\neq i=1}^{m}\left(\frac{l_{kj}-l_{ki}+1}{l_{k-1,j}-l_{ki}+1}\right)
\frac{\Pi_{j=m+1}^{k+1}(l_{k+1,j}-l_{ki})\Pi_{j=m+1}^{k-1}(l_{k-1,j}-l_{ki}+1)}
  {\Pi_{j=m+1}^{k} (l_{kj}-l_{ki})(l_{kj}-l_{ki}+1)}\xi_{\La-\delta_{ki}}
  \\
  + \sum_{i=m+1}^{k}
\frac{\prod_{j=m+1}^{k-1}(l_{k-1,j}-l_{ki})}{\prod_{j\neq i,j =m+1}^{k}(l_{k,j}-l_{ki})}
\xi_{\La-\delta_{ki}} \quad
 m+1\leq k\leq m+n-1.
\end{split}
\end{equation}

The arrays $\La\pm \delta_{ki}$ are obtained from $\La$ by replacing $\la_{ki}$ by $\la_{ki}\pm1$.  We assume  that
$\xi_{\La}=0$ if the array $\La$ does not satisfy the conditions of the theorem.
\end{theorem}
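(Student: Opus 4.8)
The plan is to prove two assertions in turn: (a) the formulas \eqref{action h}--\eqref{action f} equip the space $V$ with basis $\{\xi_\La\}$ indexed by the admissible tableaux with the structure of a $\gl(m|n)$-module, and (b) this module is isomorphic to $L(\la)$. I would split (a) into well-definedness of the operators and verification of the defining relations, and treat (b) by producing a highest weight vector and a dimension count.

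\emph{Well-definedness.} One must check that each operator maps $V$ into $V$ and involves no division by zero. The betweenness conditions (3)--(4) force the entries $l_{k1},l_{k2},\dots$ of each fixed row of an admissible tableau to be pairwise distinct, which takes care of the denominators $\prod_{j\ne i}(l_{kj}-l_{ki})$ in the formulas for $e_k,f_k$ with $k\ne m$. For the row $k=m$ and for $k>m$ the formulas also contain factors $l_{kj}-l_{ki}\pm1$ and $l_{m+1,j}-l_{mi}-1$ in denominators, and here the essential typicality hypothesis $\{l_1,\dots,l_m\}\cap[l_{m+1};l_{m+n}]=\emptyset$ is exactly what is needed: it keeps every even-column value $l_{mi}$ ($i\le m$) away from the odd-column values and from their integer neighbours, so no denominator vanishes on an admissible tableau. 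Finally, the numerator factors $\theta_{mi}$, $1-\theta_{mi}$, $\theta_{ki}$, $1-\theta_{k-1,i}$ together with the linear factors such as $l_{mj}-l_{mi}\mp1$ and $l_{m,i}-l_{m+1,m+1}$ vanish precisely when $\La\pm\delta_{ki}$ fails one of (1)--(4); combined with the convention $\xi_\La=0$ off the admissible set, this shows each operator preserves $V$.

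\emph{Relations.} The relations $[h_i,h_j]=0$, $[h_i,e_j]=(\delta_{ij}-\delta_{i,j+1})e_j$ and $[h_i,f_j]=-(\delta_{ij}-\delta_{i,j+1})f_j$ are read off immediately from \eqref{action h}. The relations $[e_i,f_j]=0$ for $i\ne j$, $[e_i,e_j]=[f_i,f_j]=0$ for $|i-j|>1$, $[e_m,e_m]=[f_m,f_m]=0$, and the (super) Serre relations all involve operators acting on at most three consecutive rows of the tableau, so each reduces to an identity of rational functions in the entries of a sub-pattern for some $\gl(p|q)$ with $p+q\le3$. For the nodes $k\ne m$ one may check these identities via a diagonal rescaling $\xi_\La\mapsto c_\La\,\xi_\La$, with $c_\La$ a suitable product of Gamma-type factors, under which \eqref{action h}--\eqref{action f} turn into the formulas of \cite{Palev1989a}, where the relations are known; alternatively a direct computation works, since all identities are rational in the $l_{ki}$ and essentially typical integral dominant weights are Zariski dense. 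The genuinely new and hardest point is the odd node $k=m$: the relation $[e_m,f_m]=h_m+h_{m+1}$, with its plus sign, and the relations $[e_m,[e_{m\pm1},[e_m,e_{m\mp1}]]]=0$; here the exchange $\theta_{mi}\leftrightarrow1-\theta_{mi}$ between $e_m$ and $f_m$ replaces the usual length-two recursion in the Gelfand--Tsetlin calculus by a square-zero relation, and these must be verified by hand on the $\gl(2|1)$- and $\gl(1|2)$-sub-patterns. This is the main obstacle of the proof.

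\emph{Identification with $L(\la)$.} Let $\La^0$ be the admissible tableau with all $\theta$'s equal to $0$, so that each row of $\La^0$ is the corresponding initial segment $(\la_1,\dots,\la_k)$; conditions (2) and (4) show this is the unique such tableau. From the formulas $h_k\xi_{\La^0}=\la_k\xi_{\La^0}$, and $e_k\xi_{\La^0}=0$ for every $k$ (for $k<m$ because $\La^0+\delta_{ki}$ violates (4); for $k=m$ because every $\theta_{mi}$ vanishes; for $k>m$ because the $\theta$-terms vanish and the remaining targets $\La^0+\delta_{ki}$ again violate (4)), so $\xi_{\La^0}$ is a highest weight vector of weight $\la$. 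A standard induction on tableaux shows that $\xi_{\La^0}$ generates $V$ under the lowering operators, hence $V$ is a highest weight module and $L(\la)$ is a quotient of $V$. To conclude equality I would count dimensions: the $mn$ labels $\theta_{ki}$ ($1\le i\le m$, $m+1\le k\le m+n$) together with, for each admissible choice of them, a classical Gelfand--Tsetlin pattern for the even part, parameterize the admissible tableaux, giving $\dim V=2^{mn}\dim L_0(\la)$. Since an essentially typical $\la$ is typical, $L(\la)=K(\la)$ by the discussion in the Preliminaries, and $\dim K(\la)=\dim\big(\Lambda(\g_{-1})\otimes L_0(\la)\big)=2^{mn}\dim L_0(\la)$; comparing this with the surjection $V\twoheadrightarrow L(\la)$ yields $V\cong L(\la)$.
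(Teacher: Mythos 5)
The paper does not give a first-principles proof of this theorem at all: immediately after the statement it records that the formulas are obtained from those of \cite{Palev1989a} by renormalizing each basis vector by $\sqrt{|a(\La)|}$, and it then writes down the explicit factors $a_k(\La)$. In other words the paper's argument is: Palev already constructed the essentially typical module $L(\la)$ with a Gelfand--Tsetlin basis and explicit matrix coefficients, and the displayed formulas are exactly Palev's formulas in a rescaled basis. Your proposal instead attempts a from-scratch verification, and you do touch on the key point in passing when you say the formulas become Palev's after a ``diagonal rescaling $\xi_\La\mapsto c_\La\xi_\La$''. But you treat this as merely one of several options for checking relations at the nodes $k\ne m$, and you then declare that the $k=m$ relations (the square-zero, the $[e_m,f_m]=h_m+h_{m+1}$, and the fourth-order super Serre relation) ``must be verified by hand'' as ``the main obstacle of the proof.'' This is where the proposal has a genuine gap: those relations are never actually verified, and they are precisely the part where a direct computation is delicate. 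Moreover, if you took your own rescaling observation seriously and determined the factors $c_\La$ explicitly (the paper's $\sqrt{|a(\La)|}$), it would automatically conjugate all the relations -- including the ones at the odd node -- to Palev's already-proved identities, eliminating the ``by hand'' step entirely. As written, you have noticed the right shortcut but not used it, and left the hardest calculation undone.

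A smaller but still real issue is in the identification step. You define $\La^0$ as the tableau ``with all $\theta$'s equal to 0'' and claim conditions (2), (4) make it unique; but the $\theta$-labels only constrain the entries $\la_{ki}$ with $m+1\le k\le m+n$, $1\le i\le m$, so many admissible tableaux have all $\theta=0$ and differ in the lower rows $k\le m$ (and in the odd-column part of the upper rows). The correct $\La^0$ is the one whose every row is maximal, and you should say so. Finally, the dimension identity $\dim V=2^{mn}\dim L_0(\la)$ is a nontrivial combinatorial fact -- it needs essential typicality to guarantee that all $2^{mn}$ configurations of the $\theta$'s yield independent branches, and the enumeration does not literally factor as ``choose $\theta$'s, then choose an even GT pattern for fixed $\la$'' since the $m$-th row depends on the $\theta$'s. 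This count can be made rigorous (it is equivalent to the complete reducibility built into the definition of essential typicality), but as stated it is an assertion, not an argument.
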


 The formulas in the theorem above can be obtained from  \cite{Palev1989a} by multiplying the basis elements by $\sqrt{|a(\La)|}$, where $a(\La)=\prod_{k=1}^{m+n}a_k(\La)$.

\begin{equation}
\begin{split}
a_k(\Lambda)=&\prod_{1\leq i\leq j<k}\frac{(l_{ki}-l_{k-1,j})!}{(l_{k-1,i}-l_{k-1,j})!}\prod_{1\leq i<j\leq k}\frac{(l_{ki}-l_{kj}-1)!}{(l_{k-1,i}-l_{kj}-1)!},
\quad 1\leq k\leq m,
\\
a_k(\Lambda)=&
\prod_{1\leq i\leq j\leq m,m+1\leq i\leq j<k}\frac{(l_{k-1,j}-l_{ki})!}{(l_{k-1,j}-l_{k-1,i})!}\\
&\times 
\prod_{1\leq i<j\leq m,m+1\leq i<j\leq k}\frac{(l_{kj}-l_{ki}-1)!}{(l_{kj}-l_{k-1,i}-1)!}
,
\quad m+1\leq k\leq m+n.
\end{split}
\end{equation}

Here the function $a!$ is defined by

\begin{equation}
a!=
\left\{ \begin{aligned}
&1\cdot2 \cdots a, \text{ for } a\in \mathbb Z_{>0} ,\\
&1,   \text{ for } a=0 \text{ or }\pm 1,\\
&\frac{1}{(a+1)(a+2)\cdots(-1)}, \text{ for } a\in \mathbb Z_{<0}.
\end{aligned} \right.
\end{equation}

In the following sections we will identify $ \xi_{\La}$ with the tableau $[l]$
for convenience.

\section{Quasi typical modules}
\subsection{Relation modules for $\gl(n)$} 
Let us recall the construction of  relation modules for $\gl(n)$ \cite{FRZ}.
Set $\mathfrak{V}:=\{(i,j)\ |\ 1\leq j\leq i\leq n\}$ and consider the following subsets of $\mathfrak{V}\times\mathfrak{V}$:
\begin{align}
\mathcal{R}^+ &:=\{((i,j);(i-1,t))\ |\ 2\leq j\leq i\leq n,\ 1\leq t\leq i-1\}\\
\mathcal{R}^- &:=\{((i,j);(i+1,s))\ |\ 1\leq j\leq i\leq n-1,\ 1\leq s\leq i+1\}\\
\mathcal{R}^{0}&:=\{((n,i);(n,j))\ |\ 1\leq i\neq j\leq n\}
\end{align}

Set $\mathcal{R}:=\mathcal{R}^{-}\cup\mathcal{R}^{0}\cup\mathcal{R}^{+}$. Any subset $\mathcal{C}\subseteq \mathcal{R}$ will be called a {\it set of relations}. Let $\mathcal{C}^0=\mathcal{C}\cap \mathcal{R}^{0}$ and $\mathcal{C}^{\pm}=\mathcal{C}\cap \mathcal{R}^{\pm}$.

To any $\mathcal{C}\subseteq \mathcal{R}$ we can associate a directed graph $G(\mathcal{C})$ with the set of vertices  $\mathfrak{V}$ as follows: there is an arrow from $(i,j)$ to $(r,s)$ if and only if $((i,j);(r,s))\in\mathcal{C}$.  Given $(i,j),\ (r,s)\in\mathfrak{V}$ we will write $(i,j)\succeq_{\mathcal{C}}  (r,s)$ if there exists a path in $G(\mathcal{C})$ starting in $(i,j)$ and ending in $(r,s)$.

\begin{definition}
Let $\mathcal{C}$ be any  set of relation and $[l]=(l_{ij})$, $1\leq i\leq n$, $1\leq j\leq i$, a Gelfand-Tsetlin tableau for $\gl(n)$.

\begin{itemize}
\item[(i)] We will say that $[l]$ satisfies $\mathcal{C}$ if:
\begin{itemize}
\item[$\bullet$] $l_{ij}-l_{rs}\in \mathbb{Z}_{\geq 0}$ for any $((i,j); (r,s))\in \mathcal{C}^+\cup\mathcal{C}^0$.
\item[$\bullet$] $l_{ij}-l_{rs}\in \mathbb{Z}_{> 0}$ for any $((i,j); (r,s))\in \mathcal{C}^-$.
\item[$\bullet$] For any $1\leq k\leq n-1$ we have, $l_{ki}-l_{kj}\in \mathbb{Z} $ if and only if $(k,i)$ and $(k,j)$ are in the same connected component of $G(\mathcal{C})$.
\end{itemize}
\item[(ii)] If $[l]$ satisfies $\mathcal{C}$ then ${\mathcal B}_{\mathcal{C}}([l])$ will be  the set of all 
$[l+z]$, where $z\in {\mathbb Z}^{\frac{n(n+1)}{2}}$ satisfying $\mathcal{C}$ with $z_{ni}=0$ for $1\leq i\leq n$. By $V_{\mathcal{C}}([l])$ we denote the complex vector space spanned by ${\mathcal B}_{\mathcal{C}}([l])$.
\end{itemize}
\end{definition}

We call $\mathcal{C}$ \emph{noncritical} if $l_{ki}\neq l_{kj}$, $1\leq i< j\leq k\leq n-1$, for all $[l]$ that satisfies $\mathcal{C}$.

 Following \cite{FRZ}, Definition 4.18, 
 we call a set of relations $\mathcal{C}$ \emph{reduced} if for every $(k,j)\in \mathfrak{V}(\mathcal{C})$ the following conditions
are satisfied:
\begin{itemize}
\item[(i)] There exists at most one $i$ such that $((k,j);(k+1,i))\in \mathcal{C}$;
\item[(ii)] There exists at most one $i$ such that $((k+1,i);(k,j))\in\mathcal{C}$;
\item[(iii)] There exists at most one $i$ such that $((k,j);(k-1,i))\in\mathcal{C}$;
\item[(iv)] There exists at most one $i$ such that $((k-1,i);(k,j))\in \mathcal{C}$;
\item[(v)] No relations in the top row follow from other relations.
\end{itemize}

\

Let $\mathcal{C}$ be a set of relations. For $i<j$ we say that $(k,i)$ and $(k,j)$ form an adjoining pair if they are in the same indecomposable subset of $\mathcal{C}$ and there is no pair $(k,s)$ such that
$(k,i)\succeq_{\mathcal{C}}(k,s)\succeq_{\mathcal{C}}(k,j)$.

Denote by $\mathfrak{F}$ the set of all indecomposable sets of relations $\mathcal{C}$ satisfying the following condition:

\begin{itemize}
\item[(i)] $\mathcal{C} $ is noncritical and reduced;
\item[(ii)]  $\mathcal{C} $ has no subsets of the form { $\{((k,i); (k+1,t)),\ ((k+1,s);(k,j))\}$   }with $i<j$ and $s<t$;
\item[(iii)] If $(n,i)$ and $(n,j)$, $i\neq j$, are in the same indecomposable subset of $\mathcal{C}$, then $(n,i)\succeq_{\mathcal{C}}(n,j)$ or $(n,j)\succeq_{\mathcal{C}}(n,i)$.
\item[(iv)] For every adjoining pair $(k,i)$ and $(k,j)$, $1\leq k\leq n-1$, there exist $p, q$ such that $\mathcal{C}_{1}\subseteq\mathcal{C}$
or, there exist $s<t$ such that $\mathcal{C}_{2}\subseteq\mathcal{C}$, where the graphs associated to $\mathcal{C}_{1}$ and $\mathcal{C}_{2}$ are as follows
\begin{center}
\begin{tabular}{c c c c}
\xymatrixrowsep{0.5cm}
\xymatrixcolsep{0.1cm}
\xymatrix @C=0.2em{
  &   &\scriptstyle{(k+1,p)}\ar[rd]   &   & \\
 \scriptstyle{G(\mathcal{C}_{1})=}  &\scriptstyle{(k,i)}\ar[rd] \ar[ru]  &    &\scriptstyle{(k,j)};   &  \\
   &   &\scriptstyle{(k-1,q)}\ar[ru]   &   & }
&\ &
\xymatrixrowsep{0.5cm}
\xymatrixcolsep{0.1cm}\xymatrix @C=0.2em {
   &   &\scriptstyle{(k+1,s)}    &   &\scriptstyle{(k+1,t)}\ar[rd]&& \\
  \scriptstyle{G(\mathcal{C}_{2})=} &\scriptstyle{(k,i)} \ar[ru]  & &   & & \scriptstyle{(k,j)} \\
   &   &   &   & &&}
\end{tabular}
\end{center}
\end{itemize}

Disconnected unions of sets of $\mathfrak{F}$ are called \emph{admissible}. We have

\begin{theorem} \cite{FRZ}\label{thm-gln}
Let $\mathcal{C}$ be an   admissible set of relations. Then for any
$[l^0]$ satisfying $\mathcal{C}$,  $V_{\mathcal{C}}([l^0])$ has a $\gl(n)$-module structure and
the action of the generators of $\gl_n$ on any basis tableau $[l]$ is given by
\begin{equation}
\begin{split}
&E_{k,k+1}[l]=-\sum_{j=1}^{k}\frac{\prod_{i} (l_{k+ 1,i}-l_{k,j})}{\prod_{i\neq j}(l_{k,i}-l_{k,j})}
[l+\delta^{kj}],\\
&E_{k+1,k}[l]=\sum_{j=1}^{k}\frac{\prod_{i} (l_{k-1,i}-l_{k,j})}{\prod_{i\neq j} (l_{k,i}-l_{k,j})}[l-\delta^{kj}],\\
&E_{kk}[l]=\left(\sum_{i=1}^{k}l_{k,i}-\sum_{i=1}^{k-1}l_{k-1,i}+k-1\right)[l]. \\
\end{split}
\end{equation}
Here $[l\pm\delta^{kj}]$ is the tableau obtained
from $[l]$ by adding $\pm 1$ to the $(k,j)$th entry of $[l]$; if
 $[l]$  is not in ${\mathcal B}_{\mathcal{C}}([l^0])$ then it is assumed to be zero.
\end{theorem}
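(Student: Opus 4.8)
The strategy is to verify that the formulas define a representation by checking, one at a time, each of the defining relations of $\gl(n)$ on an arbitrary basis tableau $[l]\in{\mathcal B}_{\mathcal{C}}([l^0])$. Since the only subtlety compared to the classical Gelfand--Tsetlin situation is that many tableaux $[l\pm\delta^{kj}]$ which would normally appear are now declared to be zero (because they violate $\mathcal{C}$), the key point is that the \emph{coefficients} in the formulas conspire to vanish precisely when the target tableau leaves ${\mathcal B}_{\mathcal{C}}([l^0])$. First I would record the action of the Cartan: from the displayed formula $E_{kk}[l]$ is diagonal, so $[E_{kk},E_{ll}]=0$ is immediate, and $[E_{kk},E_{k\pm1,k}],[E_{kk},E_{k,k+1}]$ reduce to the observation that $E_{k,k+1}$ shifts exactly the single entry $l_{kj}$ by $+1$, so the weight changes by the correct amount; this handles all relations involving the $h_i$.

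Next I would treat the relations among the raising (and, symmetrically, lowering) operators. The relations $[E_{i,i+1},E_{j,j+1}]=0$ for $|i-j|>1$ are clear because the operators move entries in non-adjacent rows and the coefficient attached to a shift in row $i$ does not depend on entries of row $j$ when $|i-j|>1$. For $|i-j|=1$ one must verify the Serre relation $[E_{i,i+1},[E_{i,i+1},E_{i\pm1,i\pm2}]]=0$; here I would use exactly the computation from the classical case (the coefficients are rational functions in the $l_{ki}$ that satisfy this identity as a formal identity), together with the fact that whenever an intermediate tableau in the double bracket falls outside ${\mathcal B}_{\mathcal{C}}([l^0])$ its coefficient is already $0$, so no spurious terms survive. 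The genuinely new input is the combinatorial one: I would invoke the structure of admissible $\mathcal{C}$ (the set $\mathfrak{F}$, noncriticality, reducedness, and conditions (i)--(iv) defining $\mathfrak{F}$) to show that if $[l]$ satisfies $\mathcal{C}$ but $[l+\delta^{kj}]$ does not, then either $\theta$-type indices force the coefficient of $[l+\delta^{kj}]$ in $E_{k,k+1}[l]$ to contain a vanishing factor $\prod_i(l_{k+1,i}-l_{kj})$, or $(k,j)$ is in a connected component of $G(\mathcal{C})$ whose internal ordering (guaranteed by condition (iii) and the adjoining-pair condition (iv)) produces a zero in the numerator. This is the heart of why ${\mathcal B}_{\mathcal{C}}([l^0])$ is preserved.

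The main relation to check is $[E_{k,k+1},E_{k+1,k}]=E_{kk}-E_{k+1,k+1}$ and, for non-adjacent rows, $[E_{k,k+1},E_{l+1,l}]=0$ when $k\neq l$. For $k=l$ this is the standard Gelfand--Tsetlin identity
\begin{equation*}
\sum_{j=1}^{k}\Bigl(\text{coeff of raising then lowering at }(k,j)\Bigr)-\sum_{j=1}^{k}\Bigl(\text{coeff of lowering then raising at }(k,j)\Bigr)=\sum_{i=1}^{k}l_{ki}-\sum_{i=1}^{k-1}l_{k-1,i}-\bigl(\text{same with }k\!+\!1\bigr),
\end{equation*}
which is a rational-function identity I would quote from the classical derivation; noncriticality of $\mathcal{C}$ guarantees the denominators $\prod_{i\neq j}(l_{ki}-l_{kj})$ are nonzero on all of ${\mathcal B}_{\mathcal{C}}([l^0])$, so the identity makes sense and the restricted sums (over $j$ with $[l\pm\delta^{kj}]$ in the basis) agree with the full sums because the omitted terms have zero coefficient. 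For $k\neq l$ the two operators act on independent rows unless $|k-l|\le 1$, and the $|k-l|=1$ case is again a short classical computation.

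The step I expect to be the main obstacle is precisely the combinatorial bookkeeping in the previous paragraph: proving that the admissibility conditions on $\mathcal{C}$ are exactly strong enough that every ``forbidden'' shift appearing in the formulas comes with a vanishing coefficient, and simultaneously that every ``allowed'' shift does \emph{not} accidentally vanish (so that the module is genuinely spanned by ${\mathcal B}_{\mathcal{C}}([l^0])$ and the bracket relations close without loss of terms). This requires a careful case analysis over the five types of local configurations permitted in $\mathfrak{F}$ and is where the detailed hypotheses (ii)--(iv) on $\mathfrak{F}$ and the reducedness conditions are used. Once that is in place, the verification of the $\gl(n)$-relations is, modulo the classical rational-function identities, routine; and since all of this appears in \cite{FRZ} I would present it as a recollection of that theorem rather than reprove it in full.
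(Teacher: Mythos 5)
The theorem you are proving is only \emph{cited} here from \cite{FRZ}; the paper gives no proof of it, but it does give a detailed proof of the $\gl(m|n)$ analogue (Theorem \ref{admissible}), which it explicitly says ``follows the idea of the proof of \cite{FRZ}, Theorem 4.33.'' That proof is the best available indication of what the $\gl(n)$ argument actually looks like, and it is genuinely different from your plan. You propose a direct verification: check each Serre relation on every basis tableau, treating the $\gl(n)$ identities as formal rational-function identities and then doing a combinatorial case analysis over the local configurations in $\mathfrak{F}$ to show that every forbidden shift comes with a vanishing coefficient. The actual argument in \cite{FRZ} avoids essentially all of that case analysis by two devices. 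First, a \emph{genericity reduction} (cf.\ Lemma \ref{generic} here): one only needs to kill the generators $g$ of the kernel on tableaux $[l]$ in general position, because the coefficients $\varphi_z$ in $g[l]=\sum_z\varphi_z(l)[l+z]$ are rational functions and vanishing on a Zariski-dense set of admissible tableaux implies vanishing. Second, the \emph{relation removal method} (cf.\ Theorem \ref{rr}): one shows that if the theorem holds for $\mathcal{C}$ then it holds for the set obtained by deleting all relations through a maximal or minimal vertex, by sending that entry to $\pm\infty$ and reading off that each $\varphi_z$ is identically zero. Starting from the ``standard'' set of relations satisfied by finite-dimensional tableaux (where the classical formulas are known to define a module), one repeatedly strips relations to reach auxiliary admissible sets $\mathcal{C}'$ tailored to each coefficient $\varphi_z$, chooses a variable tableau $[v]$ satisfying $\mathcal{C}'$ that specializes to $[l]$, and concludes $\varphi_z(l)=0$ by specialization.

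Your plan is not wrong in spirit --- you correctly identify that the content is the compatibility between the truncation of the basis and the vanishing of coefficients, and you correctly flag this as the hard step. But the direct coefficient-by-coefficient case analysis you propose is exactly what the RR-method is designed to circumvent, and it is not clear it closes on its own. For example, if raising $l_{kj}$ by $1$ violates a $\mathcal{C}^-$ relation $((k-1,t);(k,j))$ at the boundary $l_{k-1,t}-l_{kj}=1$, the numerator $\prod_i(l_{k+1,i}-l_{kj})$ of the corresponding coefficient in $E_{k,k+1}[l]$ involves only the row $k+1$ and does not obviously vanish; one needs the admissibility conditions (iv) in $\mathfrak{F}$ and an argument about the compatible row $k+1$ configuration, which your sketch does not supply. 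So treat your proposal as a different, and substantially more laborious, route whose crucial combinatorial lemma is left open; the reference's argument is shorter precisely because it replaces that lemma by the RR-method and a limit argument.
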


Module  $V_{\mathcal{C}}([l^0])$ in Theorem \ref{thm-gln} is a \emph{relation} $\gl(n)$-module.  We will extend the construction of relation modules to the Lie superalgebra $\gl(m|n)$ in the next section.

\

\subsection{Relation modules for $\gl(m|n)$}
\begin{definition}
The set $\mathcal C=(\mathcal{C}_1,\mathcal{C}_2)$ is called \emph{admissible} if $\mathcal{C}_1$
 and $\mathcal{C}_2$ are admissible sets of relations of $\gl(m)$ and $\gl(n)$ respectively.
\end{definition}

\begin{definition}Let $\mathcal C=(\mathcal{C}_1,\mathcal{C}_2)$ be admissible.
\begin{itemize}
\item[(i)] We will say that a tableau $[l]$ of $\gl(m|n)$ satisfies $\mathcal{C}_1$ if:
\begin{itemize}
\item[$\bullet$] $l_{ij}-l_{rs}\in \mathbb{Z}_{\geq 0}$ for any $((i,j); (r,s))\in \mathcal{C}_1^+$;
\item[$\bullet$] $l_{ij}-l_{rs}\in \mathbb{Z}_{> 0}$ for any $((i,j); (r,s))\in \mathcal{C}_1^-\cup\mathcal{C}_1^0$;
\item[$\bullet$] For any $1\leq k\leq n$ we have, $l_{ki}-l_{kj}\in \mathbb{Z} $ if and only if $(k,i)$ and $(k,j)$ are in the same connected component of $G(\mathcal{C}_1)$.
\end{itemize}
\item[(ii)] We will say that $[l]$ of $\gl(m|n)$ satisfies $\mathcal{C}_2$ if:
\begin{itemize}
\item[$\bullet$] $l_{ij}-l_{rs}\in \mathbb{Z}_{\leq 0}$ for any $((i,j); (r,s))\in \mathcal{C}_2^+\cup\mathcal{C}_2^0$;
\item[$\bullet$] $l_{ij}-l_{rs}\in \mathbb{Z}_{< 0}$ for any $((i,j); (r,s))\in \mathcal{C}_2^-$;
\item[$\bullet$] For any $m+1\leq k\leq m+n$ we have, $l_{ki}-l_{kj}\in \mathbb{Z} $ if and only if $(k,i)$ and $(k,j)$ are in the same connected component of $G(\mathcal{C}_2)$.
\end{itemize}
\item[(iii)] 
We say that a tableau $[l]$ of $\gl(m|n)$ satisfies $\mathcal{C}$ if:
\begin{enumerate}
\item $[l]$ satisfies $\mathcal C_1$ and $\mathcal C_2$;
\item  $\l_{ki}-\l_{k-1,i}\equiv\theta_{k-1,i}(l)\in\{0,1\}, 1\leq i\leq m, m+1\leq k\leq m+n$;
\item  $\l_{ki}- \l_{kj}\neq 0$ for $m+1\leq k\leq m+n-1$, $1\leq i\leq m<j \leq k$;
\item If $[l']$ satisfies conditions (1)-(2) and
 ${l'}_{ki}={l}_{ki}+z_{ki}$, where  $z_{ki}\in \mathbb Z$, $z_{m+n,i}=0$ for all $i$,
 then $[l']$ satisfies condition (3).
\end{enumerate}
\end{itemize}
\end{definition}
\
Let 
$[l^0]=({l^0}_{ij})$ be a tableau satisfying $\mathcal C$ and ${\mathcal B}_{\mathcal{C}}([l^0])$ the set of all possible tableaux $[l]=(l_{ij})$
such that
\begin{enumerate}
\item[(1)] ${l}_{ki}={l^0}_{ki}+z_{ki}$, $z_{ki}\in \mathbb Z$, $1\leq k\leq m+n-1$, $1\leq i\leq k$;
\item[(2)] [l] satisfies $\mathcal C$.
\end{enumerate}

Let $V_{\mathcal C}([l^0])$ be the vector space spanned by ${\mathcal B}_{\mathcal{C}}([l^0])$.

 We are now in the position to state the main result of this section.

\begin{theorem}\label{admissible}
Let $\mathcal C=(\mathcal{C}_1,\mathcal{C}_2)$ be an admissible set of relations for $\gl(m|n)$,
$[l^0]$  a tableau satisfying $\mathcal C$.
Then the formulas \eqref{action h}-\eqref{action f} define a $\gl(m|n)$-module structure on $V_{\mathcal C}([l^0])$.
\end{theorem}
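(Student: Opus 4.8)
The plan is to verify that the explicit formulas \eqref{action h}–\eqref{action f} satisfy the defining relations of $\gl(m|n)$ on the space $V_{\mathcal C}([l^0])$. Since the formulas for the generators $e_k,f_k,h_k$ with $1\le k\le m-1$ and $m+1\le k\le m+n-1$ coincide (up to the normalization by $\sqrt{|a(\La)|}$) with the classical Gelfand--Tsetlin formulas for $\gl(m)$ and $\gl(n)$, the relations internal to the even subalgebras $\gl(m)$ and $\gl(n)$ are immediate from Theorem~\ref{thm-gln} applied to $\mathcal C_1$ and $\mathcal C_2$ separately: the admissibility of $\mathcal C=(\mathcal C_1,\mathcal C_2)$ guarantees precisely that the denominators occurring in the $\gl(m)$- and $\gl(n)$-rows never vanish on ${\mathcal B}_{\mathcal C}([l^0])$, and that the set of tableaux is stable under the corresponding $\pm\delta_{ki}$ moves. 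The essentially new relations to check are those involving $e_m,f_m$ together with the neighbouring generators: $[e_m,f_m]=h_m+h_{m+1}$, $[e_m,e_m]=[f_m,f_m]=0$, the mixed commutation $[e_i,f_m]=0$ and $[e_m,f_j]=0$ for $i,j\ne m$, and the Serre-type relations $[e_m[e_{m\pm1},[e_m,e_{m\mp1}]]]=0$ and their $f$-analogues.

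First I would reduce these checks to a finite-rank verification. Each relation $R=0$ is an identity between operators that change at most the entries in three consecutive rows $k-1,k,k+1$ around row $m$ (together with row $m+1$ in the $e_m$ formula). Acting on a fixed basis tableau $[l]$, both sides of $R$ produce a linear combination of tableaux $[l+\text{(bounded shift)}]$ whose coefficients are rational functions in the finitely many entries $l_{ki}$ with $m-1\le k\le m+1$. Hence the identity $R[l]=0$ is a purely rational-function identity in those entries, and it suffices to prove it as a formal identity — it then holds on $V_{\mathcal C}([l^0])$ provided no denominator vanishes there. The vanishing of denominators is exactly controlled by conditions (3) and (4) in the definition of ``$[l]$ satisfies $\mathcal C$'': condition (3) forbids $l_{ki}=l_{kj}$ with $i\le m<j\le k$ in the middle rows $m+1,\dots,m+n-1$ (these are the factors $l_{m+1,j}-l_{mi}-1$ and similar that appear in the $e_m,f_m$ formulas after the shift), and condition (4) ensures this persists under all integral shifts allowed in ${\mathcal B}_{\mathcal C}([l^0])$. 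The stability of ${\mathcal B}_{\mathcal C}([l^0])$ under the moves $\La\mapsto\La\pm\delta_{ki}$ appearing in each formula, with the convention $\xi_\La=0$ outside the set, is checked directly from conditions (1)--(4) — in particular condition (2) makes the $\theta$-parameters well defined and toggling.

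The formal rational identities themselves I would obtain by a deformation/specialization argument rather than brute force. The key observation is that for an \emph{essentially typical} highest weight the corresponding $V_{\mathcal C}([l^0])$ (with $\mathcal C$ the maximal set of relations) is the finite-dimensional module $L(\la)$ of the preceding theorem, where all relations hold by construction (they are inherited from \cite{Palev1989a} via the stated renormalization by $\sqrt{|a(\La)|}$, which is a diagonal change of basis and hence preserves all bracket relations). Since both sides of each relation $R[l]=0$ are rational functions of the entries $l_{m-1,*},l_{m,*},l_{m+1,*}$ (the other rows entering only as spectators), and since essentially typical tableaux are Zariski-dense in the relevant affine space of these entries, a rational identity valid on that dense set is a formal identity, hence valid for every admissible $\mathcal C$ and every $[l]\in{\mathcal B}_{\mathcal C}([l^0])$ where the denominators are nonzero. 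I expect the main obstacle to be the bookkeeping of the signs $(-1)^{\theta_{m1}+\dots+\theta_{m,i-1}}$ and the Serre relations at the node $m$: one must check that the extra odd Serre relation $[e_m[e_{m\pm1},[e_m,e_{m\mp1}]]]=0$ is genuinely satisfied, and here the density argument still applies but the number of terms is large; alternatively one can verify this single quartic relation by a direct computation in $\gl(1|2)$ and $\gl(2|1)$ and then invoke that it only involves rows $m-1,m,m+1$, so the general case follows from these rank-three cases by the same specialization principle.
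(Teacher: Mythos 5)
Your overall strategy matches the spirit of the paper's argument: reduce to the even part via Theorem~\ref{thm-gln} for the relations with $i\ne m$, view the remaining relations as rational-function identities in the tableau entries, and propagate from the essentially typical base case by a density/specialization argument. The paper does the same thing, packaged as Lemma~\ref{generic} (which moves $[l^0]$ along the one-parameter family $[l^0+x1_n]$ to reduce to the generic non-integral case) plus, for each generator $g\in K$ and each shift $z$, an explicit intermediate admissible set $\mathcal{C}'$ built from the essentially typical relations by the relation-removal method (Theorem~\ref{rr}); the coefficient identities over $V_{\mathcal{C}'}$ then specialize to $[l]$.

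There are, however, two gaps in your sketch that are not cosmetic. First, the claim that the coefficient of $[l+z]$ in $R[l]$ is a rational function of the entries $l_{ki}$ for $m-1\le k\le m+1$ is not well-posed as stated: the formulas involve the discrete quantities $\theta_{ki}\in\{0,1\}$, which are differences of entries across the odd rows and jump when those entries vary. Therefore the coefficient is piecewise rational, with pieces indexed by $\theta$-strata, and the "formal identity implies universal identity" step does not go through unless one explicitly freezes the $\theta$-data. The paper does this: the auxiliary tableau $[v]$ has variable entries only on $\mathfrak{V}_m\cup\mathfrak{V}_n$ and has $\theta_{ab}(v)=\theta_{ab}(l)$ on $\mathfrak{V}$. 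Without that, "Zariski density of essentially typical tableaux in the relevant affine space" is not even describing a single rational identity. Second, the proposed reduction of the odd Serre relation $[e_m,[e_{m\pm1},[e_m,e_{m\mp1}]]]=0$ to the rank-three cases $\gl(1|2)$ and $\gl(2|1)$ does not follow from your setup: for general $m$ the relevant rational identity has on the order of $5m$ free variables (rows $m-2$ through $m+2$), which are algebraically independent, while the rank-three cases involve at most a handful; nothing in your argument bounds the effective number of variables or exhibits a symmetry that would allow the general identity to follow from a low-rank instance. The paper instead handles this case directly by testing against the empty relation set $\mathcal{C}'=(\emptyset,\emptyset)$, splitting $g[v]$ into terms that remain in $\mathcal{B}_{\mathcal C}$ and spurious terms $\varphi'$, and arguing that $\varphi'$ is regular at the specialization locus (e.g.\ where $l_{mi}-l_{mj}\to 1$) so that its evaluation at $[l]$ vanishes. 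Both points can be repaired, but they require exactly the additional structure (fixed $\theta$-data; RR-method bootstrapping; the $\varphi/\varphi'$ split) that the paper supplies.
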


\

The module 
$V_{\mathcal C}([l^0])$ is called \emph{quasi typical} $\gl(m|n)$-module. Note that any essentially typical module is quasi typical. Also note that quasi typical module need not be 
typical. The module $V_{\mathcal C}([l^0])$ is infinite dimensional in general, it may or may not be highest weight module.

The formulas \eqref{action h}-\eqref{action f} define a 
$\hat{\mathfrak{g}}$-module structure on the space
 $V_{\mathcal C}(l^0)$.
Recall that we have a homomorphism 
$\epsilon: \hat{\mathfrak{g}}\rightarrow \gl(m|n)$ given by
$e_i\mapsto E_{i,i+1}, f_i\mapsto E_{i+1,i},1\leq i\leq m+n-1 $ and $h_j\mapsto E_{jj},1\leq j\leq m+n$.  Denote the kernel of the homomorphism $\epsilon$
by $K$.

\

\subsection{Relation removal method}
Let  $\mathcal C=(\mathcal{C}_1,\mathcal{C}_2)$. Suppose $(k,i)$ is a vertex involved in the set of relations $\mathcal C$, that is $(k,i)$ is the starting or the ending vertex of an arrow in the graph of $\mathcal C$.
We call a vertex $(k,i)$ in the graph of  $\mathcal C_1$(resp. $\mathcal C_2$) \emph{maximal} if there is no $(s,t)$ such that $((s,t);(k,i))\in \mathcal C_1 $(resp. $((k,i);(s,t))\in \mathcal C_2$). A  \emph{minimal} pair is defined similarly.
Suppose $\mathcal{C}$ is  admissible set  and  $(k,i)$ is maximal or minimal. Denote by $ \mathcal{C}_{ki} $ the set of relations obtained from $\mathcal{C}$ by removing all relations that involve $(k,i)$. Following   \cite{FRZ} we will say that $ \mathcal{C}_{ki}\subsetneq\mathcal{C}$ is obtained from $\mathcal{C}$ by the \emph{relation removal} method, or the RR-method for short.

\begin{theorem}\label{rr}
Let $ \mathcal{C}=(\mathcal{C}_1,\mathcal{C}_2)$ be an admissible set of relations for which Theorem \ref{admissible} holds. If $\mathcal{C}'$ is obtained from $ \mathcal{C}$ by the RR-method then $\mathcal{C}'$ is admissible and Theorem \ref{admissible} holds for $\mathcal{C}'$.
\end{theorem}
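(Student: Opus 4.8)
The plan is to reduce Theorem \ref{rr} to the analogous statement for $\gl(n)$ proved in \cite{FRZ}, plus a verification that the extra conditions (2)–(4) in the definition of "$[l]$ satisfies $\mathcal C$" are preserved under removing all relations through a maximal or minimal vertex. First I would fix notation: write $\mathcal C'=\mathcal C_{ki}$, and split into two cases according to whether $(k,i)$ lies in the graph of $\mathcal C_1$ (so $k\le m$) or of $\mathcal C_2$ (so $m+1\le k\le m+n$). In either case the relevant component $\mathcal C_1$ or $\mathcal C_2$ is an admissible set of relations for $\gl(m)$ or $\gl(n)$ in the sense of the purely even theory, so by the RR-method for $\gl(n)$ (i.e.\ the even instance of the statement underlying Theorem \ref{thm-gln}, as in \cite{FRZ}) the smaller set $\mathcal C'_1$ (resp.\ $\mathcal C'_2$) is again admissible and the even module $V_{\mathcal C'}$ carries the $\gl(m)$- (resp.\ $\gl(n)$-) action. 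This gives admissibility of $\mathcal C'=(\mathcal C'_1,\mathcal C'_2)$ for free.

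The substantive step is to check that Theorem \ref{admissible} still applies, i.e.\ that $V_{\mathcal C'}([l^0])$ is a legitimate domain for the formulas \eqref{action h}–\eqref{action f}. The only new content beyond the even case is the interaction between the two halves of the tableau encoded by conditions (2)–(4): the $\theta_{k-1,i}(l)\in\{0,1\}$ constraints linking rows $m$ and $m+1,\dots,m+n$, and the "no collision" condition $l_{ki}\ne l_{kj}$ for $1\le i\le m<j\le k$. I would argue that removing relations through a maximal/minimal vertex $(k,i)$ only enlarges the set of admissible $z$-shifts, and that this enlargement cannot destroy conditions (2) or (3): condition (2) is a parity/integrality condition between consecutive rows that is independent of which relations within a row are present, and is already guaranteed on all of ${\mathcal B}_{\mathcal C'}([l^0])$ because $[l^0]$ satisfies it and the shifts are integral; condition (4) is precisely the stipulation that (3) is inherited by all integral shifts satisfying (1)–(2), so it suffices to check (3) holds for $[l^0]$ with respect to $\mathcal C'$ and that (4) is stable. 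Here one uses that $[l^0]$ already satisfies $\mathcal C\supsetneq\mathcal C'$, hence a fortiori the weaker row-collision requirements of $\mathcal C'$, together with the fact that in the even RR-method the connected components of $G(\mathcal C')$ in each middle row are unions of components of $G(\mathcal C)$, so two entries that were forced to differ by an integer in $\mathcal C$ and obeyed $l_{ki}\neq l_{kj}$ continue to do so.

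Concretely, the key steps in order are: (1) reduce to the even RR-method on the relevant factor and invoke \cite{FRZ} for admissibility of $\mathcal C'$; (2) observe ${\mathcal B}_{\mathcal C}([l^0])\subseteq{\mathcal B}_{\mathcal C'}([l^0])$ and that the $\gl(m|n)$-action formulas on the larger space restrict consistently, so it is enough to know the formulas are well defined on $V_{\mathcal C'}([l^0])$; (3) verify conditions (2)–(4) of the definition of "satisfies $\mathcal C'$" for the enlarged basis, reducing (3)–(4) to the single check that $[l^0]$ satisfies (3) relative to $\mathcal C'$ and that this property propagates to all integral shifts — which follows from condition (4) for $\mathcal C$ and the component-union fact above; (4) conclude by applying Theorem \ref{admissible} verbatim to $(\mathcal C'_1,\mathcal C'_2)$ and $[l^0]$.

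The main obstacle I expect is step (3): ensuring that condition (4) — the self-referential demand that the row-$k$ non-collision condition (3) be stable under \emph{all} integral shifts — survives the removal of relations, since removing a relation in a middle row can, in principle, split a connected component of $G(\mathcal C_2)$ and thereby \emph{remove} an integrality constraint that was implicitly preventing a forbidden equality $l_{ki}=l_{kj}$ with $i\le m<j$. One must check that such a split never occurs precisely at a pair straddling the $m$/$m{+}1$ boundary, which is exactly where the hypothesis that $(k,i)$ is a \emph{maximal} or \emph{minimal} vertex (not an arbitrary one) and the structural restrictions (i)–(iv) defining $\mathfrak F$ are used; I would isolate this as a short lemma about how RR-removal acts on the connected components of each row's subgraph, and then the rest is bookkeeping.
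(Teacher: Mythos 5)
Your proposal does not match the paper's argument and contains a fundamental flaw: step (4), ``conclude by applying Theorem~\ref{admissible} verbatim to $(\mathcal C'_1,\mathcal C'_2)$ and $[l^0]$,'' is circular. Theorem~\ref{rr} is a lemma \emph{used in} the proof of Theorem~\ref{admissible}; the whole point of Theorem~\ref{rr} is to \emph{establish} that Theorem~\ref{admissible} holds for $\mathcal C'$ given that it holds for $\mathcal C$, so you cannot appeal to it for $\mathcal C'$. What is needed is a direct argument that the Serre relations (the kernel $K$ of $\epsilon:\hat{\mathfrak g}\to\gl(m|n)$) annihilate every tableau in ${\mathcal B}_{\mathcal C'}([l'])$.

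A second problem is your claim that ${\mathcal B}_{\mathcal C}([l^0])\subseteq{\mathcal B}_{\mathcal C'}([l^0])$ and hence the action ``restricts consistently.'' Because the definition of ``$[l]$ satisfies $\mathcal C$'' includes the biconditional (integer difference in a row \emph{if and only if} same connected component of $G(\mathcal C)$), removing relations can split a component, and a tableau satisfying $\mathcal C$ will typically \emph{fail} to satisfy $\mathcal C'$. The two basis sets need not be nested, and one cannot transfer the module structure by restriction or extension. This is exactly why the paper does not argue by inclusion.

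The paper's actual mechanism is an evaluation/density argument on rational functions. Fix a generator $g\in K$ and a tableau $[l']$ satisfying $\mathcal C'$. Construct a companion tableau $[l]$ satisfying $\mathcal C$ that agrees with $[l']$ away from the removed vertex $(i,j)$ (if $i=m$, one adjusts the whole vertical strip $(k,j)$, $m\le k\le m+n$, so as to preserve the $\theta$-pattern). Shifting the $(i,j)$-entry by a large integer $s$ of the appropriate sign (positive if $(i,j)$ is maximal, negative if minimal) keeps $[l+s\delta]$ inside ${\mathcal B}_{\mathcal C}([l])$. The coefficients $g_z$ in the expansion of $g[\,\cdot\,]$ are rational functions of the entries, the index set $A$ of surviving shifts is the same for $[l+s\delta]$ (relative to $\mathcal C$) and for $[l']$ (relative to $\mathcal C'$), and since $g[l+s\delta]=0$ for infinitely many $s$ by the hypothesis on $\mathcal C$, each $g_z$ vanishes identically and hence at $[l']$. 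This one-variable specialization argument is the heart of the proof, and it is entirely absent from your proposal. Your ``obstacle'' paragraph about connected-component splitting correctly identifies a delicate point, but you try to handle it combinatorially (conditions (2)--(4) preserved) rather than analytically, which does not yield well-definedness of the Serre relations on $V_{\mathcal C'}$.
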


\begin{proof} Suppose that $\mathcal{C}'$ is obtained from $\mathcal{C}$ by removing $(i,j)$. Then clearly $\mathcal{C}'$ is admissible. 
To show that Theorem \ref{admissible} holds for $\mathcal{C}'$ we need to prove that for any  generator $g\in K$ and for any  tableau $[l']$ satisfying $\mathcal C$ one has  $g[l']=0$.   

Consider a tableau $[l]$  satisfying $\mathcal{C}$ and the following conditions. If $i\neq m$ then   $l_{rt}=l'_{rt}$ if $(r, t)\neq (i, j)$. If $i= m$ then $l_{rt}=l'_{rt}$ for $(r, t)\neq (k, j), m\leq k\leq m+n$
and $\theta(l)=\theta(l')$ for $m\leq k\leq m+n-1$. Also define 
$\delta=\delta_{ij}$ if $i\neq m$ and 
 $\delta=\sum_{k=m}^{m+n}\delta_{kj}$ if $i= m$.

Let $s$ be positive (respectively, negative) integer if $(i,j)$ is maximal (respectively minimal) with $|s|>>0$.
Expand $g[l+ s\delta]$ and $g[l']$  step by step (using formulas \eqref{action h}-\eqref{action f}). We have
\begin{align*}
g[l+ s\delta]=\sum\limits_{z\in A}g_{z}(l+ s\delta)[l+ s\delta+z],
\end{align*}
where $A$ is the set of $z$ such that $[l+ s\delta+z]\in {\mathcal B}_{\mathcal{C}}([l])$.
 Suppose that the tableau $[l+ s\delta+z]$ appears in the expansion of 
$g[l+ s\delta]$.   Then  the tableau $[l'+z]$  appears in the expansion of 
$g[l']$. Moreover,   $[l+ s\delta+z]$  belongs to ${\mathcal B}_{\mathcal{C}}([l])$
if and only if  $[l'+z]$   belongs to
${\mathcal B}_{\mathcal{C}'}([l'])$. Hence we have
\begin{align*}
g[l']=\sum\limits_{z\in A}g_{z}(l')[l'+z]. 
\end{align*}

Since  for which Theorem \ref{admissible} holds for $\mathcal C$, one has that $g[l+ s\delta]=0$
and the values of rational functions $g_{z}(l+ s\delta)$ are $0$ for infinitely many values of $s$ for all $z\in A$. Thus
$g_{z}(l')=0$ for all $z\in A$ and  hence Theorem \ref{admissible} holds
for $\mathcal{C}'$.
\end{proof}

\

\subsection{Proof of Theorem \ref{admissible}}

 Now we prove  Theorem \ref{admissible}. The proof follows the idea of  the proof of \cite{FRZ}, Theorem 4.33 for $\gl(n)$. We provide the details for the sake of 
 completeness. 
 
Let $\mathcal C$ be an admissible set of relations and  $[l^0]$ is a tableau  satisfying $\mathcal{C}$.     Our goal is to show that $K$ vanishes on  $[l^0]$. 

We  say that a tableau $[l]$ satisfies the \emph{generic} condition if 
\begin{equation}\label{generic condition}
l_{ki}-l_{kj}\notin \mathbb Z  \text{ for }  1\leq i\leq m, m+1\leq j\leq k\leq m+n.
\end{equation}

\begin{lemma}\label{generic}
Let $g\in U(\hat{\mathfrak{g}})$, $[l^0]$ is a tableau  satisfying $\mathcal{C}$. If $g[l]=0$ for any tableau $[l]$
satisfying  $\mathcal{C}$ and the generic condition then $g[l^0]=0$.
\end{lemma}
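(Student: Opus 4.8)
The plan is to reduce the vanishing of $g\in U(\hat{\mathfrak g})$ on an arbitrary $\mathcal C$-tableau $[l^0]$ to its vanishing on generic $\mathcal C$-tableaux by a Zariski-density / polynomiality argument. The point is that all the structure constants $g_z(l)$ appearing in the expansion $g[l]=\sum_z g_z(l)[l+z]$ (obtained by iterating the formulas \eqref{action h}--\eqref{action f}) are rational functions in the entries $l_{ki}$, $1\le i\le k\le m+n$, with denominators that are products of factors of the form $l_{ki}-l_{kj}$ and $l_{k+1,j}-l_{ki}$; and the odd part of the formulas is polynomial in the $\theta$'s, which we fix. First I would fix a tableau $[l^0]$ satisfying $\mathcal C$, fix the discrete data $z$ (a target shift) and the $\theta$-values, and regard $g_z$ as a rational function in the continuous variables. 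What must be shown is that this rational function vanishes identically on the affine subspace cut out by the integrality/congruence relations imposed by $\mathcal C$ once one also imposes the generic condition \eqref{generic condition} — and that this already forces it to vanish at $[l^0]$.

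The key steps, in order, are: (1) Identify the affine subspace $L(\mathcal C)\subseteq\mathbb C^{N}$ of tuples $(l_{ki})$ whose top row equals that of $[l^0]$ and whose entries satisfy the integral differences prescribed by $\mathcal C$ (these are equations $l_{ki}-l_{rs}=c$ for fixed integers $c$, hence $L(\mathcal C)$ is an affine subspace, in particular irreducible). Note $[l^0]\in L(\mathcal C)$. (2) Observe that the generic condition \eqref{generic condition} is the complement of a finite union of affine hyperplanes $\{l_{ki}-l_{kj}\in\mathbb Z\}$ within $L(\mathcal C)$; since $L(\mathcal C)$ is irreducible and the parameters governing the ``cross'' differences $l_{ki}-l_{kj}$ with $i\le m<j$ are genuinely free on $L(\mathcal C)$ (no relation in $\mathcal C=(\mathcal C_1,\mathcal C_2)$ links the $\gl(m)$-part to the $\gl(n)$-part), the generic locus is a nonempty Zariski-dense subset of $L(\mathcal C)$ — in fact its complement is contained in a countable union of proper affine subspaces, so the generic $\mathcal C$-tableaux are dense in $L(\mathcal C)$ in the usual (metric) topology as well. (3) By hypothesis $g_z$ vanishes at every generic $[l]\in L(\mathcal C)$; since the structure constant $g_z$, restricted to $L(\mathcal C)$, is a rational function regular at $[l^0]$ (its poles $l_{ki}=l_{kj}$, $l_{k+1,j}=l_{ki}$ are avoided because $[l^0]$ satisfies $\mathcal C$, in particular the non-vanishing conditions built into ``$[l]$ satisfies $\mathcal C$''), and it vanishes on a dense subset, it vanishes at $[l^0]$. (4) Summing over the finitely many relevant $z$ (for fixed $g$ only finitely many shifts occur) and over the finitely many $\theta$-patterns gives $g[l^0]=0$.

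The main obstacle I anticipate is step (3): controlling the denominators. One must be sure that after iterating the formulas, no genuine pole of $g_z$ passes through $[l^0]$ — i.e. that every factor $l_{ki}-l_{kj}$ or $l_{k+1,j}-l_{ki}$ in a denominator is cancelled, or is nonzero at $[l^0]$, precisely because $[l^0]$ satisfies $\mathcal C$ (this is where conditions (2)--(4) in the definition of ``$[l]$ satisfies $\mathcal C$'' — the $\theta_{k-1,i}\in\{0,1\}$ congruences and the $\l_{ki}-\l_{kj}\neq 0$ conditions for $i\le m<j$ — are used). A secondary subtlety is that the ``generic'' tableaux have no integral cross-differences, so the $\gl(m|n)$ module behaves as a tensor-type/parabolically induced object over $\gl(m)\oplus\gl(n)$, and one should check the generic locus really is nonempty inside $L(\mathcal C)$ — this is where I would invoke that $\mathcal C_1,\mathcal C_2$ impose no constraint relating the two blocks, so the cross-differences can be perturbed freely off the countably many bad hyperplanes. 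Once regularity at $[l^0]$ and density of the generic locus are in hand, the conclusion is immediate from the identity theorem for rational functions on the irreducible variety $L(\mathcal C)$.
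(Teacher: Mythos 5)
Your general idea — deform inside the family of $\mathcal C$-tableaux to reach generic points and then invoke the identity theorem for rational functions — is exactly the philosophy of the paper's proof. However, there is a concrete gap in your Step (1)/(2): you fix the top row of $L(\mathcal C)$ to equal that of $[l^0]$, and this can render the generic locus \emph{empty}. Nothing in the lemma's hypothesis restricts the top row: the assumption gives $g[l]=0$ for \emph{all} generic $\mathcal C$-tableaux $[l]$, and you should use that extra freedom.

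Here is why fixing the top row is fatal. The generic condition asks that the cross-differences $l_{ki}-l_{kj}$ with $i\le m<j\le k\le m+n$ be non-integral, and this includes $k=m+n$, i.e.\ \emph{top-row} cross-differences. Meanwhile the $\theta$-congruences $l_{ki}-l_{k-1,i}\in\{0,1\}$ (which you rightly fix) propagate the top-row values of columns $1,\dots,m$ down to rows $m,\dots,m+n$, and the $\mathcal C_2$-relations may likewise pin the rest of the $\gl(n)$-block to the top row. Thus once the top row and the $\theta$-pattern are frozen, the cross-differences need not be ``genuinely free on $L(\mathcal C)$'' — in the extreme case $L(\mathcal C)$ is the single point $\{[l^0]\}$. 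Already for $\gl(1|1)$ the only cross-difference is $l_{2,1}-l_{2,2}$, a top-row difference, so a non-generic $[l^0]$ admits \emph{no} generic neighbour with the same top row, and your density claim in Step (2) is false. The paper's proof sidesteps this precisely by \emph{not} fixing the top row: it deforms along $[l^0]\mapsto[l^0+x\,1_n]$, where $1_n$ adds $1$ to every entry $l_{kj}$ with $m+1\le j\le k\le m+n$, \emph{including the $\gl(n)$-part of the top row}. This shift preserves all $\mathcal C_2$-differences and all $\theta$-values, leaves the $\gl(m)$-block and $\mathcal C_1$ untouched, and makes every cross-difference equal to $l^0_{ki}-l^0_{kj}-x$; for all but countably many $x\in\mathbb C$ the deformed tableau satisfies $\mathcal C$ and the generic condition. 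Then $g_z(l^0+x\,1_n)$ is a rational function of the single variable $x$, regular at $x=0$ (the denominators in \eqref{action h}--\eqref{action f} are nonzero on any tableau satisfying $\mathcal C$), vanishing at infinitely many $x$, hence identically zero, and evaluating at $x=0$ gives $g_z(l^0)=0$. Your Steps (3)--(4) are otherwise sound; the repair is simply to replace the top-row-fixed affine subspace by a family that includes this uniform shift of the $\gl(n)$-block (or, more economically, to use the one-parameter family directly as the paper does).
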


\begin{proof}
We have  $$g[l]=\sum_{z\in A}g_z(l)[l+z],$$ for some finite set of integers $A$. Here $g_z$ is some rational function and $g_z(l)$ is its evaluation in $l$. Let $1_n$ be the tableau such that the $(k,j)$th entry is $1$
for $m+1\leq j\leq k\leq m+n$ and all other entries are $0$.
 There exist infinitely many $x\in\mathbb C$ such that $[l^0+x1_n]$ satisfies the generic condition \eqref{generic condition}, 
 and for each such $x$ one has 
 $$g[l^0+x1_n]=\sum_{z\in A}g_z(l^0+x1_n)[l+z]=0$$ by the assumption. Hence $g_z(l^0+x1_n)=0$ for infinitely many $x$. 
We conclude that $g_z(l^0)=0$ and the statement follows.
\end{proof}

By Lemma \ref{generic} it is sufficient  
 to prove the statement  of the theorem for 
$[l]$ with the
generic condition \eqref{generic condition}. Let $g$  be a generator of $K$.  Then
$$g[l]=\sum_{z}\varphi_z(l)[l+z].$$ 
Our goal is to verify that  $g[l]=0$.  To do so we consider every pair of indices $(k,i)$ involved in  $\mathcal C$ with $z_{ki}\neq 0$ and construct an admissible  
set of relations $\mathcal C'$ for which Theorem \ref{admissible} holds.  
Let $[v]$ be a tableau with some variable entries that satisfies $\mathcal C'$ (hence $g[v]=0$). Then 
$\varphi_z(l)$ is equal to the evaluation of the coefficient $\varphi_z(v)$ of $[v+z]$ in 
$g[v]$ at $[l]$ which is $0$.
This proves that $\varphi_z(l)=0$.
Note that for same $g$ and different $z$ one  chooses different sets of relations $\mathcal C'$
and  tableaux $[v]$.  In the following we provide the sets of relations $\mathcal C'$ and the  tableaux $[v]$ for each generator $g\in K$.

Define
\begin{align}
\mathfrak V_m=\{(i,j)|1\leq j\leq i\leq m\},\\
\mathfrak V_n=\{(i,j)|m+1\leq j\leq i\leq m+n\},\\
\mathfrak V=\{(i,j)|m+1\leq i\leq m+n-1,1\leq j\leq m\}.
\end{align}

Let $v$ be the variable tableau such that $v_{ab}$ are variables for $(a,b)\in \mathfrak V_m\cup\mathfrak V_n$ and $\theta_{ab}(v)=\theta_{ab}(l)$ for $(a,b)\in \mathfrak V$.

\begin{itemize}
\item [(i)] $[h_{i},h_{j}][l]=0$.
Let $g=[h_{i},h_{j}]$. Then 
$$g[l]=\varphi(l)[l]=0.$$
 Then  $\varphi(l)$ is the evaluation of the coefficient of 
$[v]$ in $g[v]$ at $[l]$ which is $0$. Thus $[h_{i},h_{j}][l]=0$.
\\
\item [(ii)] $[h_{i},e_{j}][l]=(\delta_{ij}-\delta_{i,j+1})e_{j}[l].$
Denote $g=[h_{i},e_{j}]-(\delta_{ij}-\delta_{i,j+1})$. 
Then $$g[l]=\sum_{s=1}^{j}\varphi_{s}(l)[l+\delta_{js}].$$
For nonzero vector $[l+\delta_{js}]$,  the coefficient $\varphi_{s}(l)$ is equal to the evaluation of the coefficient of $[v+\delta_{js}]$ in $g[v]$ at $[l]$ which is $0$.
Therefore,  $[h_{i},e_{j}][l]=(\delta_{ij}-\delta_{i,j+1})e_{j}[l]$.
\\
\item [(iii)] $[h_{i},f_{j}][l]=-(\delta_{ij}-\delta_{i,j+1})f_{j}[l].$
The proof is similar to (ii).
\\
\item [(iv)]$[e_{i},f_{j}][l]=0  \text{ if }  i\neq j$.
Denote $g=[e_{i},f_{j}]$. Then 
$$g[l]=\sum_{s=1}^{i}\sum_{t=1}^{j}\varphi_{s,t}(l)[l+\delta_{is}-\delta_{jt}].$$
\begin{itemize}
\item[(a)] Suppose there is no relation between $(i,s)$ and $(j,t)$.
Let $[v]$ be the tableau with variable entries. 
\item[(b)] Suppose there is a relation between $(i,s)$ and $(j,t)$. Then it is $((i,s);(j,t))$
or  $((j,t);(i,s))$ and $j=i\pm 1$. We set $\mathcal C'=\{((i,s);(j,t))\}$
or  $\mathcal C'=\{((j,t);(i,s))\}$ respectively. Then $C'$ is admissible. Moreover, 
applying Theorem \ref{rr} to the "standard" set of relations (satisfied by all essentially typical tableaux), we obtain that $C'$ satisfies Theorem \ref{admissible}. 
Let $v_{is}=l_{is},v_{jt}=l_{jt}$ and all other entries of $[v]$ are variables. 
\end{itemize}

In both cases above, the coefficient $\varphi_{s,t}(l)$ by the tableau $[l+\delta_{is}-\delta_{jt}]$ is the evaluation of the coefficient 
$\varphi_{s,t}(v)$ of $[v+\delta_{is}-\delta_{jt}]$ in $g[v]$ at $[l]$ which is $0$. Hence $\varphi_{s,t}(l)$ and 
 $[e_{i},f_{j}][l]=0$.

\item [(v)]$[e_{i},f_{i}][l]=(h_{i}-h_{i+1})[l] \text{ for } i\neq m$.
If $i<m$, the desired equality follows from \cite{FRZ}, Theorem 4.33. Suppose $i>m$. 
Denote $g=[e_{i},f_{i}]-h_{i}+h_{i+1}$. Then 
$$g[l]=\sum_{s\neq t}\varphi_{s,t}(l)[l+\delta_{is}-\delta_{it}]
+\varphi(l)[l].$$
Since there is no relation between $(i,s)$ and $(i,t)$, it can be verified that
$\varphi_{s,t}(l)=0$ using the same argument as in (a) of (iv).
Define operators $\hat{e}_i$ and $\hat{f}_i$ by
\begin{equation}
\hat e_{i}[l]=
-\sum_{s=m+1}^{i}  \Pi_{t=1}^{m}\left(\frac{l_{it}-l_{is}+1}{l_{i-1,t}-l_{is}+1} \right)
  \frac{\Pi_{t=m+1}^{i+1}(l_{i+1,t}-l_{is})}
  {\Pi_{t\neq s=m+1}^{k} (l_{it}-l_{is})}[l+\delta_{is}] ,
\end{equation}
\begin{equation}
\hat f_{i}[l]=
\sum_{s=m+1}^{i}
\prod_{t=1}^{m}\left(\frac{l_{it}-l_{is}-1}{l_{i+1,t}-l_{is}-1}\right)
\frac{\prod_{t=m+1}^{i-1}(l_{i-1,t}-l_{is})}{\prod_{t\neq s =m+1}^{i}(l_{it}-l_{is})}
[l-\delta_{is}].
\end{equation}
Since the $\theta$ is $0$ or $1$, $\varphi(l)$ is equal to the coefficient of $[l]$ in $(\hat e_i \hat f_i-\hat f_i \hat e_i-h_{i}+h_{i+1})[l]$.
Since the set $\mathcal C'=(\emptyset, \emptyset)$ is admissible by Theorem \ref{rr}, then the coefficient of
$[v]$ in $g[v]$ is $0$. Thus the coefficient of $[v]$ in $(\hat e_i \hat f_i-\hat f_i \hat e_i-h_{i}+h_{i+1})[v]$ is $0$. Using the argument of 
\cite{FRZ}, Lemma 4.31,  we have that the coefficient of $[l]$ in $(\hat e_i \hat f_i-\hat f_i \hat e_i-h_{i}+h_{i+1})[l]$ is the limit of the coefficient
 of $[v]$ in $(\hat e_i \hat f_i-\hat f_i \hat e_i-h_{i}+h_{i+1})[v]$ by taking $v$ to $l$.
Therefore, $\varphi(l)=0$.
\item [(vi)]$[e_{m},f_{m}][l]=(h_{m}+h_{m+1})[l]$.
The proof is similar to the case (v).

\item[(vii)] 
$[e_{i},e_{j}][l]=[f_{i},f_{j}][l]=0 \text{ if } |i-j|> 1$.
The equality can be obtained by the same argument as in (a) of (iv).

\item[(viii)] $[e_m,e_m][l]=[f_m,f_m][l]=0.$ It can be proved using the same argument as in (vi).

Denote $g=[e_m,e_m]$. Then 
$$g[l]=\sum_{s\leq t}\varphi_{s,t}(l)[l+\delta_{ms}+\delta_{mt}].$$

If $l_{ms}-l_{mt}>1$ or $s=t$, then $\varphi_{s,t}(l)$
is equal to the evaluation of the coefficient of $[v+\delta_{is}+\delta_{it}]$ in $g[v]$ at $[l]$ which is $0$.
If $l_{ms}-l_{mt}=1$ then  without loss of generality we can assume that there exists $p$ such that
$((m,s);(m-1,p)),((m-1,p);(m,t))\in \mathcal C$. Thus $l_{m-1,p}=l_{m,s}$ and
$[l+\delta_{ms}+\delta_{mt}]$ does not satisfy $\mathcal C$. This proves $[e_m,e_m][l]=0$.
Using the same arguments one has that $[f_m,f_m][l]=0$.

\item [(ix)] 
$[e_i,[e_i,e_{i\pm 1}]][l]=[f_i,[f_i,f_{i\pm 1}]][l]=0, \text{ for } i\neq m$. The equalities follow from \cite{FRZ} for $i<m$. We show the equalities for $i>m$.
Denote $g=[e_i,[e_i,e_{i\pm 1}]]$. Then

$$g[l]=\sum_{r\leq s}\sum_{t}\varphi_{r,s,t}(l)[l+\delta_{ir}+\delta_{is}+\delta_{i\pm 1,t}].$$

\begin{itemize}
\item[(a)] Suppose that $r,s,t\geq m+1$. 
\begin{itemize}
\item[(1)]
If there is no relation between $(i,r)$, $(i,s)$ and $(i\pm 1,t)$ then take 
 $\mathcal C'=(\emptyset,\emptyset)$ and the variable tableau $[v]$.
\item[(2)] Suppose one of $\{(i,r), (i,s)\}$ is connected to $(i\pm 1,t)$.
Let $\mathcal C'$ be the set consisting of this relation. 
Then $\mathcal C'$ is admissible and satisfies Theorem \ref{admissible}.
Without loss of generality we may assume that $\mathcal C'=\{((i,r);(i\pm 1, t))\}$.
Choose $[v]$ with 
 $v_{ir}=l_{ir}$ and
$v_{i\pm 1,t}=l_{i\pm 1,t}$.  Applying the argument as above in both cases (1) and (2) we obtain $g[l]=0$.

\item[(3)] Suppose there are relations between $(i+ 1, t)$ and both $(i,r)$ and $(i,s)$. Without loss of generality we may assume that $r<s$.
Then $\{((i,r);(i+1,t)),((i+1,t);(i,s))\}\subseteq \mathcal{C}$ and there exists $p$
 such that  $\{((i,r);(i- 1,p)),((i- 1,p);(i,s))\}\subseteq \mathcal{C}$.
Set $\mathcal{C}'=$ $\{((i,r);(i+ 1, t)),((i+ 1 ,t);(i,s)),((i,r);(i- 1,p)),((i-1,p);(i,s))\}.$ Then $\mathcal C'$ is admissible with the
 associated graph  as follows.
\begin{center}
\begin{tabular}{c c c c}
\xymatrixrowsep{0.5cm}
\xymatrixcolsep{0.1cm}
\xymatrix @C=0.2em{
  &   &\scriptstyle{(i+1,t)}\ar[rd]   &   & \\
 &\scriptstyle{(i,r)}\ar[rd] \ar[ru]  &    &\scriptstyle{(i,s)}  &  \\
   &   &\scriptstyle{(i-1,p)}\ar[ru]   &   & }
\end{tabular}
\end{center}
Hence $\mathcal C'$ satisfies Theorem \ref{admissible}.
We choose $[v]$ such that
 $v_{ir}=l_{ir}$, $v_{is}=l_{is}$,
$v_{i-1,p}=l_{i-1,p}$, $v_{i+1,t}=l_{i+1,t}$.
 
The coefficient $\varphi_{r,s,t}(l)$ of the nonzero vector $[l+\delta_{ir}+\delta_{is}+\delta_{i+ 1,t}]$ is the evaluation at $[l]$ of the coefficient of $[v+\delta_{ir}+\delta_{is}+\delta_{i+1,t}]$ in $g[v]$,  which is $0$. Hence $\varphi_{r,s,t}(l)=0$. 

\item[(4)]
Suppose there are relations between $(i-1, t)$ and both $(i,r)$ and $(i,s)$. Without loss of generality we may assume that $r<s$.
Then, either there exists $p$
such that  $\{((i,r);(i- 1,p)),((i- 1,p);(i,s))\}\subseteq \mathcal{C}$,
or there exist $p_k < q_k$, $i\leq k \leq m+n$ such that $p_{i}=r,q_{i}=s$ and
$((i_k,p_k);(i_{k+1},p_{k+1})), ((i_k,q_k);(i_{k+1},q_{k+1}))$ $\in \mathcal C$ for $i+1\leq k\leq m+n-1$.
In the first case we choose
 $\mathcal{C}'=\{((i,r);(i- 1, t)),((i- 1 ,t);(i,s)),((i,r);(i+ 1,p)),((i+ 1,p);(i,s))\}$  and tableau $[v]$ such that
 $v_{ir}=l_{ir}$, $v_{is}=l_{is}$,
$v_{i-1,p}=l_{i-1,p}$, $v_{i+1,t}=l_{i+1,t}$. 
In the second case
we choose $\mathcal{C}'=\{((i,r);(i- 1 ,t)),((i-1, t);(i,s)\}\cup \{((m+n,p_{m+n});(m+n,q_{m+n}))\} \cup \{((i_k,p_k);(i_{k+1},p_{k+1}))|i\leq k\leq m+n-1\}\cup\{((i_k,q_{k+1});(i_{k+1},q_{k}))|i\leq k\leq m+n-1,\} $ and
tableau $[v]$ such that $v_{i-1,t}=l_{i-1,t}$,
$v_{k,p_k}=l_{i_k,p_k}$, $v_{k,q_k}=l_{k,q_k}$, $i\leq k\leq m+n$.
The associated graphs are  respectively as follows.

\begin{center}
\begin{tabular}{c c c c}
\xymatrixrowsep{0.5cm}
\xymatrixcolsep{0.1cm}
\xymatrix @C=0.2em{
  &   &\scriptstyle{(i+1,p)}\ar[rd]   &   & \\
 &\scriptstyle{(i,r)}\ar[rd] \ar[ru]  &    &\scriptstyle{(i,s)}  &  \\
   &   &\scriptstyle{(i-1,t)}\ar[ru]   &   & }
\end{tabular}
\begin{tabular}{c c c}
\xymatrixrowsep{0.5cm}
\xymatrixcolsep{0.1cm}\xymatrix @C=0.2em 
{
 \scriptstyle{(m+n,p_{m+n})} \ar[rr] & &\scriptstyle{(m+n,q_{m+n})} \ar[d]    & \\
  \ar[u] && \ar@{.}[d]   \\
      \ar@{.}[u]  &    &    \ar[d] \\
  \scriptstyle{(i+1,p_{i+1})}\ar[u]  &  &  \scriptstyle{(i+1,q_{i+1})}\ar[d] \\
\scriptstyle{(i,r)} \ar[u] \ar[dr]   &   & \scriptstyle{(i,s)} \\
& \scriptstyle{(i-1,t)}\ar[ur] &&
}
\end{tabular}
\end{center}

In each of these cases the set $\mathcal C'$ is admissible and it satisfies Theorem \ref{admissible}. 
The coefficient $\varphi_{r,s,t}(l)$ of the nonzero vector $[l+\delta_{ir}+\delta_{is}+\delta_{i- 1,t}]$ is the evaluation at $[l]$ of the coefficient of $[v+\delta_{ir}+\delta_{is}+\delta_{i- 1,t}]$ in $g[v]$  which equals $0$. Hence $\varphi_{r,s,t}(l)=0$. 

\end{itemize}

\item[(b)] Suppose that only one number in $\{r,s,t\}$ is $\leq m$.
Then there exists at most one relation between $(i,r), (i,s)$ and $(i\pm 1,t)$.
By the above argument, $\varphi_{r,s,t}(l)=0$.
\item[(c)] Suppose that two numbers in $\{r,s,t\}$ are $\geq m$. Similarly to (viii),
$\varphi_{r,s,t}(\Lambda)=0$.
\end{itemize}
The equation $[f_i,[f_i,f_{i\pm 1}]][l]=0$ can be shown by a similar argument.

\item [(x)] $[e_m[e_{m\pm1},[e_m,e_{m\mp1}]]][l]=[f_m[f_{m\pm 1},[f_m,f_{m\mp1}]]][l]=0.$
Denote \\ $g=[e_m[e_{m\pm1},[e_m,e_{m\mp1}]]]$. 
Let $\mathcal C'=(\emptyset, \emptyset)$ be the empty set of relations and $v=(v_{ij})$ the  tableau with variable entries.
Applying the Gelfand-Tsetlin formulas   to a nonzero $x\in \g$ one can write $x[v]$  as follows:
$$x[v]=\sum_{z}a(v)[v+z]+\sum_{z'}a'(v)[v+z'],$$
where $[l+z]$ satisfies $\mathcal C$ and 
$[l+z']$ does not satisfy $\mathcal C$.
Hence $g[v]$ has the form:
\begin{equation*}
g[v]=\sum
(\varphi_{s,t,p,q}(v)+\varphi'_{s,t,p,q}(v))
[v+\delta_{ms}+\delta_{mt}+\delta_{m+ 1,p}+\delta_{m-1,q}].
\end{equation*}
Since $\mathcal C'$ is admissible, $\varphi_{s,t,p,q}(v)+\varphi'_{s,t,p,q}(v)=0$.
Evaluating at $[l]$ we have
$$g[l]=\sum_{s\leq t}\sum_{p,q}
\varphi_{s,t,p,q}(l)
[l+\delta_{ms}+\delta_{mt}+\delta_{m+ 1,p}+\delta_{m-1,q}].$$

If $[l+\delta_{ms}+\delta_{mt}+\delta_{m+ 1,p}+\delta_{m-1,q}]$ does not satify $\mathcal C$, then $\varphi_{s,t,p,q}(l)$ is zero by definition.
So we can assume that  $[l+\delta_{ms}+\delta_{mt}+\delta_{m+ 1,p}+\delta_{m-1,q}]$  satisfies $\mathcal C$.
If $l_{mi}-l_{mj}\neq \mathbb Z$ or $l_{mi}-l_{mj}\neq \mathbb Z$, then
all the tableaux appearing in the expansion of $g[l]$ satisfy $\mathcal C$. So we do not have $\varphi'_{s,t,p,q}(v)$ in this case and $\varphi_{s,t,p,q}(l)=0$.
Suppose $l_{mi}-l_{mj}=1$. The rational function $\varphi'_{s,t,p,q}$ is regular at $v_{mi}=v_{mj}-1$. Moreover, its evaluation at $[l]$ is zero.
Thus  $\varphi_{s,t,p,q}(l)=0$.

Using the same argument, we have $[f_m[f_{m\pm 1},[f_m,f_{m\mp1}]]][l]=0$.

\end{itemize}

This completes the proof of Theorem \ref{admissible}.

\

\begin{example}[Highest weight modules]
Let $\lambda=(\lambda_1,\ldots ,\lambda_{m+n})$ such that $\lambda_{i}-\lambda_j\notin \mathbb{Z}$ or $\lambda_{i}-\lambda_j> i-j$ for $1\leq i<j\leq m$, $m+1\leq i<j\leq m+n$ and
$\la_s+\la_t\notin \mathbb Z$ for $1\leq s\leq m<t\leq m+n$.
Let $[l^0]$ be the tableau such that
 $$l_{ki}^0=\la_{ki}-i+1, (1\leq i\leq m); \quad l_{kj}^0=-\la_{kj}+j-2m, (m+1\leq j\leq k),$$
 where $\lambda_{ki}=\lambda_i$, $i=1, \ldots, m+n$. Let 
 $\mathcal{C}_1$ and $\mathcal{C}_2$ be maximal sets of relations satisfied by $[l^0]$.
Then the irreducible highest weight module $L(\lambda)$ is isomorphic to $V_{\mathcal{C} }([l^0])$.

\end{example}

\

\section{Irreducibility of quasi typical modules}
In this section we obtain necessary and sufficient conditions for the irreducibility of  quasi typical $\g$-modules.
We have

\begin{proposition}\label{prop action of gt subalg}
The action of $B(t)$ on any basis tableau $[l]$ of  $V_{\mathcal C}([l^0])$ is given by
\begin{equation}\label{action of gt subalg}
B_k(t)[l]=
\left\{ 
\begin{aligned}
&(1+tl_{k1})\cdots(1+tl_{kk})[l],      &\text{if } 1\leq k\leq m,\\
&\frac{(1+tl_{k1})\cdots(1+tl_{km})}
{(1+tl_{k,m+1})\cdots(1+tl_{k,k})} [l],    &\text{if } m+1\leq k\leq m+n.\\
\end{aligned}
 \right.
\end{equation}
\end{proposition}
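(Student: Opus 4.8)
The plan is to compute the action of the quantum Berezinian $B(t)$ — equivalently its coefficients $B_k(t)$, which generate the centers $Z_{k,l}$ of the Gelfand-Tsetlin chain — directly on a basis tableau $[l]$ using the Gelfand-Tsetlin formulas \eqref{action h}-\eqref{action f}. Since the $B_k(t)$ are central in $U(\g_{k,l})$ and $V_{\mathcal C}([l^0])$ decomposes (as we will see once we know this action is diagonal) into generalized eigenspaces for $\Gamma$, it suffices to identify the eigenvalue. First I would reduce to the generic case: by Lemma \ref{generic}, it is enough to verify the identity for tableaux $[l]$ satisfying the generic condition \eqref{generic condition}, since the coefficients appearing on both sides are rational functions of the entries of $[l]$ and the eigenvalue formula in \eqref{action of gt subalg} is manifestly rational (indeed polynomial up to the obvious denominators) in the $l_{ki}$. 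For a generic tableau, the relation submodule $V_{\mathcal C}([l^0])$ locally looks like an essentially typical module, so the $\g_{k,l}$-submodule generated by $[l]$ agrees, in a neighborhood of $[l]$, with the module from the Palev-type construction in Section 6.

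Next I would invoke the known action of $B(t)$ on highest weight modules: formula \eqref{hc} gives the scalar by which $B(t)$ acts on $L(\lambda)$ in terms of $x_i = \lambda_i - i + 1$ for $i \le m$ and $x_j = -\lambda_j + j - 2m$ for $j > m$, i.e. exactly the quantities $l_i$, $l_j$ defined in Section 6. Restricting $B(t)$ to the subalgebra $\g_{k,l}$ in the chain (with $k$ the number of "even" rows and $l$ the number of "odd" rows up to level $k$, so that $k+l$ is the current row index, which I am writing as $k$ in the statement), its action on the $\g_{k,l}$-highest weight vector that generates the relevant piece is given by the same formula with the row-$k$ entries $l_{k,1},\dots,l_{k,k}$ of the tableau in place of the global $l_i$. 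This yields the $k \le m$ case: $B_k(t)$ acts by $(1+tl_{k1})\cdots(1+tl_{kk})$; and the $m+1 \le k \le m+n$ case: $B_k(t)$ acts by the ratio $(1+tl_{k1})\cdots(1+tl_{km}) / \big((1+tl_{k,m+1})\cdots(1+tl_{k,k})\big)$. The point is that in the Gelfand-Tsetlin formulas the operators $e_i, f_i$ with $i < k$ shift only the entries in rows $< k$, so they commute with any element of $Z(\g_{k,l})$, and $Z(\g_{k,l})$ acts on all of the $\g_{k,l}$-isotypic piece by the scalar read off from the "top row" $l_{k,\bullet}$; thus the eigenvalue is diagonal on the whole basis, independent of the rows above $k$.

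Concretely the key steps, in order, are: (1) reduce to generic tableaux via Lemma \ref{generic}; (2) observe that $B_k(t) \in Z(\g_{k,l})$ commutes with $e_j, f_j$ for $j < k$ acting by the Gelfand-Tsetlin formulas, hence is a scalar on each $\g_{k,l}$-subrepresentation and that scalar depends only on $l_{k,1},\dots,l_{k,k}$; (3) compute that scalar by comparing with the highest weight case, using \eqref{hc} applied to $\g_{k,l}$ together with the substitution $\lambda \mapsto$ (row-$k$ weight) $\Leftrightarrow$ $l_{k,i}$; (4) conclude the displayed formula and note it is rational in the entries, so the generic computation extends to all $[l]$ satisfying $\mathcal C$. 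The main obstacle I anticipate is step (2)-(3): one must be careful that the "top row" $l_{k,\bullet}$ of the tableau really records the $\g_{k,l}$-highest weight of the local subrepresentation — this requires checking, in the generic situation, that the action of $\g_{k,l}$ on $V_{\mathcal C}([l^0])$ restricted near $[l]$ is a direct sum of essentially typical $\g_{k,l}$-modules with highest weights determined by row $k$, which is exactly the content of the quasi typical construction specialized to $\g_{k,l}$ and follows from Theorem \ref{admissible} applied at level $k$. Once that structural fact is in hand, the eigenvalue computation is just formula \eqref{hc} read off at each level.
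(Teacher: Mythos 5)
Your proposal takes a genuinely different route from the paper. The paper's proof is a short transfer argument: it invokes the RR-method, asserting (via the argument of FRZ Lemma 5.1) that formula \eqref{action of gt subalg} holds for $V_{\mathcal C}([l^0])$ if and only if it holds for $V_{\tilde{\mathcal C}}([\tilde{l}^0])$ whenever $\tilde{\mathcal C}$ is obtained from $\mathcal C$ by removing relations, and then uses that $\emptyset$ is reachable from every admissible set by a chain of RR-steps. You instead reduce to generic tableaux via the rationality mechanism of Lemma \ref{generic} and then try to read off the eigenvalue from centrality of $B_k(t)$ in $U(\g^{(k)})$ together with the scalar formula \eqref{hc}. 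Both approaches are in the spirit of ``reduce to a case where the answer is manifest, then continue rationally,'' but the reduction target and the tool used to identify the scalar are different.

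There is, however, a genuine gap in your steps (2)--(3), which you partly flag yourself. Centrality of $B_k(t)$ in $U(\g^{(k)})$ tells you it commutes with $e_j,f_j,h_j$ for $j<k$, and the GT formulas show these operators preserve the span of tableaux with fixed rows $\geq k$; but that span is a relation $\g^{(k)}$-module which is \emph{not} a highest weight module and, for a generic tableau, is typically infinite dimensional. Centrality alone therefore does not imply that $B_k(t)$ acts diagonally on the tableau basis, nor does it let you apply \eqref{hc}, which is a statement about the action of the Berezinian on a highest weight module $L(\lambda)$. The claim that ``the action of $\g_{k,l}$ on $V_{\mathcal C}([l^0])$ restricted near $[l]$ is a direct sum of essentially typical $\g_{k,l}$-modules with highest weights determined by row $k$'' is not a consequence of Theorem \ref{admissible} and is false as stated. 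To close the gap one would need either an induction on $k$ showing the lower Berezinians already separate tableaux (so that $B_k(t)$, commuting with them, is forced to be diagonal), or a separate rational-continuation argument identifying the $\g^{(k)}$-central character of the relation submodule with the one given by \eqref{hc} when evaluated at the row-$k$ entries. The paper avoids this by deferring exactly this kind of bookkeeping to the RR-transfer lemma, where the base case $\mathcal{C}=\emptyset$ (equivalently, the essentially typical computation of Palev) is the only place a direct verification is needed.
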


\begin{proof}
Let $ \mathcal{C}$ be an admissible set of relations. If
$\tilde{\mathcal{C}}$ is obtained from  $ \mathcal{C}$ by the RR-method
 then  $B_k (t)$ acts on   $V_{\mathcal{C}}([l^0])$   by
  \eqref{action of gt subalg}  for any $[l^0]$ if and only if
it acts by  \eqref{action of gt subalg}  on   $V_{\tilde{\mathcal{C}}}([\tilde{l^0}])$ for any $[\tilde{l^0}]$. 
This follows by the argument from   \cite{FRZ},  Lemma 5.1.
Since the set $\emptyset$ can be obtained from any admissible set by the RR-method, the statement follows.
\end{proof}

\begin{corollary}\label{corollary separation}
Let $\sum_{i=1}^{r}a_{i}[l_i]\in V_{\mathcal{C}}([l])$ with all $a_{i}$ non-zero, and $[l_i]$ are mutually distinct.
Then   $[l_i]\in V_{\mathcal{C}}([l])$ for all $1\leq i\leq r$.
\end{corollary}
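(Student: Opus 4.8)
The plan is to exploit the fact, established in Proposition~\ref{prop action of gt subalg}, that the Gelfand-Tsetlin subalgebra $\Gamma$ (generated by the coefficients of $B_1(t),\dots,B_{m+n}(t)$) acts \emph{diagonally} on the spanning tableaux $[l]$ of $V_{\mathcal C}([l^0])$, with the scalar on $[l]$ determined by the symmetric functions of the rows $(l_{k1},\dots,l_{kk})$. Thus each basis tableau $[l_i]$ is a $\Gamma$-eigenvector with a character $\chi_i\in\hat\Gamma$ that can be read off from formula~\eqref{action of gt subalg}. The key point to verify is that distinct basis tableaux in ${\mathcal B}_{\mathcal C}([l])$ have distinct $\Gamma$-characters, i.e.\ the assignment $[l_i]\mapsto\chi_i$ is injective on ${\mathcal B}_{\mathcal C}([l])$. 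Once that is known, the corollary is immediate: if $v=\sum_{i=1}^r a_i[l_i]$ lies in $V_{\mathcal C}([l])$ with the $\chi_i$ pairwise distinct, then applying suitable elements of $\Gamma$ (or a Lagrange-interpolation/Vandermonde argument on the $\Gamma$-eigenvalues) isolates each summand, so each $[l_i]$ is itself a vector of $V_{\mathcal C}([l])$.

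\textbf{Separating the characters.} For a fixed row index $k$, the element $B_k(t)$ records, via~\eqref{action of gt subalg}, the multiset $\{l_{k1},\dots,l_{kk}\}$ when $k\le m$, and the pair of multisets $\{l_{k1},\dots,l_{km}\}$, $\{l_{k,m+1},\dots,l_{kk}\}$ when $k>m$ (the numerator and denominator of the rational function cannot share a common factor precisely because of the genericity/noncriticality built into the definition of ``$[l]$ satisfies $\mathcal C$'', in particular condition~(3) forcing $l_{ki}\neq l_{kj}$ for $1\le i\le m<j\le k$). So the character $\chi_i$ determines, for every $k$, the \emph{unordered} row data of $[l_i]$. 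But for tableaux in ${\mathcal B}_{\mathcal C}([l])$ the entries within a row are strictly ordered in the relevant cases (the connected components of $G(\mathcal C)$ that force integrality also force the $\succeq_{\mathcal C}$-comparability and hence a fixed order, while entries in different components differ by non-integers and so stay distinguishable as the unique representatives congruent to the corresponding entry of $[l]$); in any case, the \emph{multiset} of each row together with the congruence classes mod $\mathbb Z$ inherited from $[l]$ recovers each individual entry $l_{ki}$. Therefore $\chi_i=\chi_j$ forces $l_i=l_j$, i.e.\ $i=j$.

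\textbf{Extracting the summands.} With the $\chi_i$ pairwise distinct, pick for each pair $i\neq j$ an element $z\in\Gamma$ with $\chi_i(z)\neq\chi_j(z)$ (possible since the coefficients of the $B_k(t)$ separate the row data, as just argued). Then $V_{\mathcal C}([l])$, being a $\g$-module hence a $\Gamma$-module, is stable under $\Gamma$, and the operators $\prod_{j\neq i}\bigl(z_{ij}-\chi_j(z_{ij})\bigr)$ applied to $v=\sum_i a_i[l_i]$ kill every summand except the $i$-th and scale $[l_i]$ by a nonzero constant times $a_i$. Since $a_i\neq 0$, this shows $[l_i]\in V_{\mathcal C}([l])$ for each $i$, as claimed. (Equivalently, one observes that $V_{\mathcal C}([l])=\bigoplus_{\chi}V_{\mathcal C}([l])(\chi)$ as a $\Gamma$-module with one-dimensional weight spaces spanned by the $[l]$, and an element of a direct sum automatically has all its homogeneous components in the module.)

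\textbf{Main obstacle.} The only nontrivial point is the injectivity of $[l]\mapsto\chi_{[l]}$ on ${\mathcal B}_{\mathcal C}([l^0])$ --- equivalently, that no cancellation occurs between the numerator and denominator factors in~\eqref{action of gt subalg} and that the unordered row data plus the fixed congruence classes pin down the ordered entries. This is exactly where the definition of a tableau \emph{satisfying} $\mathcal C$ (the genericity condition, noncriticality of $\mathcal C_1,\mathcal C_2$, and condition~(3)) is used; granting these, the separation of characters and hence the corollary follow formally.
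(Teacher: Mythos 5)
Your argument is exactly the paper's: invoke Proposition~\ref{prop action of gt subalg} to see that $\Gamma$ acts diagonally on tableaux with distinct characters, and then separate the summands by acting with suitable elements of $\Gamma$. The paper leaves the injectivity of $[l]\mapsto\chi_{[l]}$ and the final extraction step implicit; you spell them out, but the route is the same.
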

\begin{proof}
It follows from Proposition \ref{prop action of gt subalg} that
 the Gelfand-Tsetlin subalgebra has different characters
 on tableaux $[l]\neq [l']$. Then  $[l_i]\in V_{\mathcal{C}}([l])$ for all $1\leq i\leq r$.
\end{proof}

\

\begin{theorem}\label{irreducibility} Let $\mathcal C$ be an admissible set of relations and $[l^0]$ a tableau satisfying $\mathcal C$.  
%
The quasi typical $\g$-module $V_{\mathcal C}([l^0])$  is irreducible if and only if 
$\mathcal C$ is the
maximal set of relations satisfied by $[l^0]$
and $l^0_{m+n,i}\neq l^0_{m+n, j}, 1\leq i\leq m<  j\leq m+n$.
\end{theorem}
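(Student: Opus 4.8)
The strategy is to prove the two directions separately, treating irreducibility via the Gelfand--Tsetlin structure. For the ``only if'' direction, suppose first that $\mathcal C$ is not the maximal set of relations satisfied by $[l^0]$; then there is an admissible set $\mathcal C'\supsetneq\mathcal C$ still satisfied by $[l^0]$ and, by Theorem~\ref{admissible}, $V_{\mathcal C'}([l^0])$ is a $\g$-module. Since $\mathcal B_{\mathcal C'}([l^0])\subsetneq\mathcal B_{\mathcal C}([l^0])$ and, by Corollary~\ref{corollary separation}, the span of any submodule is spanned by basis tableaux, $V_{\mathcal C'}([l^0])$ is a proper nonzero submodule of $V_{\mathcal C}([l^0])$, so the latter is reducible. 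Next, if $l^0_{m+n,i}=l^0_{m+n,j}$ for some $1\le i\le m<j\le m+n$, then the central character given by $B(t)$ in Proposition~\ref{prop action of gt subalg} is atypical (the factor $(1+tl_{m+n,i})$ in the numerator cancels a factor in the denominator), so $V_{\mathcal C}([l^0])$ sits over an atypical central character; one then exhibits a proper submodule, e.g.\ by applying the RR-method at the top row or by a direct highest/lowest weight argument as in the essentially typical case, to conclude reducibility.

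For the ``if'' direction, assume $\mathcal C$ is maximal and $l^0_{m+n,i}\neq l^0_{m+n,j}$ for $1\le i\le m<j\le m+n$. Let $W\subseteq V_{\mathcal C}([l^0])$ be a nonzero submodule. By Corollary~\ref{corollary separation}, $W$ is spanned by a subset of the basis tableaux $\mathcal B_{\mathcal C}([l^0])$, so it contains some tableau $[l]$. The key step is to show that, starting from any basis tableau $[l]$, repeated application of the operators $e_k,f_k$ (the Gelfand--Tsetlin formulas \eqref{action h}--\eqref{action f}) reaches every other basis tableau in $\mathcal B_{\mathcal C}([l^0])$, with nonzero coefficients; combined with Corollary~\ref{corollary separation} to separate the resulting linear combinations this forces $W=V_{\mathcal C}([l^0])$. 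This is where the hypotheses enter: the condition that $\mathcal C$ is maximal guarantees that whenever $[l]$ and $[l\pm\delta_{ki}]$ both lie in $\mathcal B_{\mathcal C}([l^0])$ the relevant rational coefficient in the formula is nonzero (no ``extra'' relation forces a numerator factor to vanish), and the top-row genericity condition $l^0_{m+n,i}\neq l^0_{m+n,j}$ across the even/odd divide guarantees the operators $e_m,f_m$ connecting the even and odd parts act with nonzero coefficients, so the graph of reachable tableaux is connected.

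The connectivity argument itself would be organized by induction on the rank $m+n$, reducing to the known $\gl(n)$ statement inside \cite{FRZ} (which handles the purely even ladders $k<m$ and $k>m$, i.e.\ the moves within $\mathfrak V_m$ and $\mathfrak V_n$) and treating the odd moves — the operators $e_m,f_m$ and the mixed $e_k,f_k$ for $m+1\le k\le m+n-1$, which change the $\theta_{ki}$'s — separately. Here one uses the relation-removal philosophy: by Theorem~\ref{rr} and Proposition~\ref{prop action of gt subalg} one may first pass to the generic situation (Lemma~\ref{generic}), where all the $\theta$'s range freely and the module looks like an essentially typical one, for which irreducibility under the maximality and typicality hypotheses is classical (cf.\ \cite{Palev1989a}); then one checks that imposing the admissible relations $\mathcal C$ does not disconnect the reachability graph, precisely because $\mathcal C$ is reduced and maximal.

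\textbf{Main obstacle.} The crux — and the most delicate bookkeeping — is proving the nonvanishing of the transition coefficients in the Gelfand--Tsetlin formulas at the boundary tableaux of $\mathcal B_{\mathcal C}([l^0])$: one must show that a coefficient vanishes \emph{only} when the target tableau fails to satisfy $\mathcal C$ (so the move is ``supposed'' to be zero), and never spuriously when both source and target are legal. This requires a careful case analysis of the numerators $\Pi(l_{k\pm1,j}-l_{ki})$ and the $\theta$-factors against the defining inequalities of an admissible reduced maximal set of relations, exactly the sort of analysis carried out in \cite{FRZ} for $\gl(n)$, now complicated by the odd row $k=m$ and the sign/cancellation phenomena of the super case.
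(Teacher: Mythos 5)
Your plan for the ``if'' direction matches the paper's argument: take any nonzero submodule $W$, use Corollary~\ref{corollary separation} to see that $W$ contains a basis tableau $[l]$, then show that every other tableau of $\mathcal B_{\mathcal C}([l^0])$ is reached from $[l]$ with a nonzero Gelfand--Tsetlin coefficient; the paper carries this out by a direct case analysis on moves $\pm\delta_{ki}$ according to whether $k<m$, $k=m$, $m<k$ and whether $i\leq m$ or $i>m$, using precisely the two hypotheses to rule out spurious vanishing. Your extra suggestions there --- passing to the generic situation via Lemma~\ref{generic} and inducting on the rank --- are not what the paper does and are not needed; indeed Lemma~\ref{generic} is a tool for proving the module structure (Theorem~\ref{admissible}), and trying to reduce irreducibility to generic tableaux would be problematic because the boundary behaviour of the coefficients at a specific $[l^0]$ is exactly what must be controlled.

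For the ``only if'' direction there is a genuine gap. You propose that if $\mathcal C$ is not maximal then there is an admissible $\mathcal C'\supsetneq\mathcal C$ satisfied by $[l^0]$ and that $V_{\mathcal C'}([l^0])$ is a proper nonzero submodule of $V_{\mathcal C}([l^0])$. Neither claim is justified: admissibility is a delicate global condition (reduced, noncritical, indecomposable components in $\mathfrak F$, the forbidden crossing pattern, the adjoining-pair condition requiring auxiliary relations), so adding one relation to $\mathcal C$ may destroy admissibility and may force adding others; and even when $\mathcal C'$ is admissible, $V_{\mathcal C'}([l^0])$ is a priori only a subspace of $V_{\mathcal C}([l^0])$ --- to be a submodule one must check that the formulas \eqref{action h}--\eqref{action f} applied to a $\mathcal C'$-tableau never produce, with nonzero coefficient, a tableau satisfying $\mathcal C$ but not $\mathcal C'$. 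The paper sidesteps both issues: it picks two tableaux $[l^1],[l^2]\in\mathcal B_{\mathcal C}([l^0])$ that disagree across the ``missing'' relation (respectively, with $\theta_{m+n-1,i}=0$ versus $1$ in the atypical case) and observes directly from the Gelfand--Tsetlin formulas that $[l^2]$ does not lie in $U(\g)[l^1]$, so the module is not cyclic hence not simple. Your handling of the top-row condition (``apply the RR-method at the top row or a direct highest/lowest weight argument'') is likewise too vague to count as a proof; you should instead produce, as the paper does, an explicit pair of basis tableaux with distinct $\theta_{m+n-1,i}$ and argue one-directional reachability.
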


\begin{proof}
Suppose $\mathcal{C}$ is the maximal set of relations satisfied by $[l^0]$
and $l^0_{m+n,i}\neq l^0_{m+n, j}, 1\leq i\leq m<  j\leq m+n$. First we show that $V_{\mathcal C}([l^0])$ is generated by $[l^0]$. 
For any tableau  $ [l]\in V_{\mathcal{C}}([l^0])$ there exist $\{(i_t,j_t,\epsilon_t)\}$, $\epsilon_t=\pm 1$, $1\leq t \leq s$, 
such that for any $r\leq s$, $[l^0+\sum_{t=1}^{r}\epsilon_t\delta_{i_t,j_t}]$ satisfies  
$\mathcal{C}$
and $[l]=[l^0+\sum_{t=1}^{s}\epsilon_t \delta_{i_t,j_t}]$.

\begin{itemize}
\item [(a)]
Suppose $[l^0]$ and $[l]=[l^0+\delta_{ki}]$  satisfy $\mathcal{C}$.
\begin{itemize}
\item [(i)]
If $1\leq i\leq k\leq m-1$, then $l^0_{k+1,j}\neq l^0_{ki}$ for any $1\leq j\leq k+1$. Therefore the coefficient of $[l]$ in
$E_{k,k+1 }[l^0]$ is nonzero by the maximality of $\mathcal{C}$. Hence 
$[l] \in \g[l^0]$ by  Corollary \ref{corollary separation}. 

\item [(ii)]
If $1\leq i\leq m, k=m$, then $\theta_{mi}(l^0)=1$, i.e. $l^0_{m+1,i}=l^0_{m,i}$. Since $[l]$ satisfies $\mathcal C$, we have $l_{mj}\neq l_{mi}$ for any
$1\leq j\leq m$. It implies that 
$l^0_{mj}- l^0_{mi}-1\neq 0$ for any
$1\leq j\leq m$.
Therefore, the coefficient of $[l]$ in
$E_{k,k+1 }[l^0]$ is nonzero. By Corollary \ref{corollary separation},
$[l] \in \g[l^0]$. 

\item [(iii)]
If $1\leq i\leq m, m+1\leq  k\leq m+n-1$,
then $\theta_{ki}(l^0)=1, \theta_{k-1,i}(l^0)=0$.
Since $[l]$ satisfies $\mathcal C$, $l^0_{kj}-l^0_{ki}-1\neq 0$ for any $j\neq i,1\leq j\leq m$. Therefore, the coefficient of $[l]$ in
$E_{k+1,k }[l^0]$ is nonzero. By Corollary \ref{corollary separation},
$[l] \in \g[l^0]$. 


\item [(iv)]
Suppose that $m+1\leq i\leq k\leq m+n-1$.
Since  $\mathcal{C}$ is the maximal set of relations satisfied by $[l^0]$, $l_{ki}\neq l_{k-1,j}$ for any $m+1\leq j\leq k-1$. Then the coefficient of $[l]$ in
$E_{k+1,k}[l^0]$ is nonzero. By Corollary \ref{corollary separation},
$[l] \in \g[l^0]$. 
\end{itemize}
\item[(b)]
Suppose $[l^0]$ and $[l]=[l^0-\delta_{ki}]$  satisfy $\mathcal{C}$.
If $1\leq i\leq m<   k\leq m+n-1$,
then $\theta_{ki}(l^0)=0$.
Since $[l]$ satisfies $\mathcal C$, $l^0_{kj}-l^0_{ki}+1\neq 0$ for any $1\leq j\leq m$. 

In the following we show that
$$
\frac{\Pi_{j=m+1}^{k+1}(l_{k+1,j}-l_{ki})\Pi_{j=m+1}^{k-1}(l_{k-1,j}-l_{ki}+1)}
  {\Pi_{j=m+1}^{k} (l_{kj}-l_{ki})(l_{kj}-l_{ki}+1)}\neq 0
$$

If $l_{k+1,j}=l_{k,i}$ for some $j$, then there exist tableau $[l']$
such that ${l'}_{kq}={l}_{kq}+z_{kq}$, where  $z_{kq}\in \mathbb Z$, $z_{m+n,q}=0$ for all $q$,
$[l']$ satisfies $\mathcal C_1$ and $\mathcal C_2$,
$\l'_{kq}-\l'_{k-1,q}\equiv\theta_{k-1,q}(l)\in\{0,1\}, 1\leq q\leq m, m+1\leq k\leq m+n$,
$l_{k+1,j}=l_{k+1,i}$ which contradicts with the definition of tableau satisfying set of relations.
Therefore the coefficient of $[l]$ in
$E_{k+1,k}[l^0]$ is nonzero. By Corollary \ref{corollary separation},
$[l] \in \g[l^0]$. 
 
Using the same argument, it can be shown that 
$[l] \in \g[l^0]$ in all other cases.
\end{itemize}

Repeating the argument $s$ times we conclude that $V_{\mathcal{C}}([l^0])$ is generated by $[l^0]$. 
Suppose  $W$ is a nonzero submodule of $V_{\mathcal{C}}([l^0])$. Then $W$ contains a tableau $[l]$ by Corollary \ref{corollary separation}.
Using the above argument one can show  that $[l]$ generates $V_{\mathcal{C}}([l])$ which is in fact equal to $V_{\mathcal{C}}([l^0])$.
This proves the irreducibility of $V_{\mathcal{C}}([l^0])$.

Conversely, suppose
$\mathcal{C}$ is not the maximal set of relations satisfied by $[l^0]$. Then
$l_{k+1, i}^0-l_{k, j}^0\in \mathbb{Z}$ for some indexes and there is no relation between $(k+1, i)$ and $(k, j)$.
So there exist tableaux $[l^1], [l^2]\in  V_{\mathcal C}([l^0])$ such that $l_{k+1, i}^1-l^1_{k, j}\in \mathbb{Z}_{\geq 0}$ and $l^2_{k , j}-l^2_{k+1, i}\in \mathbb{Z}_{>0}$.
By  the Gelfand-Tsetlin formulas one has that  $[l^2]$ is not in the submodule  of $V_{\mathcal{C}}[l^0])$ generated by $[l^1]$ and thus $V_{\mathcal{C}}([l^0])$ is not irreducible.

Suppose that $l_{m+n,i}=l_{m+n, j}$ for some $1\leq i\leq m<  j\leq m+n$. Then there exist tableaux $[l^1],[l^2]\in V_{\mathcal{C}}[l^0])$ such that
$\theta_{m+n-1,i}(l^1)=0$, i.e. $l^1_{m+n-1,i}=l^1_{m+n, j}$ and  $\theta_{m+n-1,i}(l^2)=1$. 
By  the Gelfand-Tsetlin formulas one has that  $[l^2]$ is not in the submodule  of $V_{\mathcal{C}}[l^0])$ generated by $[l^1]$ and thus $V_{\mathcal{C}}([l^0])$ is not irreducible.
\end{proof}
\begin{theorem}\label{Kac-module}
$V_{\mathcal{C}}([l^0])\simeq K(V_{\mathcal{C}}([l^0])^{\g_1})$
if $V_{\mathcal{C}}([l^0])$ is irreducible.
\end{theorem}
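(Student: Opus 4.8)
The plan is to combine the adjunction between $\mathbf K$ and $\mathbf R$ (Proposition \ref{prop-adjoint}) with the irreducibility hypothesis to produce a surjection $K(V_{\mathcal C}([l^0])^{\g_1})\to V_{\mathcal C}([l^0])$, and then to check that this surjection is injective by a character/dimension comparison over the Gelfand-Tsetlin subalgebra $\Gamma$, using Proposition \ref{prop action of gt subalg}. Write $V=V_{\mathcal C}([l^0])$ and $N=V^{\g_1}$.

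First I would identify $N$ explicitly. Since $\g_1$ acts by the raising operators $e_k$, $f_k$ coming through $\g_1=\langle E_{i,m+j}\rangle$, a tableau $[l]\in\mathcal B_{\mathcal C}([l^0])$ lies in $N$ precisely when the relevant formulas \eqref{action f}–\eqref{action f} annihilate it; concretely $N$ is spanned by those $[l]$ with $\theta_{m+n-1,i}(l)$ maximal, i.e.\ by the tableaux "at the top" with respect to the $\g_1$-action. One checks that $N$ is a $\g_0$-submodule (it is the space of $\g_1$-invariants and $[\g_0,\g_1]\subseteq\g_1$), and that $N$ is in fact a relation $\g_0=\gl(m)\oplus\gl(n)$-module: it is of the form $V_{\mathcal C_1}([l^{0}]_{\leq m})\otimes V_{\mathcal C_2}([l^{0}]_{\geq m+1})$ after restricting the entries, which is irreducible over $\g_0$ because $\mathcal C$ is the maximal set of relations (Theorem \ref{irreducibility} together with Theorem \ref{thm-gln}). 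Then by Frobenius reciprocity $\operatorname{Hom}_{\g}(K(N),V)=\operatorname{Hom}_{\g_0}(N,\mathbf R(V))=\operatorname{Hom}_{\g_0}(N,N)\ni\operatorname{id}$, giving a nonzero $\g$-homomorphism $\Phi:K(N)\to V$. Since $V$ is irreducible, $\Phi$ is surjective.

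Next I would show $\Phi$ is injective. As $\g_0$-modules, $K(N)\simeq\Lambda(\g_{-1})\otimes N$, and $\g_{-1}$ is spanned by the $mn$ odd lowering generators; so on each Gelfand-Tsetlin character the multiplicities of $K(N)$ are governed by tensoring $N$ with $\Lambda(\g_{-1})$. On the other side, Proposition \ref{prop action of gt subalg} describes the $\Gamma$-character of each basis tableau $[l]$ of $V$ by the values $(1+tl_{k1}),\dots$, and Corollary \ref{corollary separation} says distinct tableaux have distinct $\Gamma$-characters. The key combinatorial fact I would verify is that for every basis tableau $[l]$ of $V$ there is a unique way, up to the action of $\Lambda(\g_{-1})$, to reach $[l]$ from a tableau in $N$: precisely, each $[l]$ determines, via the $\theta_{k,i}(l)$ for $1\le i\le m$, $m+1\le k\le m+n$, a unique element of $\Lambda(\g_{-1})$ and a unique element of $\mathcal B(N)$ whose image under the corresponding lowering operator is a nonzero multiple of $[l]$. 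The condition $l^0_{m+n,i}\neq l^0_{m+n,j}$ for $i\le m<j$ — i.e.\ typicality of the central character — is exactly what guarantees these lowering operators have nonvanishing coefficients, so the map $\mathcal B(\Lambda(\g_{-1}))\times\mathcal B(N)\to\mathcal B(V)$ is a bijection on the nose. Hence $\dim K(N)(\chi)=\dim V(\chi)$ for every $\chi\in\hat\Gamma$ (both finite by Corollary \ref{fin-mult}), and the surjection $\Phi$ is an isomorphism.

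The main obstacle I expect is the injectivity step, and specifically the bijectivity of the lowering correspondence $\mathcal B(\Lambda(\g_{-1}))\times\mathcal B(N)\to\mathcal B(V)$. One must rule out any "collapse", where two different choices of lowering vector land on the same tableau, or where a tableau of $V$ is not reachable from $N$; controlling the nonvanishing of the rational coefficients in \eqref{action f} for the odd generators, and showing they are precisely nonzero under the typicality hypothesis $l^0_{m+n,i}\neq l^0_{m+n,j}$, is where the real work lies. An alternative, possibly cleaner route for this step is to invoke Theorem \ref{thm-equiv}: $V$ lies in $\mathcal M^{\xi}(\Gamma)$ for the typical central character $\xi$ attached to $[l^0]$, $N=\mathbf R_0(V)$, and $\mathbf K_0\mathbf R_0$ is the identity on $\mathcal M^{\xi}(\Gamma)$, so $K(N)=\mathbf K_0(N)\cong V$ immediately; one then only needs to check that $\xi$ is typical, which follows from $l^0_{m+n,i}\neq l^0_{m+n,j}$ via the Harish-Chandra description of $B(t)$ in \eqref{hc}.
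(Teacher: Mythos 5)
Your primary route has a gap in the injectivity step: you propose to count $\dim K(N)(\chi)$ via a bijection $\mathcal{B}(\Lambda(\g_{-1}))\times\mathcal{B}(N)\to\mathcal{B}(V)$, but the PBW monomials $x\otimes[l]$ spanning $K(N)$ are not $\Gamma$-eigenvectors, so this bijection by itself does not produce the $\Gamma$-multiplicities of $K(N)$. From surjectivity of $\Phi$ one gets $\dim K(N)(\chi)\geq\dim V(\chi)$, and you would still need the reverse inequality; you rightly flag this as the real work and do not carry it out. (A small slip in passing: $N=V^{\g_1}$ is the span of tableaux with $\theta_{ki}(l)=0$ for $1\leq i\leq m\leq k\leq m+n-1$, i.e.\ $\theta$ minimal, not maximal, since the odd raising operators carry a factor $\theta_{ki}$ and decrease $\theta$.)

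Your alternative route via Theorem \ref{thm-equiv}, however, is correct and is shorter than the paper's argument: irreducibility forces $l^0_{m+n,i}\neq l^0_{m+n,j}$ for $1\leq i\leq m<j\leq m+n$ by Theorem \ref{irreducibility}, so by Proposition \ref{prop action of gt subalg} and the Harish-Chandra description \eqref{hc} the central character $\xi$ of $V$ is typical, and then $V\cong\mathbf K_0\mathbf R_0(V)=K(V^{\g_1})$ is immediate from the Penkov--Serganova equivalence. The paper takes a genuinely different, self-contained route within the tableau framework: it first identifies $V^{\g_1}$ explicitly as the span $W$ of tableaux with all $\theta_{ki}=0$, bounding $V^{\g_1}$ above using the eigenvalues of $\sum_{i\leq m}E_{ii}$ together with irreducibility; it then constructs the explicit surjection $\pi:x\otimes[l]\mapsto x[l]$ from $K(W)$ and proves injectivity by a PBW reduction showing that a nonzero kernel would force the full product $z$ of the odd lowering generators to annihilate some $w_1\in W$, which is contradicted by exhibiting the extremal tableau $[l']$ with $l'_{ki}=l_{ki}-(m+n-k)$ inside $U(\g_0)zw_1$. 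The paper's proof avoids invoking the equivalence theorem and additionally yields the explicit description of $V^{\g_1}$; your alternative is cleaner but relies on Theorem \ref{thm-equiv} as a black box.
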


\begin{proof}
It follows immediately from the Gelfand-Tsetlin formulas that
\begin{equation}
W=\{[l]\in V_{\mathcal{C}}([l^0])| \
\theta_{ki}(l)=0, 1\leq i\leq m\leq k\leq m+n-1\}\subset
V_{\mathcal{C}}([l^0])^{\g_1}.
\end{equation}
Suppose there exists $v\notin W$ such that $v\in V_{\mathcal{C}}([l^0])^{\g_1}$. Since $\Gamma$ separates the tableaux by Proposition \ref{prop action of gt subalg}, we may assume that  
$v=[l^1]$. 
For any tableau $[l]\in W$ we have $(E_{11}+\cdots+E_{mm})[l]=(l_{m1}+\cdots+l_{mm})[l]$. Then
 $(E_{11}+\cdots+E_{mm})[l^1]=(l_{m1}+\cdots+l_{mm}-\alpha)[l^1]$
for some positive integer $\alpha$.
Since the elements of $U(\g_0)$ commute with $E_{11}+\cdots+E_{mm}$, then 
 all tableaux in  $U(\g)[l^1]$
have eigenvalues which  are smaller or equal  than the eigenvalue of  $[l^1]$.
 This contradicts  the irreducibility of $V_{\mathcal{C}}([l^0])$. Thus 
$W=V_{\mathcal{C}}([l^0])^{\g_1}$
and  $K(V_{\mathcal{C}}([l^0])^{\g_1})=K(W)\simeq U(\g_{-1})\otimes W$.

Define the linear map $\pi:K(W)\longrightarrow V_{\mathcal{C}}([l^0])$
by
\begin{equation}
\pi(x\otimes [l])=x[l],
\end{equation}
$x\in U(\g_{-1})$ and $[l]\in W$.
It is easy to see that $\pi$ is a $\g$-module homomorphism.
Since the image of $\pi$ is nonzero and $V_{\mathcal{C}}([l^0])$
is irreducible, then $\pi$ is surjective.

The algebra $U(\g_{-1})$ has a basis
$$ E_{m+1,1}^{\epsilon_1}E_{m+1,2}^{\epsilon_2}\cdots
E_{m+1,m}^{\epsilon_m}E_{m+2,1}^{\epsilon_{m+1}}
\cdots
E_{m+n,m}^{\epsilon_{mn}}, \quad \epsilon_{i}=0 \text{ or }1, 1\leq i\leq mn.$$
Suppose $\pi(v)=0$. Then $v$ can be written as
$$v=\sum_{i}x_i\otimes w_i,$$
where $x_i$ is a product of $E_{kj}$, $1\leq j\leq m< k\leq m+n$ 
with $x_i\neq x_j$ for $i\neq j$ and $w_i$ is  a nonzero vector in $W$.
Without loss of generality we assume that
$x_1$ has the smallest number of factors.
Let $y$ be the product of  all negative odd root elements $E_{kj}$'s which are not factors of $x_1$. Then we have 
$\pi(yv)=0$ and $yx=z\otimes w_1$,
where $z$ is the product of all negative odd root elements.
Thus $z w_1=0$ in  $V_{\mathcal{C}}([l^0])$.

The module $V_{\mathcal{C}}([l^0])$ is irreducible, thus
$\mathcal C=(\mathcal C_1,\mathcal C_2)$ is a maximal set of relations satisfied by $[l^0]$ and $W=U(\g_0)w_1 $ is irreducible $\g_0$-module.
Now we will show that $zw_1\neq 0$ which gives a contradiction.
Let  $[l']$ be the tableau in $V_{\mathcal{C}}([l^0])$ such that
$$l'_{ki}=
l_{ki}-(m+n-k), 1\leq i\leq m\leq k\leq m+n.$$
Then there exists an element $u\in U(\g)$
such that $uw_1=[l']$. The element $u$ can be written in the following form:
\begin{equation}
u=\sum_{i} u^{(i)}_{1}u^{(i)}_{0}u^{(i)}_{-1},
\end{equation}
where $u^{(i)}_{\pm 1}\in U(\g_{\pm 1})$, $u^{(i)}_{0}\in U(\g_{ 0})$. 
By the  formula  \eqref{action h} we have $(E_{11}+\cdots+E_{mm})[l']=(l_{m1}+\cdots+l_{mm}-mn)[l']$.
Since $u^{(i)}_{0}$ does not change the eigenvalue of the tableau and $[l']$ has the smallest eigenvalue, then $u^{(i)}_{1}=a_i$ and 
$u^{(i)}_{-1}=b_iz$, $a_i,b_i\in \mathbb C$ for any $i$.
Thus $u$ has the form $u_0 z, u_0\in U(\g_0)$.
Therefore, $[l']=uw_1=u_0zw_1$ which implies that $zw_1$ is nonzero vector in $V_{\mathcal{C}}([l^0])$ which is a contradiction.
We conclude that $\pi$ is injective and
$V_{\mathcal{C}}([l^0])\simeq K(V_{\mathcal{C}}([l^0])^{\g_1})$.

%
%
%
%
%
%
%
%
%
%

\end{proof}

\

\section{Quasi covariant modules}
In this section we extend our construction of quasi typical modules to include all covariant tensor modules constructed in \cite{Stoilova2010}.
We recall their construction.

\begin{theorem}\cite{Stoilova2010}
 The set of vectors $\xi_{\La}$ parameterized by all the Gelfand-Tsetlin  tableaux $\La$ satisfying the conditions
\begin{enumerate}
  \item $\la_{m+n,i}\in \mathrm{Z}_{\geq 0}$  \text{ are fixed and } $\la_{m+n,j}-\la_{m+n,j+1}\in \mathrm{Z}_{\geq 0}, j\neq m, 1\leq j\leq m+n-1$, $\la_{m+n,m}\geq\#\{i:\la_{m+n,i}>0,\ m+1\leq i\leq m+n\};$
  \item  $\la_{ki}-\la_{k-1,i}\equiv\theta_{k-1,i}\in\{0,1\}, 1\leq i\leq m;m+1\leq k\leq m+n$;

  \item $\la_{km}\geq\#\{i:\la_{ki}>0,\ m+1\leq i\leq k\}, m+1\leq k\leq m+n$;
  \item if $\la_{m+1,m}=0$, {\it then} $\theta_{mm}=0$;
  \item $\la_{ki}-\la_{k,i+1}\in \mathrm{Z}_{\geq 0}, 1\leq i\leq m-1;m+1\leq k\leq m+n-1$;

  \item  $\la_{k+1,i}-\la_{ki}\in \mathrm{Z}_{\geq 0}$ {\it and} $\la_{k,i}-\la_{k+1,i+1}\in \mathrm{Z}_{\geq 0},$
$1\leq i\leq k\leq m-1$ {\it or} $m+1\leq i\leq k\leq m+n-1$,
\end{enumerate} constitutes a basis of $L(\la)$, where $\la=(\la_{1},\la_{2},\ \ldots,\la_{m+n})$ and $\la_{i}=\la_{m+n,i}$, $i=1, \ldots, m+n$.
The action of generators of $\gl({m|n})$ on the irreducible covariant tensor module $L(\la)$ is given by
the formulas \eqref{action h}-\eqref{action f}.
\end{theorem}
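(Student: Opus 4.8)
The statement is recalled from \cite{Stoilova2010}; the plan is to re-derive it inside the relation-module framework of the previous sections, which both reproves it and sets up the passage to quasi covariant modules. First I would fix the tableau $[l^0]$ with $l^0_{ki}=\la_i-i+1$ for $1\le i\le m$ and $l^0_{kj}=-\la_j+j-2m$ for $m+1\le j\le k$ (with $\la_{ki}=\la_i$), i.e.\ the candidate highest weight tableau, and let $\mathcal C=(\mathcal C_1,\mathcal C_2)$ be the maximal set of relations it satisfies. The first task is purely combinatorial: to check that the six conditions listed in the theorem describe exactly the set $\mathcal B_{\mathcal C}([l^0])$ of tableaux obtained from $[l^0]$ by adding integer vectors with the top row fixed and preserving $\mathcal C$, together with the covariance constraints $\la_{km}\ge\#\{i:\la_{ki}>0,\ m+1\le i\le k\}$. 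Here the point is that for a covariant tensor highest weight the dominance inequalities in the $\gl(m)$- and $\gl(n)$-blocks are genuine, so $\mathcal C_1$ and $\mathcal C_2$ are the ``standard'' (essentially typical) sets of relations, while the mixed $\theta$-entries remain free in $\{0,1\}$; the genuinely new features are the column-length conditions (3) and (4), which I would show are automatically preserved by every admissible step.

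Next I would establish that the formulas \eqref{action h}--\eqref{action f} make the span $V_{\mathcal C}([l^0])$ into a $\gl(m|n)$-module. This is a covariant analogue of Theorem \ref{admissible}, and I would prove it by the same mechanism: reduce first to generic tableaux by a version of Lemma \ref{generic}, then check that every generator of the kernel $K$ of $\epsilon:\hat{\mathfrak g}\to\gl(m|n)$ annihilates a generic covariant tableau. For each relation in $K$ and each target tableau one replaces the relevant entries of $[l^0]$ by the corresponding entries of a variable tableau $[v]$ subject to a small admissible set of relations $\mathcal C'$ obtained from the standard one by the RR-method (Theorem \ref{rr}), uses that Theorem \ref{admissible} holds for $\mathcal C'$, and then specializes $[v]$ to $[l^0]$, invoking regularity of the coefficient rational functions at the specialization point. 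The coefficients of tableaux that fail condition (3) or (4) after the step must be shown to vanish at the specialization; this is the covariant counterpart of the cancellations in cases (viii)--(x) of the proof of Theorem \ref{admissible}, and it is precisely where the column-length conditions enter.

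It then remains to identify $V_{\mathcal C}([l^0])$ with the irreducible module $L(\la)$. By \eqref{action h} the tableau $[l^0]$ is a weight vector of weight $\la$, and from the Gelfand--Tsetlin formulas $e_k[l^0]=0$ for all $k$ (every raising move either leaves $\mathcal B_{\mathcal C}([l^0])$ or has vanishing coefficient because $\mathcal C$ is maximal), so $[l^0]$ is a highest weight vector. Cyclicity and irreducibility are then proved exactly as in Theorem \ref{irreducibility}: Proposition \ref{prop action of gt subalg} and Corollary \ref{corollary separation} show the Gelfand--Tsetlin subalgebra separates tableaux, so any nonzero submodule contains a tableau, and a case analysis on the four types of admissible moves shows --- using maximality of $\mathcal C$ together with conditions (3)--(4) to guarantee nonvanishing of the relevant structure constants --- that every tableau generates the whole space. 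Hence $V_{\mathcal C}([l^0])$ is an irreducible highest weight module of highest weight $\la$, so there is a surjection $L(\la)\twoheadrightarrow V_{\mathcal C}([l^0])$; it is an isomorphism because $\#\mathcal B_{\mathcal C}([l^0])=\dim L(\la)$, the latter being finite and given by the known character formula for covariant tensor modules.

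The main obstacle is the second step --- verifying all the defining relations, and especially the three- and four-fold Serre relations around $e_m$ --- at tableaux that are simultaneously critical (equal entries in a row) and at the boundary of the covariance region. As in Theorem \ref{admissible} this cannot be done by a direct substitution into the generic formulas; for each relation and each offending target tableau one must produce the correct small admissible set $\mathcal C'$ and variable tableau $[v]$ so that regularity of the coefficients at the specialization forces them to vanish, and check that conditions (3)--(4) are exactly what make the relevant $\mathcal C'$ admissible and keep the resulting set of tableaux consistent.
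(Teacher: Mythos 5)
The paper does not prove this statement. It is quoted verbatim from \cite{Stoilova2010} (``We recall their construction'') and then immediately generalized to quasi covariant modules in Theorem \ref{quasi covariant}, whose proof is itself declared ``fully analogous to the proof of Theorem \ref{admissible} and Theorem \ref{irreducibility}'' and omitted. Your proposal to re-derive the Stoilova--Van der Jeugt theorem inside the relation-module machinery is therefore in the same spirit as the paper's Theorem \ref{quasi covariant}, specialized to the highest-weight covariant case, and the outline of steps one and three (identify $[l^0]$, maximal relations, highest-weight vector, separate tableaux via Proposition \ref{prop action of gt subalg} and Corollary \ref{corollary separation}, cyclicity by the case analysis of Theorem \ref{irreducibility}) is sound.

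There is, however, a concrete gap in step two. You propose to take $\mathcal C_1,\mathcal C_2$ to be the ``standard (essentially typical) sets of relations'' and to treat the column-length conditions (3)--(4) as side constraints that are ``automatically preserved,'' with vanishing of offending coefficients to be checked by the method of Theorem \ref{admissible}. But covariant tensor modules are generically \emph{atypical}: the theorem's conditions allow $l_{ki}=l_{kj}$ with $i\le m<j$, which directly violates condition (3) in the definition of a tableau satisfying $\mathcal C$ in the quasi typical framework. So the $\mathcal B_{\mathcal C}([l^0])$ of the quasi typical construction does not reproduce the Stoilova--Van der Jeugt basis, and Theorem \ref{admissible} cannot be applied as is. The paper resolves this by modifying both the relation set and the tableau conditions: a \emph{covariant admissible} set must contain the chain $\mathcal C'=\{((i,i);(i+1,i+1))\mid p\le i\le m+n-1\}\subset\mathcal C_2$ and satisfy a monotonicity constraint on its component, and the $\mathcal C$-\emph{covariant} tableaux replace $l_{ki}-l_{kj}\neq 0$ across the $\gl(m)/\gl(n)$ boundary by $l_{m+n,n}-l_{m+n,m+n}\in\mathbb Z$ plus non-integrality only for vertices outside the component of $\mathcal C'$, together with the explicit column-length inequalities and the boundary condition $\theta_{mm}=0$. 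Your plan does not build this structure into $\mathcal C$, so the RR-reduction in your second step has nothing admissible to reduce to at the atypical boundary, and the Serre relations around $e_m,f_m$ that your last paragraph flags cannot be handled by specializing from a variable tableau satisfying an ordinary admissible $\mathcal C'$. Finally, the dimension count against the covariant character formula at the end is unnecessary: once $V_{\mathcal C}([l^0])$ is shown to be an irreducible highest-weight module of highest weight $\lambda$, it is $L(\lambda)$ by uniqueness of the irreducible quotient of $K(\lambda)$.
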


Let $\mathcal C=(\mathcal{C}_1,\mathcal{C}_2)$ be an admissible set of relations. We will say  that $\mathcal C$ is a \emph{covariant admissible} set of relations if it satisfies the following conditions:
\begin{enumerate}
\item $\mathcal{C}_2$ has a subset 
$$\mathcal C'=\{((i,i);(i+1,i+1))|p\leq i\leq m+n-1\},$$
for a fixed $p$ with $m+1\leq p\leq m+n$
and there is no $(p-1,i)$ such that $((p,p);(p-1,i))\in \mathcal C_2$ or $((p-1,i);(p,p))\in \mathcal C_2)$.
\item If $(k,i)$ is in the component of $\mathcal C'$ then
 $(k,j)$ is in the same component for $i\leq j\leq k$. 
\end{enumerate}

  
\begin{definition}
A tableau $[l]$ of $\gl(m|n)$ is  called \emph{$\mathcal{C}$-covariant} if:
\begin{enumerate}
\item $[l]$ satisfies $\mathcal C_1$ and $\mathcal C_2$;
\item  $\l_{ki}-\l_{k-1,i}\equiv\theta_{k-1,i}(l)\in\{0,1\}, 1\leq i\leq m, m+1\leq k\leq m+n$;
\item  $l_{m+n,n}-l_{m+n,m+n}\in \mathbb Z$, and $\l_{ki}- \l_{kj}\notin \mathbb Z$ for $m+1\leq k\leq m+n$, $1\leq i\leq m<j \leq k$ with 
$(k,j)$ not in the component of $\mathcal C'$.

  \item If $l_{m+n,n}-l_{m+n,m+n}\leq 0$ then for $m+1\leq k\leq m+n$
\begin{itemize}  
\item $l_{km}\geq l_{m+n,m+n}-m-n+p_k$, if there exists $p_k$ such that $p_k$ is the  smallest number among $\{m+1, m+2,\ldots, k\}$ 
with $l_{k,p_k}-l_{m+n,m+n}=p_k-m-n$.
\item $l_{km}\geq l_{m+n,m+n}-m-n+k+1$ if there exists no such $p_k$.
\end{itemize}
   \item If $l_{m+1,m}-l_{m+n,m+n}=1-n$ then $\theta_{mm}=0$.

\end{enumerate}
\end{definition}

Let 
$[l^0]=(l^0_{ij})$ be a $\mathcal C$-covariant tableau and ${\mathcal B}_{\mathcal{C}}([l^0])$ the set of all possible  $\mathcal C$-covariant tableaux $[l]=(l_{ij})$
satisfying 
$${l}_{ki}=l^0_{ki}+z_{ki},  \,\, z_{ki}\in \mathbb Z, \,\, 1\leq k\leq m+n-1, \,\, 1\leq i\leq k.$$

Denote by $V_{\mathcal C}([l^0])$  the vector space spanned by ${\mathcal B}_{\mathcal{C}}([l^0])$.
We have the following extension of Theorem \ref{admissible} and Theorem \ref{irreducibility} for  covariant admissible set of relations.

\

\begin{theorem}\label{quasi covariant}
Let $\mathcal{C}$ be a covariant admissible set of relations. 
\begin{itemize}
\item[(i)] The
the formulas \eqref{action h}-\eqref{action f} define a $\gl(m|n)$-module structure on $V_{\mathcal C}([l^0])$ for any $\mathcal{C}$-covariant tableau
$[l^0]$. 
\item[(ii)]
The action of $B(t)$ on any basis tableau $[l]$ of  $V_{\mathcal C}([l^0])$ is given by
\begin{equation}\label{action of gt subalg}
B_k(t)[l]=
\left\{ 
\begin{aligned}
&(1+tl_{k1})\cdots(1+tl_{kk})[l],      &\text{if } 1\leq k\leq m,\\
&\frac{(1+tl_{k1})\cdots(1+tl_{km})}
{(1+tl_{k,m+1})\cdots(1+tl_{k,k})} [l],    &\text{if } m+1\leq k\leq m+n.\\
\end{aligned}
 \right.
\end{equation}
\item[(iii)] The module
$V_{\mathcal C}([l^0])$ is irreducible if and only if
$\mathcal C$ is the
maximal set of relations satisfied by $[l^0]$.
\end{itemize}
\end{theorem}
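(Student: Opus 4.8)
The plan is to prove the three assertions in parallel with the quasi typical case, using Theorem~\ref{admissible}, Proposition~\ref{prop action of gt subalg} and Theorem~\ref{irreducibility} as templates. The one genuinely new ingredient is a \emph{covariant} version of the relation removal method: starting from a covariant admissible $\mathcal C=(\mathcal C_1,\mathcal C_2)$ one is only allowed to delete maximal or minimal vertices that do \emph{not} lie in the component of the distinguished chain $\mathcal C'$, so that $\mathcal C'$ is preserved at every step. The ``standard'' covariant set of relations --- the one satisfied by the Gelfand-Tsetlin tableaux of a finite dimensional covariant tensor module of \cite{Stoilova2010} --- serves as the base case, and every covariant admissible $\mathcal C$ is shown to be obtainable from it (or from a generic integral shift of it) by such restricted RR-moves. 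For part (ii) I would run the argument of \cite{FRZ}, Lemma~5.1: the eigenvalue of $B_k(t)$ on a basis tableau is unchanged under an RR-move, so one reduces to the minimal covariant admissible set $(\emptyset,\mathcal C')$, for which the formula in (ii) is checked directly from the Gelfand-Tsetlin formulas (equivalently, from \eqref{hc} applied to the standard covariant module).

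For (i), as in the proof of Theorem~\ref{admissible} it suffices to show that every generator $g$ of the kernel $K$ of $\epsilon\colon\hat{\mathfrak g}\to\gl(m|n)$ annihilates every basis tableau of $V_{\mathcal C}([l^0])$. First I would prove the covariant analogue of Theorem~\ref{rr}: if the module identities hold for $\mathcal C$ and $\mathcal C''$ is obtained from it by a restricted RR-move, then they hold for $\mathcal C''$; the proof is the one given for Theorem~\ref{rr}, since pushing an uninvolved, now-maximal (or minimal) entry to $\pm\infty$ does not interact with $\mathcal C'$. With the standard covariant set as base case, this propagates the module axioms to every covariant admissible $\mathcal C$, once the generic reduction is also carried over. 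Here Lemma~\ref{generic} is replaced by: it is enough to verify $g[l]=0$ for tableaux with $l_{ki}-l_{kj}\notin\mathbb Z$ for all $1\le i\le m<j\le k$ with $(k,j)$ \emph{not} in the component of $\mathcal C'$; such tableaux arise from $[l^0]$ by adding $x\mathbf 1_n$ for infinitely many $x\in\mathbb C$ (with $\mathbf 1_n$ having entries $1$ in the positions $(k,j)$, $m+1\le j\le k\le m+n$), which leaves every equality forced by $\mathcal C'$ intact because it shifts both sides of each relation of $\mathcal C'$ by the same amount. On these tableaux the case-by-case check over the generators $[e_i,f_j]$, $[e_i,[e_i,e_{i\pm1}]]$, $[e_m,e_m]$, etc.\ is word for word the one in the proof of Theorem~\ref{admissible}; the only extra cases are relations internal to the $\mathcal C'$-component, and there one takes the auxiliary set to contain the relevant piece of $\mathcal C'$ and argues exactly as in case (b) of step (iv), using that the finite dimensional covariant tensor module already satisfies the identity.

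For (iii), the ``if'' direction follows the proof of Theorem~\ref{irreducibility}. Assuming $\mathcal C$ is the maximal set of relations satisfied by $[l^0]$, the same one-step-move analysis shows that any basis tableau of $V_{\mathcal C}([l^0])$ generates the whole module: one uses Corollary~\ref{corollary separation} to split off individual tableaux and checks that every relevant coefficient of $E_{k,k+1}$ or $E_{k+1,k}$ in \eqref{action h}--\eqref{action f} is nonzero. For moves $[l]\mapsto[l\pm\delta_{ki}]$ not meeting the $\mathcal C'$-component the argument is verbatim that of Theorem~\ref{irreducibility}: maximality forces $l_{ki}-l_{k\pm1,j}\neq 0$ whenever no relation forces equality, and the $\mathcal C$-covariant condition $\l_{ki}-\l_{kj}\notin\mathbb Z$ for $(k,j)$ outside the component plays the role of the typicality hypothesis $l^0_{m+n,i}\neq l^0_{m+n,j}$ in Theorem~\ref{irreducibility} --- which is why that hypothesis is now unnecessary. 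For moves inside or adjacent to the $\mathcal C'$-component the non-vanishing of the coefficients is exactly what the covariant inequalities in conditions (4)--(5) of the definition of a $\mathcal C$-covariant tableau guarantee, and here one imports the irreducibility of the finite dimensional covariant tensor module together with the ``smallest eigenvalue of $E_{11}+\cdots+E_{mm}$'' argument from the proof of Theorem~\ref{Kac-module}. The converse is identical to Theorem~\ref{irreducibility}: if $\mathcal C$ omits a relation with $l^0_{k+1,i}-l^0_{k,j}\in\mathbb Z$, then $V_{\mathcal C}([l^0])$ contains tableaux $[l^1],[l^2]$ with $l^1_{k+1,i}-l^1_{k,j}\ge 0$ and $l^2_{k,j}-l^2_{k+1,i}>0$, and the Gelfand-Tsetlin formulas show $[l^2]\notin U(\g)[l^1]$.

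The main obstacle I expect is the bookkeeping around the distinguished chain $\mathcal C'$: setting up the covariant RR-method so that $\mathcal C'$ is never disturbed, verifying that the standard covariant set is reachable from an arbitrary covariant admissible $\mathcal C$ by such moves, and --- in part (iii) --- checking by hand that the covariant inequalities built into the definition of a $\mathcal C$-covariant tableau are precisely strong enough to keep the Gelfand-Tsetlin coefficients nonzero for the moves touching the $\mathcal C'$-component, where typicality is deliberately violated.
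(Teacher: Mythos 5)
The paper gives no actual proof of Theorem~\ref{quasi covariant}; it only asserts the argument is ``fully analogous'' to Theorems~\ref{admissible} and \ref{irreducibility}. Your proposal is precisely a fleshing-out of that analogy, and the main structural decisions you make --- a restricted relation-removal method that never touches the distinguished chain $\mathcal C'$, the finite dimensional covariant tensor module as the base case, and the observation that the hypothesis $l^0_{m+n,i}\neq l^0_{m+n,j}$ of Theorem~\ref{irreducibility} becomes automatic (for $(k,j)$ outside the $\mathcal C'$-component) and is replaced by the covariant inequalities (for $(k,j)$ inside it) --- all match what the paper must have in mind.

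There is one concrete spot where the proposal, as written, would fail. You propose to reduce to the ``weakly generic'' tableaux by adding $x\mathbf 1_n$, with $\mathbf 1_n$ having entries $1$ at \emph{all} positions $(k,j)$, $m+1\le j\le k\le m+n$, and you justify this by saying the shift ``leaves every equality forced by $\mathcal C'$ intact because it shifts both sides of each relation of $\mathcal C'$ by the same amount.'' This takes care of the relations inside $\mathcal C_2$, but it does not take care of condition~(3) of the definition of a $\mathcal C$-covariant tableau, which requires $l_{m+n,n}-l_{m+n,m+n}\in\mathbb Z$. The position $(m+n,n)$ (respectively $(m+n,m)$) lies in the even block and is therefore \emph{not} shifted by $\mathbf 1_n$, while $(m+n,m+n)$ is, so $[l^0+x\mathbf 1_n]$ is not $\mathcal C$-covariant for any non-integer $x$ and your evaluation argument no longer takes place inside the class of tableaux being studied. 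The fix is to use a shift vector supported only on the odd positions that lie \emph{outside} the $\mathcal C'$-component: by condition (2) of covariant admissibility each row's $\mathcal C'$-positions form a right tail, there are no $\mathcal C_2$-edges between the $\mathcal C'$-component and its complement, and both anchors $l_{m+n,m}$ (even) and $l_{m+n,m+n}$ (in $\mathcal C'$) are fixed, so this shift preserves conditions~(1)--(5). You would still need to verify that the integer-threshold inequalities in conditions~(4)--(5) are stable under such a shift, since the quantities $l_{k,p_k}-l_{m+n,m+n}$ there compare a possibly non-$\mathcal C'$ entry with a $\mathcal C'$ entry; this is exactly the kind of bookkeeping you flag at the end as the expected difficulty, so it is worth doing explicitly rather than by analogy.
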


\

The proof of this theorem is fully analogous to the proof of Theorem \ref{admissible} and Theorem \ref{irreducibility} so we leave the details out.   The module $V_{\mathcal C}([l^0])$ in Theorem \ref{quasi covariant} will be called \emph{quasi covariant} $\g$-module.
Note that all covariant tensor finite dimensional modules are quasi covariant. Hence, we have a generalization of the class of finite dimensional covariant tensor modules.

\
%

%

\begin{thebibliography}{ABC}
\bibitem[BG]{BG} J. N. Benstein, S. I. Gelfand, Tensor products of finite and infinite dimensional representations of semisimple Lie algebras, Compositio Math. 41 (1980), 245-285.

\bibitem[BZV]{BZV}M. Bershadsky, S. Zhukov, A. Vaintrob, $psl(n|n)$ sigma model as a conformal field theory, Nuclear Phys. B, 559 (1999), no.1-2, 205--234.

\bibitem[CM]{CM}C.-W. Chen, V. Mazorchuk, Simple supermodules over Lie superalgebras. Preprint, arXiv.1801.00654

\bibitem[DFO]{DFO} Y. Drozd, S. Ovsienko, V. Futorny,  Harish-Chandra subalgebras and Gelfand-Zetlin modules, {Math. and Phys. Sci.} {424} (1994), 72--89.

\bibitem[EMV]{EMV} N.Early, V. Mazorchuk, E. Vishnyakova, Canonical Gelfand-Zeitlin
modules over orthogonal Gelfand-Zeitlin algebras,  arXiv:1709.01553.


\bibitem[FGRZ]{FGRZ} V.Futorny, D.Grantcharov, L.E.Ramirez, P.Zadunaisky, Gelfand-Tsetlin theory for rational Galois algebras, Preprint, arxiv.1801.09316, 2018.

\bibitem[FRZ]{FRZ}
V. Futorny, L. E. Ramirez, J. Zhang, Combinatorial
  construction of Gelfand-Tsetlin modules for $\gl_n$,
  Advances in Mathematics 343 (2019): 681-711.

\bibitem[FO]{FO} V.Futorny, S.Ovsienko, Fibers of characters in Gelfand-Tsetlin categories,
 Trans. Amer. Math. Soc.,
 366 (2014),
 4173--4208.


\bibitem[GG]{GG}  I. Gelfand, M. Graev, ``Finite-dimensional simple representations of the unitary and complete linear group and special functions associated with them." Izvestiya Rossiiskoi Akademii Nauk. Seriya Matematicheskaya 29.6 (1965): 1329-1356.

\bibitem[GT]{GT} I. Gelfand, M. Tsetlin, Finite-dimensional representations of the group of unimodular matrices, {Doklady Akad. Nauk SSSR (N.s.)} {71} (1950), 825--828.

\bibitem[G]{G} M. Gorelik, Strongly typical representations of the basic classical superalgebras. Journal of AMS 15,1(2002),167-184.
  
\bibitem[H]{H} J.Hartwig, Principal Galois Orders and Gelfand-Zeitlin modules, 2017, Preprint, arxiv.1710.04186.

\bibitem[K1]{K1} B. Kostant, On the Tensor Product of a Finite and an Infinite Dimensional Representation. Journ. Funct. Anal. v. 20 (1975) 257-285.

\bibitem[K2]{K2} V. G. Kac, Characters of typical representations of Lie superalgebras, Comm. Alg. 5 (1977)
p.889--997.


\bibitem[KTWWY]{KTWWY} J.Kamnitzer, P.Tingley, B.Webster, A.Weekes, O.Yacobi, 
 On category $\mathcal O$ for affine Grassmannian slices and categorified tensor products,
 2018, Preprint, arxiv.1806.07519.

\bibitem[KW1]{KW1}  B. Kostant,  N. Wallach, Gelfand-Zeitlin theory from the perspective of
classical mechanics I, In Studies in Lie Theory Dedicated
to A. Joseph on his Sixtieth Birthday, {Progress in
Mathematics},  {243} (2006),   319--364.


\bibitem[KW2]{KW2}  B. Kostant,  N. Wallach, Gelfand-Zeitlin theory from the perspective of
classical mechanics II. In The Unity of Mathematics In
Honor of the Ninetieth Birthday of I. M. Gelfand, {Progress
in Mathematics},  {244} (2006), 387--420.


\bibitem[LP]{LP}  F. Lemire, J. Patera, ``Formal analytic continuation of Gel'fand's finite dimensional representations of $\mathfrak{gl}(n, \mathbb{C})$, {Journal of Mathematical Physics.} {20} (1979), 820--829.

\bibitem[M1]{M1} A. Molev, Factorial supersymmetric Schur functions and super Capelli identities.
American Mathematical Translations (1998): 109-138.


\bibitem[M2]{M2} A. Molev, Combinatorial bases for covariant representations of the Lie superalgebra
$\gl(m|n)$,
Bull. Inst. Math. Acad. Sin.(N.S.) \textbf{6} (2011), 415--462.

\bibitem[Naz]{Nazarov1991}
M. L. Nazarov, Quantum Berezinian and the classical capelli identity,
  Letters in Mathematical Physics \textbf{21} (1991), no.~2, 123--131.

\bibitem[O]{O} S. Ovsienko, "Finiteness statements for Gelfand-Zetlin modules." In Third International Algebraic Conference in the Ukraine, pp. 323-338. Natsional. Akad. Nauk Ukrainy, Inst. Mat. Kiev, 2002.

\bibitem[Pal1]{Palev1989a}
T. D. Palev,  Essentially typical representations of the Lie superalgebra $\gl(n|m)$ in a Gelfand-Zetlin basis , Funkt. Anal. Prilozh. 23, No. 2 (1989), 69--70  (in Russian); Funct. Anal. Appl. 23 (1989), 141--142 (English translation).

\bibitem[Pal2]{Palev1989b}
T. D. Palev, Irreducible finite-dimensional representations of the Lie superalgebra $\gl(n/1)$ in a Gelfand-Zetlin basis, J. Math. Phys. 30 (1989), 1433--1442. 

\bibitem[PS]{PS} I. Penkov, V. Serganova, Representation of classical Lie superalgebras of type I, Indag.
Mathem., N.S. 3 (4) (1992), 419--466.

\bibitem[Se]{Se} A. N. Sergeev, Invariant polynomial functions on Lie superalgebras, C.R.Acad. Bulgare Sci.
35 (1982), 573--576.

\bibitem[SV]{Stoilova2010}
N.~I. Stoilova and J.~{Van der Jeugt}, Gel'fand-Zetlin basis and
  Clebsch-Gordan coefficients for covariant representations of the Lie
  superalgebra $\gl(m|n)$, Journal of Mathematical Physics \textbf{51} (2010),
  no.~9, 1--15.
  
  \bibitem[W]{W} B.Webster, Gelfand-Tsetlin modules in the Coulomb context, arXiv:1904.05415v1.
\end{thebibliography}

\section*{Acknowledgements}
V.F. is supported  by  CNPq  (200783/2018-1) and by  Fapesp (2018/23690-6).
J. Z. was supported by  Fapesp  (2015/05927-0). V.S. was supported by NSF grant 1701532.
J. Z. acknowledges the hospitality and support of Max Planck Institute for
Mathematics in Bonn. All authors are grateful to  the International Center for Mathematics of SUSTech for support and hospitality during their visit when the paper was completed.

\bibliographystyle{mrl}

\end{document}